\journal{Automatica}
\newtheorem{theorem}{Theorem}
\newtheorem{corollary}{Corollary}
\newdefinition{lemma}{Lemma}
\newdefinition{assumption}{Assumption}
\newdefinition{remark}{Remark}
\newdefinition{definition}{Definition}
\newtheorem{proposition}{Proposition}
\begin{document}
\begin{frontmatter}
	
	\title{Distributed Riemannian Stochastic Gradient Tracking Algorithm on the Stiefel Manifold\tnoteref{footnoteinfo}} 
	
	\tnotetext[footnoteinfo]{This paper was not presented at any IFAC
		meeting. Zhao and Lei are with the Department of Control Science and Engineering, Tongji University, Shanghai, 201804, China; Lei is  also with the Shanghai Research Institute for Intelligent Autonomous Systems, Tongji University, Shanghai, 201210, China. Wang is with School of Electrical Engineering and Telecommunications, University of New South Wales, Sydney, NSW 2052, Australia.}
	
		\author{Jishu Zhao}\ead{2111115@tongji.edu.cn}  
		\author{Xi Wang}\ead{xi.wang14@unsw.edu.au}
		\author{Jinlong Lei}\ead{ leijinlong@tongji.edu.cn}

	\begin{keyword}                           
		Riemannian optimization, Distributed stochastic optimization, Gradient tracking, Variable sampling scheme         
	\end{keyword}                             
	\begin{abstract}                          
			This paper focus on investigating the distributed Riemannian stochastic optimization problem on the Stiefel manifold for multi-agent systems, where all the agents work collaboratively to optimize a function modeled by the average of their expectation-valued local costs. Each agent only processes its own local cost function and communicate with neighboring agents to achieve optimal results while ensuring consensus. Since the local Riemannian gradient in stochastic regimes cannot be directly calculated, we will estimate the gradient by the average of a variable number of sampled gradient, which however brings about noise to the system. We then propose a distributed Riemannian stochastic optimization algorithm on the Stiefel manifold  by combining the  variable sample size gradient approximation method with the gradient tracking dynamic. It is worth noticing that the suitably chosen  increasing sample size plays an important role in improving the algorithm efficiency, as it reduces the noise variance. In an expectation-valued sense, the iterates of all agents are proved to converge to a stationary point (or neighborhood) with fixed step sizes.  We further establish the convergence rate of the iterates for the cases when the sample size is exponentially increasing, polynomial increasing, or a constant, respectively. Finally, numerical experiments are implemented to demonstrate the theoretical results.
	\end{abstract}
\end{frontmatter}
	
	\section{Introduction}\label{sec-1}
	Distributed optimization has attracted much research attention in the past decades, motivated by  the need to solve optimization problems over large-scale datasets or complex multi-agent systems.
	It is worth noticing that some of the problems have particular manifold constraints, e.g. decentralized spectral analysis \cite{huang2020communication, kempe2004decentralized}, dictionary learning \cite{raja2015cloud}, and deep neural networks with orthogonal constraint \cite{vorontsov2017orthogonality}. Therefore, solving distributed optimization problems on Riemannian manifold has also received significant research attention over the past few years \cite{pmlr-v139-chen21g, deng2023decentralized, wang2022decentralized, doi:10.1137/20M1321000, chen2024decentralized, wang2022variance}.
	
	Let $\text{St}(n,r)$ denote the Stiefel manifold, and consider the function $f: \text{St}(n,r) \mapsto \mathbb{R}$. We focus on the following Riemannian stochastic optimization problem over connected networks:
	\begin{equation}\label{p-1}
	\begin{aligned}
	\min\ &f(X) := \frac{1}{N}\sum_{i=1}^N f_i(X),\\
	s.t.\ & X \in \mathcal{M} \triangleq \text{St}(n,r):=\{X \in \mathbb{R}^{n \times r}:X^\top X = I_r\},
	\end{aligned}
	\end{equation}
	where $I_r$ is the $r\times r$ identity matrix and the local cost function privately known by agent $i$ is an expectation-valued function defined as $f_i(X):=\mathbb{E}_{\xi_i} F_i(X,\xi_i)$. Though each agent merely has its local information, it can interact with its neighbors through a connected graph $\mathcal{G}=(\mathcal{N},\mathcal{E})$ to collaboratively solve the problem \eqref{p-1}.
	Here, $\mathcal{N}:=\{1, \dots , N\}$ denotes the index set including all agents,  and $\mathcal{E} \subset \mathcal{N} \times \mathcal{N}$ denotes the set of all communication links. If $(i,j)\in \mathcal{E}$, then node $i$ can send  information to node $j$.
	Problem \eqref{p-1} extends the distributed stochastic optimization problem in Euclidean space to the Stiefel manifold, which however is non-convex in Euclidean space. Next, we initiate a review of prior work.
	
	\subsection{Related Works}
	Stochastic distributed optimization has particular research interests among various distributed optimization frameworks, where the local cost functions are the expectation of functions with stochastic variables. Since the expectation-valued function has no closed-form, it could be prohibitive or costly to compute the exact gradient in big-data scenarios. Stochastic approximation (SA), originated from Robbins and Monro in the 1950s \cite{robbins1951stochastic}, is a commonly used method in algorithm design. The main idea of SA is to utilize a stochastic gradient to approximate the exact gradient. As such, stochastic gradient descent (SGD) has received extensive attention since it is practical and performs well in large-scale learning \cite{bottou2018optimization}. In recent years, distributed stochastic optimization in the Euclidean space has been widely studied where all the agents find the optimal solution cooperatively, such as \cite{nemirovski2009robust, srivastava2011distributed, pu2021distributed, sayed2014adaptation, lei2022distributed, alghunaim2019distributed}.
	
	However, it is unable to solve problem \eqref{p-1} directly using the aforementioned studies since the Stiefel manifold constraint lacks of convexity and linearity. The analysis of distributed optimization algorithms in the Euclidean space usually relies on the linear convergence of consensus protocol, which however is difficult to obtain on the manifolds. Since the Stiefel manifold can be viewed as an embedded submanifold in Euclidean space, there exist some recent literatures that solve this problem from the viewpoint of treating it in Euclidean space and develop new tools with the help of Riemannian optimization \cite{AbsilMahonySepulchre+2008, edelman1998geometry}. For example, \cite{pmlr-v139-chen21g} proposed two methods called the decentralized Riemannian SGD algorithm and the decentralized Riemannian gradient tracking algorithm, and established that their convergence rate are $\mathcal{O}(1/\sqrt{k})$ and $\mathcal{O}(1/k)$, respectively. In addition, \cite{wang2022decentralized} proposed a computation-efficient gradient tracking algorithm by using the augmented Lagrangian method, in which the iterations stay in a neighborhood of the Stiefel manifold but finally converge to the manifold with rate $\mathcal{O}(1/k)$. Furthermore, \cite{deng2023decentralized} used the projection operators instead of retractions and expanded the distributed Riemannian gradient descent algorithm and the gradient tracking version to the compact submanifolds of Euclidean space. Besides,  \cite{chen2024decentralized} proposed the first distributed Riemannian conjugate gradient algorithm and proved its global convergence over the Stiefel manifold.
	
	As far as we know, there are only two related works \cite{pmlr-v139-chen21g, wang2022variance} investigating the distributed and stochastic optimization settings on the Stiefel manifold. In \cite{pmlr-v139-chen21g}, the decentralized Riemannian SGD algorithm was proved to be convergent asymptotically with diminishing step sizes. In addition, \cite{wang2022variance} proposed an algorithm that converges with a $\mathcal{O}(1/k)$ rate, however, the iterates do not always stay on the Stiefel manifold and can only be restricted in a neighborhood of the manifold. Moreover, both \cite{pmlr-v139-chen21g} and \cite{wang2022variance} focused on the finite sum problem, while we are devoted to the on-line problem with expectation-valued cost functions. To the best of our knowledge, distributed algorithms for Riemannian stochastic optimization on the Stiefel manifold with an exact convergence under constant step sizes have not been achieved yet.
	
	\subsection{Contributions}
	
	This paper devoted to design an efficient algorithm for solving problem \eqref{p-1} under a connected network. We combine the dynamic gradient tracking method with a variable sample-size method and make the following contributions:
		
		(1) We propose a distributed Riemannian stochastic algorithm on the Stiefel manifold to solve problem \eqref{p-1}. Each agent estimates its local gradient by variable-sampled stochastic gradients, then obtains the search direction based on  the combination of multi-step consensus protocol and a gradient tracking dynamic.
		
		(2) Assuming  that the sampled gradients are unbiased with bounded variance and uniformly bounded on the Stiefel manifold. Suppose, in addition, that the local functions are L-smooth on Euclidean space and the initial points of the agents belong to the local region defined in Section \ref{sec-4}. The iterates of all agents are proved to remain in an invariant subset on the Stiefel manifold and eventually converge to a stationary point(or neighborhood) in expectation with fixed step sizes.
		
		(3) We further show that the variable number of samples has an impact on the convergence rate, since increasing sample size can reduce the noise variance. When an exponentially increasing sample size is utilized, the convergence rate is proved to be $\mathcal{O}(1/k)$, which is comparable to the deterministic framework. While for a constant sample size, the iterates converge to a neighborhood of the stationary point with the rate $\mathcal{O}(1/k)$. We further prove the convergence rate with polynomially increasing sample size as well. Besides, we establish the iteration and oracle complexity, as well as the number of communications, to achieve an $\epsilon$-stationary solution.

	The paper is organized as follows. Section \ref{sec-2} introduces some preliminaries of Riemannian geometry, the average on Stiefel manifold, and the optimality condition.  We reformulate the distributed Riemannian optimization problem \eqref{p-1} and design the algorithm in Section \ref{sec-3}, while the main results are given in Section \ref{sec-4}. Numerical experiments are provided in Section \ref{sec-5}, while some concluding remarks are given in Section \ref{sec-6}.
	
	\textbf{Notations.} Let $X^\top$ denote the transposition of $X$ and $tr(\cdot)$ denote the trace operator. Define the $N$-fold Cartesian product of $\mathcal{M}$ as $\mathcal{M}^N:=\mathcal{M}\times \cdots \times \mathcal{M}$. Let $I_n$ denote the $n\times n$ identity matrix and $\bold{1}_N$ denote the N-dimensional vector with all ones, respectively. In addition, we summarize the symbols used in this work in the following table (Table 1) for ease of reference.

\renewcommand{\arraystretch}{1.1}
\vspace{-0.5cm}
	\begin{table}[h]
		\centering
		\caption{Symbol notations}
		\begin{tabularx}{\columnwidth}{X|X}
			\hline
		Symbol & Notation  \\ \hline
		$\mathcal{M}$& Riemannian manifold\\ \hline
			$T_X \mathcal{M}, N_X \mathcal{M}$ & tangent space and its orthogonal complement at $X$ \\ \hline
		     $\nabla \phi(X)$  & Euclidean gradient \\ \hline
		     $\text{grad}\ \phi(X)$ & Riemannian gradient \\ \hline
		     $D\phi(X)[v]$ ($ v \in T_X \mathcal{M}$)& the directional derivative along $v$ \\ \hline
		     $\|\cdot\|_F$& Frobenius norm\\ \hline
		     $R_X(\cdot)$ & retraction map\\ \hline
		     $\mathcal{R}_X(\cdot)$& polar-retraction\\ \hline
		     $\mathcal{P}_{T_X \mathcal{M}}$ & orthogonal projection onto the tangent space\\ \hline
		     $\bar X$and $ \bold{\bar X}: = (\bm{1}_N \otimes I_n) \bar X$& the Euclidean mean and its stacking matrix\\ \hline
		     $\hat X$and $ \bold{\hat X}: = (\bm{1}_N \otimes I_n) \hat X$& induced arithmetic mean and its stacking matrix\\ \hline
		\end{tabularx}
	\end{table}

	\section{Preliminaries}\label{sec-2}
	\subsection{Riemannian geometry}\label{sec-2.1}
	Before presenting the algorithm, we first introduce some basic concepts and geometric properties of the Riemannian manifold in this section. The reader can refer to literature \cite{AbsilMahonySepulchre+2008} for more information.
	
	A topological space is called a manifold if each of the point has a neighborhood that is diffeomorphism to $\mathbb{R}^n$. If a smooth manifold $\mathcal{M}$ is equipped with a metric $\mathfrak{g}$, then $(\mathcal{M}, \mathfrak{g})$ is called a Riemannian manifold.  We focus on the Stiefel manifold in this context, which is $\text{St}(n,r):=\{X \in \mathbb{R}^{n \times r}:X^\top X = I_r\}$. $\text{St}(n,r)$ is compact and can be seen as the embedded submanifold on the Euclidean space.
	
	We denote the tangent space at a point $X \in \text{St}(n,r)$ by $T_X\mathcal{M}$, and its orthogonal complement with respect to the Euclidean space is denoted by the normal space $N_X\mathcal{M}$. Specifically, the tangent space of Stiefel manifold can be derived as $T_X\mathcal{M}:=\{u \in \mathbb{R}^{n \times r}: X^\top u + u^\top X = 0\}$\cite{edelman1998geometry}, which shows the tangent vectors are matrices. The metric on the tangent space $T_X\mathcal{M}$ is induced from the Euclidean inner product, which is $\mathfrak{g}(u,v) :=\langle u,v \rangle = \text{tr}( u^\top v )$, where $u,v \in T_X\mathcal{M}$. Then the induced norm is equivalent to the Frobenius norm, i.e., $\|u\|_g = \sqrt{\text{tr}(u^\top u) } = \|u\|_F$.
	
	We then introduce the concept of exponential map to connect the tangent space and manifold. An exponential map $\text{Exp}_X$ maps a tangent vector $u$ to a point $Y: =  \text{Exp}_X(u) \in \text{St}(n,r)$ such that there exist a geodesic between $X, Y$ in the direction of $u \in T_X \mathcal{M}$, where geodesic is a curve that locally minimizes the length. However, the computation of an exponential map is costly and has no closed form. The retraction $R_X$ is a first-order approximation of the exponential map with the requirement that 1) $R_X(0_X) = X$, where $0_X$ is the zero vector on $T_X\mathcal{M}$; 2) the differential of $R_X$ at $0_X$ is the identity map, i.e., $D R_X (0_X) = id_{T_X \mathcal{M}} $. For the Stiefel manifold, we choose the polar retraction, which is defined as \[\mathcal{R}_X(v) = (X+v)(I_r+v^\top v)^{-1/2}.\] For the polar retraction, we also have some nice properties summarized in the following lemma.
	\begin{lemma}\label{lem-1}
		\cite{BoumalNicolas} Let $R$ denote a retraction on $\text{St}(n,r)$. Then there exists a constant $M$ such that
		\begin{equation}\label{Rx}
		\begin{aligned}
		\|R_{X}(v) -(X+v)\|_F \leq M\|v\|_F^2,\\
		\forall X\in \text{St}(n,r),\ \forall v \in T_{X}\mathcal{M}.
		\end{aligned}
		\end{equation}
		Moreover, if the retraction is a polar retraction, then for any $X, Y \in \text{St}(n,r)$ and $v \in T_X\mathcal{M}$, it holds \cite[Lemma 1]{doi:10.1137/20M1321000}:
		\[\|\mathcal{R}_{X}(v) -Y\|_F^2 \leq \|X+v -Y\|_F^2.\]
	\end{lemma}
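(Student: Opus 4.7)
The plan is to split the argument into two independent parts corresponding to the two bounds, each leveraging a different property.

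For the first bound I would appeal to the defining properties of a retraction: $R_X(0_X) = X$ and $DR_X(0_X) = \mathrm{id}_{T_X\mathcal{M}}$. A Taylor expansion of $v \mapsto R_X(v)$ at $0_X$ gives $R_X(v) = X + v + Q_X(v)$ with $Q_X$ quadratic in $v$, and the uniform constant $M$ comes from compactness of $\text{St}(n,r)$ together with smoothness of $R$ as a map on the tangent bundle, restricted to a bounded neighborhood of the zero section. For the polar retraction specifically, the bound can also be verified directly from the closed-form expression by expanding $(I_r + v^\top v)^{-1/2} - I_r = -\tfrac{1}{2}v^\top v + O(\|v^\top v\|^2)$ and factoring out $X+v$.

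For the second bound, the key observation is that the polar retraction coincides with the metric projection onto the Stiefel manifold via polar decomposition. Since $v \in T_X\mathcal{M}$ satisfies $X^\top v + v^\top X = 0$, we have $(X+v)^\top(X+v) = I_r + v^\top v \succeq I_r$. Taking the SVD $X+v = U\Sigma V^\top$, every singular value satisfies $\sigma_i \ge 1$ and $\mathcal{R}_X(v) = UV^\top$. Setting $W := U^\top Y V$ and expanding both Frobenius norms through the SVD, I would arrive at
\[
\|X+v-Y\|_F^2 - \|\mathcal{R}_X(v)-Y\|_F^2 = \sum_{i=1}^{r}(\sigma_i^2 - 1) - 2\sum_{i=1}^{r}(\sigma_i - 1)W_{ii}.
\]
Because $Y \in \text{St}(n,r)$ implies $YV$ has orthonormal columns, $|W_{ii}| \le 1$ follows from Cauchy--Schwarz. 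Combining this with $\sigma_i \ge 1$ yields
\[
\|X+v-Y\|_F^2 - \|\mathcal{R}_X(v)-Y\|_F^2 \ge \sum_{i=1}^{r}(\sigma_i - 1)(\sigma_i + 1) - 2\sum_{i=1}^{r}(\sigma_i - 1) = \sum_{i=1}^{r}(\sigma_i - 1)^2 \ge 0,
\]
which is the desired inequality.

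The main obstacle I anticipate is making the constant $M$ in the first bound uniform in both $X$ and $v$: a naive pointwise Taylor estimate is only local in $v$, and the derivatives of $R_X$ vary with $X$. Compactness of $\text{St}(n,r)$ together with the joint smoothness of the retraction is what promotes this to a global statement on any bounded region of tangent vectors. The second bound, by contrast, reduces to a clean SVD identity once tangency is used to force $\sigma_i \ge 1$; the only subtlety there is obtaining the sign control on the cross term, which is handled cleanly by the two-sided bound $|W_{ii}|\le 1$.
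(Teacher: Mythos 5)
The paper does not actually prove this lemma; it imports both inequalities from the cited references (\cite{BoumalNicolas} for the quadratic retraction bound, \cite[Lemma 1]{doi:10.1137/20M1321000} for the non-expansiveness of the polar retraction), so your proposal is being compared against citations rather than an in-text argument. Your proof of the second inequality is complete and correct: the tangency condition $X^\top v + v^\top X = 0$ does give $(X+v)^\top(X+v) = I_r + v^\top v \succeq I_r$, hence $\sigma_i \ge 1$; the identity $\mathcal{R}_X(v) = UV^\top$ follows from $(I_r+v^\top v)^{-1/2} = V\Sigma^{-1}V^\top$; and the expansion of the two squared norms through the SVD, together with $|W_{ii}| \le 1$ (valid because $Y$ has orthonormal columns, so $\|Yv_i\|_2 = 1$) and $\sigma_i - 1 \ge 0$, yields the telescoped lower bound $\sum_i(\sigma_i-1)^2 \ge 0$ exactly as you write. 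This is a clean, self-contained derivation of a fact the paper only cites.

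The first inequality is where a small but genuine gap remains, and you have half-identified it yourself. The Taylor/compactness argument gives a uniform constant $M$ only on a bounded region of the tangent bundle (your phrase ``restricted to a bounded neighborhood of the zero section''), whereas the lemma asserts the bound for \emph{all} $v \in T_X\mathcal{M}$, with $\|v\|_F$ unbounded. The missing step is elementary: for $\|v\|_F \ge c$ (any fixed $c>0$), both $R_X(v)$ and $X$ lie in the compact manifold, so $\|R_X(v)-(X+v)\|_F \le \mathrm{diam}(\mathcal{M}) + \|v\|_F \le \left(\tfrac{\mathrm{diam}(\mathcal{M})}{c^2} + \tfrac{1}{c}\right)\|v\|_F^2$, and one takes the maximum of this constant with the local Taylor constant. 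This two-case split is precisely what the paper's Remark 1 encodes in its formula for $M$ (the second entry of the max handles large $\|v\|$). Add that one sentence and the first part is also complete.
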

	\begin{remark}
		The inequality \eqref{Rx} implies that $R_X$ satisfies $R_X(v) = X+v+\mathcal{O}(\|v\|_F^2)$. Specifically, for the polar retraction, we also have a non-expansiveness property. Note that the lemma only applies to the compact submanifolds of the Euclidean space. It has been shown in the appendix of \cite{BoumalNicolas} that the constant $M$ can be calculated by \[M = \max \left\{\frac{1}{2} \max_{\zeta \in K}\|D^2 R_X(\zeta)\|, \frac{\text{diam}(\mathcal{M})+r}{r^2}\|v\|^2\right\} ,\]  where retraction $R$ is smooth on the set $K:=\{\zeta \in T\mathcal{M}:\|\zeta\| \leq r\}$, and $\text{diam}(\mathcal{M})$ is the diameter of $\mathcal{M}$ (which is finite as $\mathcal{M}$ is compact). \cite{liu2019quadratic} further shows that for polar retraction, if $\|v\|_F \leq 1$ then $M = 1$.
	\end{remark}
	
	\subsection{Average on Stiefel Manifold}\label{sec-2.2}
	We denote the variable of each agent $i$ by $X_i$. By stacking all of them, we have $\mathbf{X}:=[X_1^\top,\dots,X_N^\top]^\top \in \text{St}(n,r)^N$. Denote the Euclidean average of the variables by $\bar X:=\sum_{i=1}^N X_i $ and $ \bold{\bar X}: = (\bm{1}_N \otimes I_n) \bar X$.
	
	
	In Euclidean space, the multi-agent systems ultimately achieve consensus to the Euclidean average of all the agents, and the error bound $\|X_i-\bar X\|_F$ is typically used. When the variables satisfy $X_1, \dots,  X_N \in \text{St}(n,r)$, their Euclidean average may not be on the Stiefel manifold. Thus, we extend the concept of average in Euclidean space to the Stiefel manifold and introduce the induced arithmetic mean (IAM) \cite{doi:10.1137/060673400}, which is defined as  \begin{equation}\label{iam}
	\hat X \in \arg \min_{Y \in \text{St}(n,r)} \sum_{i=1}^N \|Y - X_i\|_F^2.
	\end{equation}
	
It has been shown in \cite{doi:10.1137/060673400} that IAM is the orthogonal projection of the Euclidean average onto the Stiefel manifold, i.e., \[\hat X= \mathcal{P}_{\text{St}}(\bar X).\]  Let $\mathcal{X}^*:=\{\bold{X} \in \text{St}(n,r)^N:X_1 = X_2 = \dots =X_N\}$ denote the consensus configuration on $\text{St}(n,r)$ \cite{doi:10.1137/060673400}. Then the distance from $\bold{X}$ to $\mathcal{X}^*$ is given by\[\text{dist} ^2 (\bold{X}, \mathcal{X}^*) =  \inf_{Y \in \text{St}(n,r)} \frac{1}{N} \sum_{i=1}^N \|Y-X_i\|_F^2 \overset{\eqref{iam}}{=} \frac{1}{N}\|\bold{X}- \bold{\hat X}\|_F^2,\] which will be adopted as a metric to analyze the consensus error in the later sections.

	\begin{remark}
		Here, we consider the consensus to IAM as it is essentially similar to the Karcher mean, which has a wide range of applications in many machine learning problems, for instance, the solution of PCA is the Karcher mean of the data samples. The difference between Karcher mean and IAM lies in that they use geodesic distance and Euclidean distance in the definition, respectively. In this context, we use the IAM due to its convenience in computation.
	\end{remark}
	
	\subsection{Optimality Condition}\label{sec-2.3}
	In this part, we will  introduce the optimality condition regarding the optimization problem on the manifold. First of all, we need to state the definition of Riemannian gradient since it is vital important in searching the optimal solution.  For a function $\phi$ defined on the Stiefel manifold, the only tangent vector that satisfies $\langle \text{grad}\  \phi(X), v \rangle = D \phi(X)[v]$ ($\forall v \in T_X \mathcal{M}$) is called a Riemannian gradient $\text{grad}\  \phi(X)$, where $D \phi(X)[v] = \frac{d f(\gamma(t))}{dt} \big|_{t=0}$ and the curve $\gamma$ satisfies $\gamma(0) = X$.  As the Stiefel manifold is embedded on the Euclidean space, we have the following relation between Euclidean and Riemannian gradient \cite{AbsilMahonySepulchre+2008}: \begin{equation}\label{rg}
	\text{grad}\ \phi(X) = \mathcal{P}_{T_X \mathcal{M}}(\nabla \phi(X)),
	\end{equation}where $\mathcal{P}_{T_X\mathcal{M}}$ denote the orthogonal projection onto the tangent space $T_X\mathcal{M}$. Especially, for $\forall Y\in \mathbb{R}^{n \times r}$, we have: \begin{equation}\label{1}
	\mathcal{P}_{T_X\mathcal{M}}(Y) =Y-\frac{1}{2}X(X^\top Y+ Y^\top X).
	\end{equation}
	
	Lipschitz smooth is a typically used concept in theoretical analysis of optimization problems \cite{qu2017harnessing, pu2021distributed}. Generally, a function $\phi$ is said to be $L$-smooth, if $\phi$ is differentiable and has $L$-Lipschitz continuous Euclidean gradient, i.e., for $\forall X,Y \in \mathbb{R}^{n \times r}$ \[\|\nabla \phi(X) - \nabla \phi(Y)\|_F \leq L\|X-Y\|_F.\]
	
	We then provide a Lipschitz-type inequality for the functions on the Stiefel manifold and formally state it in the following lemma.
	\begin{lemma}\label{al-4}
		\cite[Lemma 2.4]{pmlr-v139-chen21g} For any $X,Y\in \text{St}(n,r)$, if a function $\phi(X)$ is L-smooth in Euclidean space, then there exists a constant $L_g=L+L_n$ with $L_n = \max_{X \in \text{St}(n,r)} \|\nabla \phi(X)\|_2$ such that \begin{equation}\label{smooth}
		\left|\phi(Y)-[\phi(X)+\langle \text{grad}\ \phi(X),Y-X\rangle]\right| \leq \frac{L_g}{2}\|Y-X\|_F^2.
		\end{equation}
		Moreover, if $L_G = L+2L_n$, it holds
		\begin{equation}
		\|\text{grad}\ \phi(X)-\text{grad}\ \phi(Y)\|_F\leq L_G\|X-Y\|_F.
		\end{equation}
	\end{lemma}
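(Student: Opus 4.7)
The plan is to decompose both claimed inequalities into a Euclidean-smoothness contribution (contributing $L$) plus a correction coming from the projection onto $T_X\mathcal{M}$ (contributing $L_n$ or $2L_n$). Throughout, I will use the closed-form projection formula \eqref{1}, namely $\mathcal{P}_{T_X\mathcal{M}}(Y)=Y-\tfrac12 X(X^\top Y+Y^\top X)$, together with \eqref{rg} and the identity $X^\top Y+Y^\top X = 2I_r-(Y-X)^\top(Y-X)$ for $X,Y\in\text{St}(n,r)$ (which follows from $X^\top X=Y^\top Y=I_r$).

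For the first inequality, I would start from the standard Euclidean descent lemma $|\phi(Y)-\phi(X)-\langle\nabla\phi(X),Y-X\rangle|\le \tfrac{L}{2}\|Y-X\|_F^2$ and then replace $\nabla\phi(X)$ by $\text{grad}\,\phi(X)$. Writing $\nabla\phi(X)-\text{grad}\,\phi(X)=\mathcal{P}_{N_X\mathcal{M}}(\nabla\phi(X))$ and using self-adjointness of the orthogonal projection, the residual becomes $\langle\nabla\phi(X),\mathcal{P}_{N_X\mathcal{M}}(Y-X)\rangle$. By the identity above, $\mathcal{P}_{N_X\mathcal{M}}(Y-X)=-\tfrac12 X(Y-X)^\top(Y-X)$. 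Then von~Neumann's trace inequality applied to the symmetric positive semidefinite factor $(Y-X)^\top(Y-X)$ gives
\[
\bigl|\langle\nabla\phi(X),\mathcal{P}_{N_X\mathcal{M}}(Y-X)\rangle\bigr|\le \tfrac12\|\nabla\phi(X)\|_2\,\|X\|_2\,\|Y-X\|_F^2\le \tfrac{L_n}{2}\|Y-X\|_F^2,
\]
since $\|X\|_2=1$ on the Stiefel manifold. A triangle inequality then yields the constant $L_g=L+L_n$.

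For the Lipschitz continuity of the Riemannian gradient, I would add and subtract $\mathcal{P}_{T_X\mathcal{M}}(\nabla\phi(Y))$:
\[
\|\text{grad}\,\phi(X)-\text{grad}\,\phi(Y)\|_F\le \|\mathcal{P}_{T_X\mathcal{M}}(\nabla\phi(X)-\nabla\phi(Y))\|_F+\|(\mathcal{P}_{T_X\mathcal{M}}-\mathcal{P}_{T_Y\mathcal{M}})\nabla\phi(Y)\|_F.
\]
The first piece is bounded by $L\|X-Y\|_F$ using non-expansiveness of the orthogonal projection and Euclidean $L$-smoothness. For the second, using \eqref{1} the difference equals $\tfrac12\bigl[(YY^\top-XX^\top)G+(YG^\top Y-XG^\top X)\bigr]$ with $G:=\nabla\phi(Y)$. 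Writing $YY^\top-XX^\top=Y(Y-X)^\top+(Y-X)X^\top$ and $YG^\top Y-XG^\top X=YG^\top(Y-X)+(Y-X)G^\top X$, each resulting product can be bounded by $\|G\|_2\|Y-X\|_F\le L_n\|Y-X\|_F$ thanks to $\|X\|_2=\|Y\|_2=1$ and the submultiplicativity $\|AB\|_F\le \|A\|_2\|B\|_F$. Summing the four contributions gives $2L_n\|X-Y\|_F$, hence the overall constant $L_G=L+2L_n$.

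The main obstacle, though quite mild, is the projection-difference estimate: one must be careful to split $YY^\top-XX^\top$ and $YG^\top Y-XG^\top X$ in a way that exposes a single factor of $(Y-X)$ per term and keeps the operator-norm bounds on $X$, $Y$, and $G$ tight, so that the final constant lands exactly at $2L_n$ and not something larger. The Stiefel-specific algebra (compactness giving $\|X\|_2=1$ and the orthogonality identity linking $X^\top Y+Y^\top X$ to $(Y-X)^\top(Y-X)$) is what converts the seemingly first-order normal-component error into a genuinely quadratic-in-$\|Y-X\|_F$ bound in the descent lemma.
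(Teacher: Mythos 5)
The paper does not prove this lemma at all: it is imported verbatim as a citation of Lemma~2.4 of \cite{pmlr-v139-chen21g}, so there is no in-paper argument to compare against. Your proof is correct and is essentially the standard derivation used in that reference: the Euclidean descent lemma supplies the $L$ term, the identity $X^\top(Y-X)+(Y-X)^\top X=-(Y-X)^\top(Y-X)$ turns the normal-space residual $\langle\nabla\phi(X),\mathcal{P}_{N_X\mathcal{M}}(Y-X)\rangle$ into a genuinely quadratic term bounded by $\tfrac{L_n}{2}\|Y-X\|_F^2$ via $\|X\|_2=1$, and the split of $\text{grad}\,\phi(X)-\text{grad}\,\phi(Y)$ into a projected gradient difference plus a projection difference, with the rank-one-style factorizations of $YY^\top-XX^\top$ and $YG^\top Y-XG^\top X$, lands exactly on the constants $L_g=L+L_n$ and $L_G=L+2L_n$. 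No gaps.
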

	
	Next, we give the necessary first-order optimality condition of problem \eqref{p-1}.
	\begin{proposition}
		\cite{BoumalNicolas}Suppose $f$ is differentiable at $X$, if $X \in \mathcal{M}$ is a local optimum for problem \eqref{p-1},  then it holds $\text{grad}f(X) = 0$.
	\end{proposition}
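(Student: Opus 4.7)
The plan is to reduce the manifold optimality condition to a one-variable calculus statement by restricting $f$ to smooth curves through $X$. Since $X$ is a local minimum on $\mathcal{M}$, for any smooth curve $\gamma:(-\epsilon,\epsilon)\to\mathcal{M}$ with $\gamma(0)=X$ the scalar function $h(t):=f(\gamma(t))$ has a local minimum at $t=0$, hence $h'(0)=0$ whenever $h$ is differentiable at $0$. This reduction is the backbone of the argument.

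Next, I would show that every tangent vector is realized as the initial velocity of some admissible curve. Given $v\in T_X\mathcal{M}$, I would take $\gamma(t):=\mathcal{R}_X(tv)$, where $\mathcal{R}$ is the polar retraction introduced in Section~\ref{sec-2.1}. The retraction properties $\mathcal{R}_X(0_X)=X$ and $D\mathcal{R}_X(0_X)=\mathrm{id}_{T_X\mathcal{M}}$ recalled just before Lemma~\ref{lem-1} guarantee that $\gamma(0)=X$ and $\dot\gamma(0)=v$, and $\gamma$ takes values in $\mathcal{M}$. Then, using the chain rule and the definition of the directional derivative stated in Section~\ref{sec-2.3},
\begin{equation*}
0=h'(0)=Df(X)[\dot\gamma(0)]=Df(X)[v]=\langle \mathrm{grad}\,f(X),v\rangle,
\end{equation*}
where the last equality is the defining identity of the Riemannian gradient.

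Finally, since $v\in T_X\mathcal{M}$ was arbitrary and $\mathrm{grad}\,f(X)\in T_X\mathcal{M}$ by the projection formula \eqref{rg}, choosing $v=\mathrm{grad}\,f(X)$ yields $\|\mathrm{grad}\,f(X)\|_F^2=0$, hence $\mathrm{grad}\,f(X)=0$.

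The only genuine subtlety is justifying the existence of a smooth curve in $\mathcal{M}$ with a prescribed initial velocity in $T_X\mathcal{M}$; I expect this to be the main (mild) obstacle, but it is handled cleanly by invoking the polar retraction already set up in Lemma~\ref{lem-1}. All other steps are routine calculus and the definitions collected in Section~\ref{sec-2}.
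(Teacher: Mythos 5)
Your proof is correct. The paper gives no proof of this proposition and simply cites \cite{BoumalNicolas}, where the argument is exactly the one you give: realize each $v\in T_X\mathcal{M}$ as the velocity of a retraction curve $t\mapsto \mathcal{R}_X(tv)$, apply single-variable calculus to $f\circ\gamma$, and conclude via the defining identity of the Riemannian gradient with $v=\mathrm{grad}\,f(X)$.
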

	
	\section{Distributed Riemannian Stochastic Gradient Tracking Method}\label{sec-3}
	In this section, we first reformulate the problem \eqref{p-1}, and then propose a distributed Riemannian stochastic optimization algorithm for solving it based on the variable sampling scheme and gradient tracking method.
	
	\subsection{Problem Reformulation}\label{sec-2.4}
	Let $X_i \in \text{St}(n,r)$ be a local copy of the optimization variable of each agent $i$. Since the graph $\mathcal{G}$ is connected, $X_i=X_j,~\forall  (i,j) \in \mathcal{E}$ is equivalent to the consensus condition $X_1=\cdots=X_N$. We reformulate the optimization problem \eqref{p-1} as
	\begin{subequations}\label{p-2}
		\begin{align}
		\min&\ \frac{1}{N}\sum_{i=1}^Nf_i(X_i),\\
		s.t.~& X_i=X_j,~\forall (i,j) \in \mathcal{E},\label{c-1}\\
		&X_i \in \text{St}(n,r),\ i\in \mathcal{N}.
		\end{align}
	\end{subequations}
	The local cost function for agent $i$ is defined as $f_i(X_i): = \mathbb{E}_{\xi_i } F_i(X_i, \xi_i)$, where $\xi_i$ is a random variable defined on the probability space $(\Omega_i, \mathcal{F}_i, \mathbb{P})$ and usually represents a data sample in machine learning. Many popular machine learning models can be formulated by this optimization problem, including logistic regression, dictionary learning \cite{7219480}, and deep learning \cite{lecun2015deep}. The index $i$ indicates that the accessible data set for each agent might be different.
	
	We assume that the communication graph $\mathcal{G}=(\mathcal{N},\mathcal{E})$  is undirected,  namely, $(j,i)\in \mathcal{E}$ if and only if $(i,j)\in \mathcal{E}$. We then define an associated   adjacency matrix $W:=\{w_{ij}\}_{N\times N}$, where $w_{ij}>0$ if $(i,j)\in \mathcal{E}$ or $i=j$, and $w_{ij}=0$, otherwise. We make the following assumptions on $\mathcal{G}$, which are standard in the existing literatures.
	\begin{assumption}\label{assu-1}
		Suppose the graph $\mathcal{G}$ is undirected and connected, and $W$ satisfies
		\begin{itemize}
			\item[1)]$W=W^\top$;
			\item[2)]$W$ is nonnegative and $W \bm{1}_N = W^\top \bm{1}_N = \bm{1}_N$ .
		\end{itemize}
	\end{assumption}
	\begin{remark}\label{r-1}
		According to Assumption \ref{assu-1}, any power of the matrix $W$, i.e., $W^t(=W^{t-1}W)$, where $t \geq 1$, is also symmetric and doubly stochastic. Since the graph is connected, all the eigenvalues of $W$ are in $(-1,1]$, and the second largest singular value $\sigma_2\in [0,1)$ \cite{1406483}, which is also the spectral norm of $W - \frac{1}{N}\bm{1_N} \bm{1_N}^\top$ \cite[Th. 5.1]{doi:10.1137/060678324}.
	\end{remark}
	
	Since the distribution of each $\xi_i$ is assumed to be unknown, the expectation-valued local cost functions of the agents cannot be calculated explicitly. To solve problem \eqref{p-2}, we assume there exists a stochastic oracle to obtain noisy Riemannian gradient samples of the form $G_i(X, \xi_i)$. To articulate sufficiency conditions, we make some assumptions on the cost functions and the stochastic oracle.
	\begin{assumption}\label{assu-5}
		For each agent $i$, the local cost function $f_i$ is L-smooth in Euclidean space.
	\end{assumption}
	
	We impose the following Assumption \ref{assu-2} and \ref{assu-3} to better estimate the exact Riemannian gradient. These assumptions hold for many online distributed optimization problems and can also be seen in \cite{pmlr-v139-chen21g, 6487381}.
	\begin{assumption}\label{assu-2}
		The Riemannian gradient sample is unbiased, i.e., for each $i \in \mathcal{N}$ and any given $X \in \text{St}(n,r)$, $E_{\xi_i}[G(X, \xi_i)] = \text{grad}f_i(X)$; and has bounded variance, i.e.,  $E_{\xi_i}[\|G(X, \xi_i) - \text{grad}f_i(X)\|_F^2| X] \leq \sigma^2$.
	\end{assumption}
	
	\begin{assumption}\label{assu-3}
		The norm of the noisy Riemannian gradient samples are uniformly bounded, i.e., for all $i \in \mathcal{N}$, there exists a constant $A_1>0$, such that $\max_{X\in \mathcal{M},\xi_i \in \text{supp}(\mathbb{P})}\|G_i(X, \xi_{i})\| \leq A_1$.
	\end{assumption}
	\begin{remark}\label{re-4}
		Since $f$ is the average of all $f_i$, Assumption \ref{assu-5} implies that $f$ is also L-smooth.
		Due to the compactness of the Stiefel manifold, there exists a constant $A_2$ such that $\max_{X \in \text{St}(n,r)}\|\nabla f_i(X)\|_F \leq A_2$ according to Assumption \ref{assu-5}. Thus, we have $\max_{X \in \text{St}(n,r)}\|\text{grad} f_i(X)\|_F \leq A_2$ by \eqref{rg} and the nonexpansiveness of the orthogonal projection. For the sake of simplicity in writing, we denote $A:= \max\{A_1, A_2\}$.
	\end{remark}
	
	\subsection{Algorithm Design}
	Inspired by the distributed Euclidean algorithms in \cite{pu2021distributed,qu2017harnessing}, the idea of finding the optimal solution of problem \eqref{p-2} is based on a consensus dynamic and a gradient descent dynamic.
	
	First, we briefly introduce the consensus problem on the Stiefel manifold. Let $h_{i,t}(\bold{X}):=\frac{1}{2} \sum_{j=1}^N W_{ij}^t\|X_i-X_j\|_F^2$ denote the local consensus potential. As discussed in \cite{MARKDAHL2020108736}, the problem can be formulated as follows.
	\begin{equation}\label{p-3}
	\min_{X_i \in \text{St(n,r)}} h_t(\bold{X}):=\frac{1}{2}\sum_{i=1}^Nh_{i,t}(\bold{X})=\frac{1}{4}\sum_{i=1}^N \sum_{j=1}^N W_{ij}^t\|X_i-X_j\|_F^2,
	\end{equation}
	where $W_{ij}^t$ represent the $i$-$j$th element of $W^t$. To solve problem \eqref{p-3}, each agent $i \in \mathcal{N}$ communicates with its direct neighbors and computes the weighted average for $t$ times in one iteration by the term $\sum_{j=1}^N W_{ij}^t X_{j}$. Since this may not stays on the tangent space $T_{X_{i}} \mathcal{M}$, a projection step is performed to utilize the nice property of the retraction.
	
	Let $X_{i,k}$ denote the estimation of optimal solution to the problem \eqref{p-1} at iteration $k$. To further solve problem \eqref{p-1}, we can view it as the following Riemannian optimization problem:
	\[	\min_{\bold{X}\in \text{St}^N(n,r)} \ \alpha h_t(\bold{X})+ \beta f(\bold{X}),\] which gives us some insight into combining the consensus dynamic \cite{chen2021local} and an iteration direction $v_{i,k}$ to obtain the total search direction. We also use the retraction to ensure feasibility on the Stiefel manifold. Hence, each agent $i$ updates variable $X_{i,k}$ by: \begin{equation}\label{4}
	X_{i,k+1} = \mathcal{R}_{X_{i,k}} \left(\alpha \mathcal{P}_{T_{X_{i,k}} \mathcal{M}}\left[\sum_{j=1}^N
	W_{ij}^t X_{j,k}\right] - \beta v_{i,k}\right),
	\end{equation}
	where $v_{i,k}$ is the iteration direction later defined in \eqref{v}. We replace the commonly used exponential map \cite{shah2017distributed,zhang2016first} by retraction to present an efficient algorithm, as has been discussed in Section \ref{sec-2.1}.
	
	As the distribution of each $\xi_i$ is unknown, the cost function $f_i(X)$ usually has no explicit expression which leads to a lack of the exact $\text{grad} f_i(X)$. To resolve this, we usually use stochastic approximation in algorithm design. For any given $X, \xi$, there is a first-order stochastic oracle that returns some noisy Riemannian gradient samples of the form $G_i(X, \xi) \in T_X \mathcal{M}$, which is an unbiased estimation of the Riemannian gradient of $f_i$. Denote the average of variable sampled gradients by\begin{equation}\label{v-s}
	F_i(X_{i,k+1}) = \frac{1}{N_k} \sum_{j=1}^{N_k} G_i(X_{i,k+1}, \xi_{i,k+1}^{[j]}),
	\end{equation} where $N_k$ is the number of samples used at iteration $k$, and $\xi_{i,k}^{[j]}$($\forall j=1,\dots, N_k$) are the i.i.d. realizations of the random variable $\xi_{i,k}$. By appropriately increasing the size of sampling with iteration, we can reduce the stochastic variance caused by noise. As the samples $\xi_{i,k}^{[j]} $ are independent for $j = 1 ,\dots, N_k$, we derive  from Assumption \ref{assu-2} that for any $k\geq 0$:
	\begin{equation}\label{oracle}
	\begin{aligned}
	&E_{\xi_i}[F_i(X_{i,k})] = \frac{1 }{N_k}\sum_{j=1}^{N_k} E[G_i(X_{i,k}, \xi_{i,k}^{[j]})] =\text{grad}f_i(X_{i,k}),\\
	&E[\|F_i(X_{i,k})-\text{grad}f_i(X_{i,k})\|^2| X_{i,k}] \\
	=& \frac{\sum_{j=1}^{N_k} E[\|G_i(X_{i,k}, \xi_{i,k}^{[j]})- \text{grad}f_i(X_{i,k})\|^2|X_{i,k}] }{N_k^2} =\frac{ \sigma ^2}{N_k}.
	\end{aligned}
	\end{equation}
	This implies that $F_i(X_{i,k})$ is also an unbiased estimation, and its variance decreases with the increasing of sample size $N_k$.
	\begin{remark}
		Though the authors of \cite{lei2022distributed} have proposed a distributed algorithm based on the idea of variable sampling in Euclidean space, our Algorithm \ref{alg:1} is still worth analyzing. It is usually invalid to add the tangent vectors belonging to different tangent spaces directly. Hopefully, \eqref{v-s} is well-defined on the Stiefel manifold, since $G_i(X, \xi)$ are actually matrices, for any $X, \xi$. As have been stated in Introduction, such a distributed Riemannian stochastic gradient tracking algorithm has not been analyzed on the manifold yet.
	\end{remark}
	
	For each agent $i \in \mathcal{N}$, we introduce an auxiliary variable $Y_{i,k}$ to asymptotically track the dynamical average gradient across the network, which is updated by \begin{equation}\label{g-t}
	Y_{i,k+1} = \sum_{j=1}^{N}W_{ij}^t Y_{j,k}+F_i(X_{i,k+1})-F_i(X_{i,k}).
	\end{equation} Since the Stiefel manifold is the embedded submanifold of Euclidean space, $F_i(X_{i,k})$ can be viewed as the projected Euclidean gradient, it is free to calculate $F_i(X_{i,k})-F_i(X_{i,k-1})$. However, $Y_{i,k}$ not necessarily remain on the tangent space $T_{X_{i,k}}\mathcal{M}$ after performing the gradient tracking step. In order to use the nice property of the retraction,  it is important to take an orthogonal projection onto the tangent space before updating \eqref{4}, namely, \begin{equation}\label{v}
		v_{i,k} = \mathcal{P}_{T_{X_{i,k}} \mathcal{M}} Y_{i,k}.
	\end{equation}
	
	We summarized the procedure of our algorithm as follows.
	\begin{algorithm}
		\caption{Distributed Riemannian Stochastic Gradient Tracking Algorithm}
		\label{alg:1}
		\begin{algorithmic}
			\REQUIRE For all $i\in \mathcal{N}$, set $\bold{X}_{0} \in \mathcal{S},\ Y_{i,0} = F_i(X_{i,0})$, where the local region $\mathcal{S}$ will be formally defined in Definition \ref{l-r}. Let the constant step sizes $\alpha, \beta>0$ and $t \geq \left[\log_{\sigma_2}\left(\frac{1}{2 \sqrt{N}}\right)\right]$.
			\FOR {$k=0,1,2,\dots,K$}
			\STATE Project $Y_{i,k}$ onto the tangent space by \eqref{v};
			\STATE Update the optimal variable $X_{i,k+1}$by \eqref{4};
			\STATE Sample the stochastic gradient by \eqref{v-s};
			\STATE Update the auxiliary variable $Y_{i,k+1}$ by \eqref{g-t}.
			\ENDFOR
		\end{algorithmic}
	\end{algorithm}
	
	\section{Main Results}\label{sec-4}
	We analyze the convergence and establish the convergence rate of Algorithm \ref{alg:1} in this section.

\subsection{Technical Lemmas}

According to \cite{chen2021local}, the multi-step consensus algorithm on the Stiefel manifold achieves Q-linearly convergence in a local region due to its non-convexity. Thus, the convergence analysis of Algorithm \ref{alg:1} is restricted to this local region in the later context. Next, we formally introduce its definition.
\begin{definition}\label{l-r}
	For given constants $\delta_1 \leq \frac{1}{5\sqrt{r}}\delta_2$, and $\delta_2 \leq \frac{1}{6}$, the local region is defined as
	$$\begin{aligned}
	\mathcal{S}&:=\mathcal{S}_1 \bigcap\mathcal{S}_2,\\
	\mathcal{S}_1&:=\{\bold{X}:\|\bold{X}-\hat {\bold{X}}\|_F^2 \leq N \delta_1^2\}, \\
	\mathcal{S}_2&:=\{\bold{X}: \bold{X}\in \text{St}(n,r)^N, \max_{i=1, \dots, N}\|X_i - \hat X\|_F \leq \delta_2\}.
	\end{aligned}$$
\end{definition}

Due to the lack of explicit expressions for IAM, which is difficult to quantitatively estimate and calculate. Therefore, we introduce some technical lemmas that will be used in the convergence analysis.

First, we show that the distance between $\bar X$ and $\hat X$ are bounded by the consensus error.
\begin{lemma}\label{al-1}
	\cite[Lem.1]{chen2021local} For any $ \bold{X} \in \text{St}(n,r)^N$, let $\hat X$ denote its IAM (defined by \eqref{iam}) and $\bold{\hat X}: = (\bm{1}_N \otimes I_n) \hat X$. If $\|\bold{X}- \bold{\hat X}\|_F^2 \leq N/2$, it holds
	\[\|\hat X-\bar X\|_F \leq \frac{2\sqrt{r}\|\bold{X}- \bold{\hat X}\|_F^2}{N}.\]
\end{lemma}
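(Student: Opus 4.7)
The plan is to exploit the closed-form expression for $\hat X$ as the orthogonal projection of $\bar X$ onto the Stiefel manifold via the singular value decomposition. Writing the thin SVD $\bar X = U\Sigma V^\top$ with $U\in\mathbb{R}^{n\times r}$ having orthonormal columns, $V\in\mathbb{R}^{r\times r}$ orthogonal, and $\Sigma = \mathrm{diag}(\sigma_1,\ldots,\sigma_r)$, the standard fact recalled in Section \ref{sec-2.2} is that $\hat X = UV^\top$ whenever $\bar X$ has full column rank. I would use the hypothesis $\|\bold{X}-\bold{\hat X}\|_F^2 \leq N/2$ to confirm that this rank condition holds.

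First I would expand the consensus error using $\|X_i\|_F^2 = \|\hat X\|_F^2 = r$ together with $\langle \bar X, \hat X\rangle = \mathrm{tr}(V\Sigma V^\top) = \sum_{i=1}^r \sigma_i$ to obtain the identity
\[
\|\bold{X}-\bold{\hat X}\|_F^2 \;=\; 2Nr - 2N\langle \bar X,\hat X\rangle \;=\; 2N\sum_{i=1}^r(1-\sigma_i).
\]
Using the same SVD, the quantity to be bounded is
\[
\|\hat X-\bar X\|_F^2 \;=\; \|U(I-\Sigma)V^\top\|_F^2 \;=\; \sum_{i=1}^r(1-\sigma_i)^2.
\]

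Since each $X_i\in\text{St}(n,r)$ has spectral norm $1$, we have $\|\bar X\|_2\leq 1$, whence $\sigma_i\in[0,1]$ and $1-\sigma_i\geq 0$; the hypothesis further implies $\sum_i(1-\sigma_i)\leq 1/4$, so each $\sigma_i\geq 3/4>0$, which validates the SVD formula for $\hat X$. By non-negativity of each $1-\sigma_i$ the cross terms in $(\sum_i(1-\sigma_i))^2$ are non-negative, so $\sum_i(1-\sigma_i)^2 \leq \bigl(\sum_i(1-\sigma_i)\bigr)^2$, and combining with the two identities above gives
\[
\|\hat X-\bar X\|_F \;\leq\; \sum_{i=1}^r(1-\sigma_i) \;=\; \frac{\|\bold{X}-\bold{\hat X}\|_F^2}{2N},
\]
which implies the stated bound since $\tfrac{1}{2}\leq 2\sqrt{r}$ for every integer $r\geq 1$.

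The only nontrivial step is the SVD characterization of $\hat X$ together with the verification that $\bar X$ has full column rank under the hypothesis; once these are in place, the remainder is routine Frobenius-norm algebra, and I do not expect any substantial obstacle. The $\sqrt{r}$ factor appearing in the claim is slack that could be dropped, but carrying it through is harmless and perhaps convenient for later chains of inequalities.
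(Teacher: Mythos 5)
Your proof is correct. Note first that the paper itself offers no proof of this lemma --- it is imported verbatim from \cite[Lem.~1]{chen2021local} --- so there is no in-paper argument to compare against; what you have written is a valid self-contained derivation, and it in fact proves the sharper bound $\|\hat X-\bar X\|_F \leq \tfrac{1}{2N}\|\bold{X}-\bold{\hat X}\|_F^2$, of which the stated $\tfrac{2\sqrt r}{N}$ constant is a weakening. The two identities $\|\bold{X}-\bold{\hat X}\|_F^2 = 2N\sum_{i=1}^r(1-\sigma_i)$ and $\|\hat X-\bar X\|_F^2=\sum_{i=1}^r(1-\sigma_i)^2$ are right, $\sigma_i\le\|\bar X\|_2\le 1$ gives nonnegativity of each $1-\sigma_i$, and the step $\sum_i(1-\sigma_i)^2\le\bigl(\sum_i(1-\sigma_i)\bigr)^2$ is just $\|v\|_2\le\|v\|_1$ for a nonnegative vector. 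One presentational point worth tightening: as written, you compute $\langle\bar X,\hat X\rangle=\mathrm{tr}(\Sigma)$ by substituting $\hat X=UV^\top$, but that formula is only being justified afterwards from the very identity it is used to derive. The clean order is to observe that minimizing $\sum_i\|Y-X_i\|_F^2=2Nr-2N\langle Y,\bar X\rangle$ over $Y\in\text{St}(n,r)$ is maximizing $\langle Y,\bar X\rangle$, whose optimal value is $\sum_i\sigma_i$ by von Neumann's trace inequality regardless of the rank of $\bar X$; this gives $\|\bold{X}-\bold{\hat X}\|_F^2=2N\sum_i(1-\sigma_i)$ for \emph{any} IAM, the hypothesis then forces $\sigma_i\ge 3/4$ and hence full rank and $\hat X=UV^\top$, and the rest of your argument proceeds unchanged. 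This is a reordering, not a gap.
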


Suppose that $\hat X,\hat Y,\bar X,\bar Y$ are defined as in Section \ref{sec-2.2}. Then $\hat X,\hat Y$ can be viewed as the polar decomposition of $\bar X,\bar Y$ \cite{doi:10.1137/S0895479801394623}, respectively. The following lemma helps to estimate the bound of the distance between $\hat X_{k+1}$ and $\hat X_{k}$.
\begin{lemma}\label{pr}
	\cite[Th.2.4]{doi:10.1137/S0895479801394623} Denote the eigenvalues of $\bar{X}\in \text{St}(n,r)$ are $\sigma_1(\bar{X})\geq \sigma_2(\bar{X})\geq \dots \geq \sigma_r(\bar{X})$, likely wise, the eigenvalues of $\bar{Y}\in \text{St}(n,r)$ are $\sigma_1(\bar{Y})\geq \sigma_2(\bar{Y})\geq \dots \geq \sigma_r(\bar{Y})$. It holds
	\begin{equation}\label{al-2}
	\|\hat Y-\hat X\|_F \leq \frac{2}{\sigma_r(\bar X)+\sigma_r(\bar Y)}\|\bar Y-\bar X\|_F.
	\end{equation}
	Moreover, if we also have $\bold{X}, \bold{Y} \in \mathcal{S}_1$, then it holds \cite[Lem.12]{chen2021local}
	\begin{equation}\label{col-1}
	\frac{2}{\sigma_r(\bar X)+\sigma_r(\bar Y)} \leq \frac{1}{1-2\delta_1^2},
	\end{equation}
	where $\delta_1$ is defined in Definition \ref{l-r}.
\end{lemma}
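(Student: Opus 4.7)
The plan is to prove the two inequalities separately. The first, \eqref{al-2}, is a classical perturbation bound for the orthogonal polar factor of a full-rank matrix, and can be invoked verbatim from Li's perturbation theorem. The second, \eqref{col-1}, follows by combining a Weyl-type singular value perturbation estimate with Lemma \ref{al-1} to bound the distance from the Euclidean mean $\bar X$ to the Stiefel manifold.

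For \eqref{al-2}, the key observation is that the IAM coincides with the orthogonal polar factor, i.e., $\bar X = \hat X H_X$ with $H_X := (\bar X^\top \bar X)^{1/2}$ symmetric positive semidefinite whenever $\bar X$ has full column rank; the same identity holds for $\bar Y = \hat Y H_Y$. Setting $E := \bar Y - \bar X = \hat Y H_Y - \hat X H_X$ and multiplying on the left and right by suitable combinations of $\hat X^\top$ and $\hat Y^\top$, one extracts a Sylvester-type identity that exploits the symmetry of $H_X, H_Y$ together with $\hat X^\top \hat X = \hat Y^\top \hat Y = I_r$. Inverting the resulting operator produces a denominator of the form $\lambda_{\min}(H_X + H_Y) \geq \sigma_r(\bar X) + \sigma_r(\bar Y)$, and a Frobenius-norm estimate of the remaining terms yields the factor $2\|\bar Y - \bar X\|_F$. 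Since this is a direct specialization of Theorem 2.4 in the cited paper, I would simply quote the result rather than rederive it.

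For \eqref{col-1}, it suffices to prove $\sigma_r(\bar X) \geq 1 - 2\delta_1^2$ (and symmetrically for $\bar Y$), because the sum is then at least $2(1-2\delta_1^2)$ and taking reciprocals delivers the claim. To establish the lower bound, I would exploit $\sigma_r(\hat X) = 1$ (since $\hat X$ lies on the Stiefel manifold) and apply Weyl's inequality
\[
\sigma_r(\bar X) \geq \sigma_r(\hat X) - \|\bar X - \hat X\|_2 \geq 1 - \|\bar X - \hat X\|_F.
\]
Lemma \ref{al-1} then bounds $\|\bar X - \hat X\|_F$ in terms of $\|\bold{X} - \bold{\hat X}\|_F^2$, and the hypothesis $\bold{X} \in \mathcal{S}_1$ gives $\|\bold{X} - \bold{\hat X}\|_F^2 \leq N\delta_1^2$. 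Combining these with the sizing relations $\delta_1 \leq \delta_2/(5\sqrt{r})$ and $\delta_2 \leq 1/6$ from Definition \ref{l-r} is what absorbs the stray $\sqrt{r}$ factor and pins the constant at $1 - 2\delta_1^2$.

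The main obstacle I anticipate is the bookkeeping in the second part: chaining the constants from Lemma \ref{al-1}, the Weyl estimate, and the definitional bounds in Definition \ref{l-r} so that the final factor is exactly $2\delta_1^2$ rather than some strictly weaker quantity. Apart from that, both parts are standard matrix-analysis perturbation arguments that do not require any manifold-specific machinery beyond the polar-decomposition identification of the IAM.
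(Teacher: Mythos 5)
The first inequality is handled exactly as the paper handles it: both you and the authors simply invoke the polar-factor perturbation theorem \cite[Th.2.4]{doi:10.1137/S0895479801394623} after identifying the IAM with the orthogonal polar factor of the Euclidean mean, so there is nothing to compare there. The genuine gap is in your proof of \eqref{col-1}. Your chain
\[
\sigma_r(\bar X)\;\ge\;1-\|\bar X-\hat X\|_F\;\ge\;1-\frac{2\sqrt{r}\,\|\bold{X}-\bold{\hat X}\|_F^2}{N}\;\ge\;1-2\sqrt{r}\,\delta_1^2
\]
only yields $\sigma_r(\bar X)+\sigma_r(\bar Y)\ge 2-4\sqrt{r}\,\delta_1^2$, whereas \eqref{col-1} requires $\sigma_r(\bar X)+\sigma_r(\bar Y)\ge 2-4\delta_1^2$. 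The stray $\sqrt{r}$ cannot be absorbed by the sizing relations of Definition \ref{l-r}: the needed inequality $2\sqrt{r}\,\delta_1^2\le 2\delta_1^2$ is false for every $r>1$ no matter how small $\delta_1$ is, because both sides are expressed in terms of the \emph{same} $\delta_1$. Substituting $\delta_1\le\delta_2/(5\sqrt{r})$ only shows $2\sqrt{r}\delta_1^2$ is small in absolute terms; it does not make it smaller than $2\delta_1^2$. So your argument proves the strictly weaker bound $\tfrac{2}{\sigma_r(\bar X)+\sigma_r(\bar Y)}\le\tfrac{1}{1-2\sqrt{r}\delta_1^2}$ --- you correctly flagged this bookkeeping obstacle but did not resolve it, and within the Weyl--plus--Lemma \ref{al-1} route it is not resolvable.

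What the cited \cite[Lem.12]{chen2021local} actually provides (and what the paper uses again in \ref{app_2}) is the dimension-free estimate $\sigma_r(\bar X)\ge 1-2\|\bold{X}-\bold{\hat X}\|_F^2/N$, obtained by bypassing Weyl and Lemma \ref{al-1} entirely and estimating $\sigma_r(\bar X)$ directly. For any unit vector $v\in\mathbb{R}^r$, since $\|X_iv\|=\|v\|=1$ for $X_i\in\text{St}(n,r)$,
\begin{align*}
\|\bar X v\|^2&=\frac{1}{N^2}\sum_{i,j} v^\top X_i^\top X_j v
=\frac{1}{N^2}\sum_{i,j}\Bigl(1-\tfrac{1}{2}\|(X_i-X_j)v\|^2\Bigr)\\
&\ge 1-\frac{1}{2N^2}\sum_{i,j}\|X_i-X_j\|_F^2
=1-\frac{1}{N}\sum_{i=1}^N\|X_i-\bar X\|_F^2\\
&\ge 1-\frac{1}{N}\|\bold{X}-\bold{\hat X}\|_F^2,
\end{align*}
where the last step uses that $\bar X$ minimizes $\sum_i\|X_i-Y\|_F^2$ over all of $\mathbb{R}^{n\times r}$. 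On $\mathcal{S}_1$ this gives $\sigma_r(\bar X)\ge\sqrt{1-\delta_1^2}\ge 1-\delta_1^2\ge 1-2\delta_1^2$, with no $\sqrt{r}$. Replacing your Weyl step with this computation closes the gap.
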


We also present the following lemma to estimate $\text{grad}\ h_t(\bold{X})$, where $  h_t(\bold{X})$ is defined in \eqref{p-3}.
\begin{lemma}\label{al-3}
	\cite[Lem.10]{chen2021local} For any $\bold{X} \in \text{St}(n,r)^N$, we have
	\begin{equation}\label{9-1}
	\|\sum_{i=1}^N \text{grad} \ h_{i,t}(\bold{X})\|_F \leq L_t\|\bold{X}- \bold{\hat X}\|_F^2,
	\end{equation}
	and \begin{equation}\label{9-2}
	\|\text{grad}\  h_t(\bold{X})\|_F\leq L_t	\|\bold{X}- \bold{\hat X}\|_F,
	\end{equation}
	where $L_t:=1-\sigma_N(W^t)$, and $\sigma_N(W^t)$ represents the smallest eigenvalue of $W^t$. Then it holds $L_t\in (0,2]$. Especially, if $\bold{X} \in \mathcal{S}_2$, we also have \begin{align*}
	\max_{i \in \mathcal{N}}\|\text{grad} \ h_{i,t}(\bold{X})\|_F \leq 2 \delta_2.
	\end{align*}
	where $\delta_2$ is defined in Definition \ref{l-r}.
\end{lemma}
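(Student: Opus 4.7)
The plan is to derive closed-form expressions for the Euclidean gradients of $h_{i,t}$ and $h_t$ on the product space $(\mathbb{R}^{n\times r})^N$, and then combine the tangent-space projection formula \eqref{1} with the spectral properties of $I_N - W^t$.

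For \eqref{9-2}, I would first compute $\nabla_{X_k}h_t(\bold{X}) = X_k - \sum_j W_{kj}^t X_j$, using the symmetry $W_{ij}^t = W_{ji}^t$ and $\sum_j W_{kj}^t = 1$ (Remark \ref{r-1}). Stacking yields $\nabla h_t(\bold{X}) = \bigl((I_N - W^t)\otimes I_n\bigr)\bold{X}$. Because $W^t$ is doubly stochastic, $\bigl((I_N - W^t)\otimes I_n\bigr)\bold{\hat X} = 0$, hence $\|\nabla h_t(\bold{X})\|_F \le \|I_N - W^t\|_2\,\|\bold{X} - \bold{\hat X}\|_F = L_t\,\|\bold{X} - \bold{\hat X}\|_F$. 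Applying the blockwise orthogonal projection $\mathcal{P}_{T_{X_k}\mathcal{M}}$, which is non-expansive in Frobenius norm, then yields \eqref{9-2}.

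For \eqref{9-1}, set $Y_i := \nabla_{X_i}h_{i,t}(\bold{X}) = \sum_j W_{ij}^t(X_i - X_j)$ and observe that $\sum_i Y_i = 0$ by double stochasticity. Applying \eqref{1} yields
\[
\sum_{i=1}^N \text{grad}\ h_{i,t}(\bold{X}) = -\tfrac{1}{2}\sum_{i=1}^N X_i\bigl(X_i^\top Y_i + Y_i^\top X_i\bigr).
\]
The crucial algebraic step is the Stiefel identity $2I_r - X_i^\top X_j - X_j^\top X_i = (X_i - X_j)^\top(X_i - X_j)$, which converts $X_i^\top Y_i + Y_i^\top X_i$ into $\sum_j W_{ij}^t(X_i - X_j)^\top(X_i - X_j)$. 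Combining $\|X_i\|_2 = 1$ with the elementary bound $\|A^\top A\|_F \le \|A\|_F^2$ gives $\|\sum_i \text{grad}\ h_{i,t}\|_F \le \tfrac{1}{2}\sum_{i,j} W_{ij}^t \|X_i - X_j\|_F^2$. Rewriting the right-hand side via the graph Laplacian as $2\,\text{tr}\bigl((\bold{X}-\bold{\hat X})^\top((I_N - W^t)\otimes I_n)(\bold{X}-\bold{\hat X})\bigr)$ and bounding by the largest eigenvalue $L_t$ of $I_N - W^t$ produces the desired quadratic estimate.

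The $\mathcal{S}_2$ bound is a direct triangle-inequality argument using non-expansiveness of $\mathcal{P}_{T_{X_i}\mathcal{M}}$: $\|\text{grad}\ h_{i,t}\|_F \le \sum_j W_{ij}^t\|X_i - X_j\|_F \le \sum_j W_{ij}^t(\|X_i - \hat X\|_F + \|X_j - \hat X\|_F) \le 2\delta_2$, since $\sum_j W_{ij}^t = 1$; and $L_t \in (0,2]$ follows from $\sigma_N(W^t) \in [-1,1)$ under Assumption \ref{assu-1}. The main obstacle is \eqref{9-1}: obtaining a \emph{quadratic} rather than linear bound in the consensus error is only possible because the Euclidean contribution cancels thanks to double stochasticity, so that the entire residual comes from the tangent-space projection, which through the Stiefel identity is intrinsically second order in $X_i - X_j$.
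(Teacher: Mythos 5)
The paper does not actually prove Lemma~\ref{al-3}; it is imported verbatim from \cite[Lem.~10]{chen2021local}, so there is no in-paper argument to compare against. Your proposal supplies a genuine self-contained proof, and its mechanism is the right one (and is essentially the one used in the cited reference): the Euclidean gradients $Y_i=\sum_j W_{ij}^t(X_i-X_j)$ sum to zero by double stochasticity, so $\sum_i\text{grad}\,h_{i,t}(\bold{X})$ consists only of the normal-space correction $-\tfrac12\sum_i X_i(X_i^\top Y_i+Y_i^\top X_i)$, which the identity $2I_r-X_i^\top X_j-X_j^\top X_i=(X_i-X_j)^\top(X_i-X_j)$ makes second order in the pairwise differences; meanwhile \eqref{9-2} follows from non-expansiveness of the tangent projection plus $\|I_N-W^t\|_2=L_t$, and the $\mathcal{S}_2$ bound and $L_t\in(0,2]$ are handled correctly.

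The one flaw is a factor-of-two bookkeeping error in the last step of \eqref{9-1}: the Laplacian identity is
\begin{equation*}
\tfrac12\sum_{i,j}W_{ij}^t\|X_i-X_j\|_F^2=\text{tr}\bigl((\bold{X}-\bold{\hat X})^\top((I_N-W^t)\otimes I_n)(\bold{X}-\bold{\hat X})\bigr),
\end{equation*}
not twice that quantity (the standard identity $\sum_{i,j}W_{ij}\|x_i-x_j\|^2=2x^\top(I-W)x$ already carries the $2$, which cancels against your leading $\tfrac12$). As written, your chain would deliver $2L_t\|\bold{X}-\bold{\hat X}\|_F^2$ rather than the stated $L_t\|\bold{X}-\bold{\hat X}\|_F^2$; with the identity corrected, the largest eigenvalue $L_t$ of $I_N-W^t$ gives exactly the claimed constant, so the slip is cosmetic rather than structural. (You may also replace $\bold{X}$ by $\bold{X}-\bold{\hat X}$ inside the quadratic form because $((I_N-W^t)\otimes I_n)\bold{\hat X}=0$ and the matrix is positive semidefinite, which you implicitly use and is worth stating.)
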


\begin{lemma}\label{al-6}
	\cite[Prop.4]{chen2021local} For any $\bold{X} = [X_1^\top, \dots, X_N^\top]^\top \in \text{St}(n,r)^N$, if $\bold{X} \in \mathcal{S}$, then the following inequality holds.
	\[\langle \bold{X}-\bold{\hat X}, \text{grad}\ h_t(\bold{X}) \rangle \geq \frac{\Phi}{2L_t} \|\text{grad}\ h_t(\bold{X})\|_F^2,\]
	where $\Phi \geq 1$ is a constant related to $\bold{X}$, which is defined as $\Phi:=2-\max_{i=1, \dots, N}\|X_i - \hat X\|_F^2$.
\end{lemma}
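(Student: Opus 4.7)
The plan is to obtain Euclidean co-coercivity for $h_t$ and then absorb the mismatch between the Euclidean and Riemannian gradients using the local-region bound on $\max_i\|X_i-\hat X\|_F$. First I would compute the Euclidean gradient $\nabla_{X_i} h_t(\bold{X}) = X_i - \sum_j W_{ij}^t X_j$, so $\nabla h_t(\bold{X}) = ((I-W^t)\otimes I_n)\bold{X}$. Since $W^t$ is doubly stochastic, $\nabla h_t(\bold{\hat X})=0$, i.e., $\bold{\hat X}$ is the Euclidean minimizer of the convex quadratic $h_t$, whose Hessian has spectral norm $L_t$. Standard co-coercivity of gradients of convex $L_t$-smooth functions yields
\[\langle \bold{X}-\bold{\hat X},\nabla h_t(\bold{X})\rangle \geq \tfrac{1}{L_t}\|\nabla h_t(\bold{X})\|_F^2,\]
and, as a quadratic vanishing with its gradient at $\bold{\hat X}$, the identity $2h_t(\bold{X}) = \langle \bold{X}-\bold{\hat X},\nabla h_t(\bold{X})\rangle$.

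Next, using $\mathcal{P}_{T_{X_i}\mathcal{M}}(Y)=Y-\tfrac{1}{2}X_i(X_i^\top Y+Y^\top X_i)$, I decompose $\nabla_{X_i} h_t = \text{grad}_{X_i} h_t + X_i S_i$ with symmetric $S_i$. A direct computation using $X_i^\top X_i = I_r$ and $X_i^\top X_j + X_j^\top X_i = 2I_r - (X_i-X_j)^\top(X_i-X_j)$ yields the concise PSD form
\[S_i = \tfrac{1}{2}\sum_j W_{ij}^t (X_i-X_j)^\top(X_i-X_j) \succeq 0,\]
from which $\sum_i \text{tr}(S_i) = 2h_t(\bold{X})$. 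Setting $D_i := X_i-\hat X$ and using $X_i^\top\hat X+\hat X^\top X_i = 2I_r - D_i^\top D_i$, a trace manipulation gives $\langle D_i, X_i S_i\rangle = \tfrac{1}{2}\text{tr}(S_i D_i^\top D_i)$. Summing over $i$ produces the key correction identity
\[\langle \bold{X}-\bold{\hat X},\nabla h_t\rangle = \langle \bold{X}-\bold{\hat X},\text{grad}\, h_t\rangle + \tfrac{1}{2}\sum_i\text{tr}(S_i D_i^\top D_i).\]

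To close the argument, since each $S_i\succeq 0$ I would use $\text{tr}(S_i D_i^\top D_i)\leq \|D_i\|_F^2\,\text{tr}(S_i) \leq \max_i\|D_i\|_F^2\,\text{tr}(S_i)$, so summing and invoking the quadratic identity yields the multiplicative bound $\tfrac{1}{2}\sum_i\text{tr}(S_i D_i^\top D_i) \leq \tfrac{\max_i\|D_i\|_F^2}{2}\langle \bold{X}-\bold{\hat X},\nabla h_t\rangle$. Rearranging then gives $\tfrac{\Phi}{2}\langle \bold{X}-\bold{\hat X},\nabla h_t\rangle \leq \langle \bold{X}-\bold{\hat X},\text{grad}\, h_t\rangle$. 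Combining this with the Euclidean co-coercivity and the Pythagorean bound $\|\nabla h_t\|_F^2 = \|\text{grad}\, h_t\|_F^2 + \sum_i\|S_i\|_F^2 \geq \|\text{grad}\, h_t\|_F^2$ delivers the claim. The local-region hypothesis $\max_i\|X_i-\hat X\|_F\leq\delta_2\leq 1/6$ ensures both $\Phi\geq 1$ and the positivity of $1-\tfrac{1}{2}\max_i\|D_i\|_F^2$ that the rearrangement requires.

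The main obstacle, I expect, is recognising that the correction term admits an \emph{exact multiplicative} absorption into the factor $\Phi/2$, via the quadratic identity $\sum_i\text{tr}(S_i) = \langle \bold{X}-\bold{\hat X},\nabla h_t\rangle$ coming from the fact that $h_t$ is a convex quadratic vanishing at $\bold{\hat X}$. A naive additive bound would only produce an inequality with extra slack terms and would fail to recover the sharp constant $\Phi/(2L_t)$; the PSD structure of $S_i$ together with this quadratic identity is precisely what makes the factor emerge cleanly.
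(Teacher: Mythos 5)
Your proof is correct, and it is worth noting that the paper itself offers no proof of this lemma at all --- it is imported verbatim as \cite[Prop.~4]{chen2021local} --- so what you have produced is a self-contained derivation of a result the authors only cite. Every step checks out: $\nabla h_t(\mathbf{X})=((I-W^t)\otimes I_n)\mathbf{X}$ with $\nabla h_t(\hat{\mathbf{X}})=0$, so co-coercivity of the convex quadratic with Hessian norm $L_t$ gives $\langle \mathbf{X}-\hat{\mathbf{X}},\nabla h_t\rangle\geq \tfrac{1}{L_t}\|\nabla h_t\|_F^2$; the normal component is $X_iS_i$ with $S_i=\tfrac12\sum_j W_{ij}^t(X_i-X_j)^\top(X_i-X_j)\succeq 0$, whence $\sum_i\operatorname{tr}(S_i)=2h_t(\mathbf{X})=\langle\mathbf{X}-\hat{\mathbf{X}},\nabla h_t\rangle$; the identity $\langle D_i,X_iS_i\rangle=\tfrac12\operatorname{tr}(S_iD_i^\top D_i)$ follows from $X_i^\top X_i=\hat X^\top\hat X=I_r$ and the symmetry of $S_i$; and the PSD trace bound $\operatorname{tr}(S_iD_i^\top D_i)\leq\|D_i\|_F^2\operatorname{tr}(S_i)$ turns the correction into the exact multiplicative factor $1-\tfrac12\max_i\|D_i\|_F^2=\Phi/2$, which is positive (indeed $\Phi>1$) on $\mathcal{S}_2$ since $\max_i\|X_i-\hat X\|_F\leq\delta_2\leq 1/6$. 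Your closing observation is also the right one: the quadratic identity $\sum_i\operatorname{tr}(S_i)=\langle\mathbf{X}-\hat{\mathbf{X}},\nabla h_t\rangle$ is precisely what lets the normal-space correction be absorbed multiplicatively rather than additively, which is how the sharp constant $\Phi/(2L_t)$ emerges; a reader of this paper who wants the lemma's provenance must otherwise chase the external reference.
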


In the following, we will introduce an iterative property for the proof of convergence.
\begin{lemma}\label{al-5}
	\cite[Lem.2]{7402509} Denote $\{u(k)\}_{k\geq 0}$ and $\{w(k)\}_{k \geq 0}$ as positive scalar sequences. Suppose that for any  $k \geq 0$, it holds $$u(k+1) \leq \gamma u(k)+w(k), $$where the parameter $\gamma \in(0,1)$. Let $\mathcal{U}(k)=\sum_{l=0}^k u(l)$ and $\Omega(k)=\sum_{l=0}^k w(l)$. Then it satisfies $$\mathcal{U}(k) \leq a \Omega(k)+b,$$where $a = \frac{2}{(1-\gamma)^2}$ and $b = \frac{2}{1-\gamma^2}u(0)$.
\end{lemma}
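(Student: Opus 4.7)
The plan is to sum the one-step recursion directly and then solve the resulting self-referential inequality for $\mathcal{U}(k)$, after which the constants are relaxed to match the statement. The key observation is that summing $u(k'+1) \leq \gamma u(k') + w(k')$ causes $\mathcal{U}(k)$ to appear on both sides with different coefficients, so a single rearrangement produces the bound without having to unroll the recursion into an explicit discrete convolution.

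First I would sum the recursion from $k' = 0$ to $k-1$. The left side equals $\sum_{k'=1}^{k} u(k') = \mathcal{U}(k) - u(0)$, and, using nonnegativity of the two sequences, the right side satisfies
\[
\gamma \sum_{k'=0}^{k-1} u(k') + \sum_{k'=0}^{k-1} w(k') \;\leq\; \gamma\,\mathcal{U}(k) + \Omega(k).
\]
Collecting terms yields $(1-\gamma)\mathcal{U}(k) \leq u(0) + \Omega(k)$, that is,
\[
\mathcal{U}(k) \;\leq\; \frac{u(0)}{1-\gamma} + \frac{\Omega(k)}{1-\gamma}.
\]

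To obtain the exact form stated in the lemma, I would then invoke two elementary relaxations valid for $\gamma \in (0,1)$: namely $\tfrac{1}{1-\gamma} \leq \tfrac{2}{(1-\gamma)^2}$ (since $1-\gamma < 2$) applied to the coefficient of $\Omega(k)$, and $\tfrac{1}{1-\gamma} = \tfrac{1+\gamma}{1-\gamma^2} \leq \tfrac{2}{1-\gamma^2}$ applied to the coefficient of $u(0)$. Substituting reproduces exactly $a = 2/(1-\gamma)^2$ and $b = 2u(0)/(1-\gamma^2)$. There is no real obstacle in this argument: it amounts to two lines of manipulation plus the constant relaxations, and the only care needed is in the summation-index bookkeeping and in noting that positivity of $\{u(k)\}$ and $\{w(k)\}$ legitimizes replacing the partial sums $\mathcal{U}(k-1)$ and $\Omega(k-1)$ by $\mathcal{U}(k)$ and $\Omega(k)$ throughout.
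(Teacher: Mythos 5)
Your proof is correct. Note first that the paper itself gives no proof of this lemma---it is imported verbatim as \cite[Lem.2]{7402509}---so there is no internal argument to compare against. Your ``sum and solve'' derivation is sound: summing the recursion over $k'=0,\dots,k-1$ gives $\mathcal{U}(k)-u(0)\leq \gamma\,\mathcal{U}(k-1)+\Omega(k-1)\leq \gamma\,\mathcal{U}(k)+\Omega(k)$ (the last step legitimately using positivity of both sequences), and rearranging yields $\mathcal{U}(k)\leq \frac{1}{1-\gamma}\Omega(k)+\frac{1}{1-\gamma}u(0)$, which is in fact \emph{sharper} than the stated bound; your two relaxations $\frac{1}{1-\gamma}\leq\frac{2}{(1-\gamma)^2}$ and $\frac{1}{1-\gamma}=\frac{1+\gamma}{1-\gamma^2}\leq\frac{2}{1-\gamma^2}$ are both valid for $\gamma\in(0,1)$ and recover the constants $a$ and $b$ exactly. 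This is also more economical than the standard route (unrolling $u(k)\leq\gamma^k u(0)+\sum_{l=0}^{k-1}\gamma^{k-1-l}w(l)$ and exchanging the order of summation), which arrives at the same $\frac{1}{1-\gamma}$ coefficients with more bookkeeping. The only point worth a sentence in a final write-up is the base case $k=0$, where the sum is empty and the inequality $\mathcal{U}(0)\leq \frac{u(0)}{1-\gamma}$ holds trivially. One caution unrelated to the correctness of your proof: in the paper the lemma is invoked to pass from the norm recursion \eqref{1-5} to the \emph{squared}-norm bound \eqref{l-2}, so the version actually needed there is the squared variant of this statement; your argument proves the statement exactly as written, which is what was asked.
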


\subsection{Preliminary Results}
In this part, we   establish  some preliminary  results that will be used in the convergence and rate analysis of Algorithm \ref{alg:1}.

Define the vectorized form of the variable-sampled stochastic gradient in \eqref{v-s}, and its average among all agents as
	\begin{align*}
		&F(\bold{X}_k) = [F_1(X_{1,k})^\top, F_2(X_{2,k})^\top, \cdots, F_N(X_{N,k})^\top]^\top, \\
		&\bar F_k := \frac{1}{N}\sum_{i=1}^N F_i(X_{i,k}),\bar{\bm{F}}_k:=(\bm{1}_N \otimes I_n) \bar F_k.
	\end{align*}
	Denote the averaged gradient and the averaged estimate of the gradient across the network by
	\begin{align*}
	&g(\bold{X}_k) = [\text{grad}f_1(X_{1,k})^\top, \text{grad} f_2(X_{2,k})^\top, \cdots, \text{grad} f_N(X_{N,k})^\top]^\top, \\
	&\bar g_k := \frac{1}{N}\sum_{i=1}^N \text{grad} f_i(X_{i,k});\\
	&\bold{Y}_k = [Y_{1,k}^\top,Y_{2,k}^\top, \cdots, Y_{N,k}^\top]^\top,  \bar Y_k := \frac{1}{N}\sum_{i=1}^N Y_{i,k};\\
	&\bold{v}_{k} =[v_{1,k}^\top,\dots,v_{N,k}^\top]^\top ,  \bar v_k := \frac{1}{N}\sum_{i=1}^N v_{i,k}.
	\end{align*}
	
	According to \eqref{rg}, we have $-\text{grad}\ h_{i,t}(\bold{X}_k)= - \mathcal{P}_{T_{X_{i,k}} \mathcal{M}}(\nabla h_{i,t}(\bold{X}_k)) = \mathcal{P}_{T_{X_{i,k}} \mathcal{M}}(\sum_{j=1}^N W_{ij}^t X_{j,k})$. Consider the update step \eqref{4} in Algorithm \ref{alg:1}, which can be rewritten as \begin{equation}\label{re-u}
	X_{i,k+1} = \mathcal{R}_{X_{i,k}}(-\alpha \text{grad}\ h_{i,t}(\bold{X}_k)-\beta v_{i,k}).
	\end{equation}
	
	Similar to the gradient tracking algorithm \cite{qu2017harnessing} in Euclidean space, we also have the following result on the Stiefel manifold, which shows that $\{\bold{Y}_k\}$ tracks the average of sampled gradients.
	\begin{lemma}\label{gt}
		Suppose Assumption \ref{assu-1} holds, we have
		$$\frac{1}{N}\sum_{i=1}^N Y_{i,k} = \frac{1}{N}\sum_{i=1}^N F_i(X_{i,k}),\ i.e.,  \ \bar Y_k = \bar F_k.$$
	\end{lemma}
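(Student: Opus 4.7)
The plan is to prove the identity by induction on the iteration index $k$, exploiting the doubly-stochastic structure of the weight matrix guaranteed by Assumption \ref{assu-1} (and its powers, as noted in Remark \ref{r-1}).

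For the base case $k=0$, the initialization in Algorithm \ref{alg:1} sets $Y_{i,0} = F_i(X_{i,0})$ for every agent $i$, so averaging over $i$ immediately yields $\bar Y_0 = \bar F_0$. For the inductive step, I would assume $\sum_{i=1}^N Y_{i,k} = \sum_{i=1}^N F_i(X_{i,k})$ and then sum the update rule \eqref{g-t} over $i$:
\begin{equation*}
\sum_{i=1}^N Y_{i,k+1} = \sum_{i=1}^N \sum_{j=1}^N W_{ij}^t Y_{j,k} + \sum_{i=1}^N F_i(X_{i,k+1}) - \sum_{i=1}^N F_i(X_{i,k}).
\end{equation*}
The key observation is that by Assumption \ref{assu-1} and Remark \ref{r-1}, the matrix $W^t$ is doubly stochastic, so interchanging the order of summation gives $\sum_{i} \sum_{j} W_{ij}^t Y_{j,k} = \sum_{j} \left(\sum_{i} W_{ij}^t\right) Y_{j,k} = \sum_{j} Y_{j,k}$. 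Substituting this back and invoking the inductive hypothesis cancels the tracking correction $\sum_i F_i(X_{i,k})$ against $\sum_j Y_{j,k}$, leaving $\sum_{i} Y_{i,k+1} = \sum_{i} F_i(X_{i,k+1})$. Dividing by $N$ completes the induction.

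There is no serious obstacle here: the argument is the standard ``conservation of averaged gradient mass'' property of gradient tracking schemes, and it carries over unchanged to the Stiefel-manifold setting because the update \eqref{g-t} lives in the ambient Euclidean space $\mathbb{R}^{n\times r}$ (the tangent-space projection is applied separately in \eqref{v} and does not enter this identity). The only subtlety worth stating explicitly is that one must apply column-stochasticity of $W^t$ rather than row-stochasticity when interchanging the summations, which is why symmetry of $W$ (and hence of $W^t$) in Assumption \ref{assu-1} is invoked.
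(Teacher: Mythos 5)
Your proof is correct and is exactly the standard induction argument the paper relies on (the paper states this lemma without an explicit proof, appealing to the Euclidean gradient-tracking literature): the base case follows from the initialization $Y_{i,0}=F_i(X_{i,0})$ and the inductive step from the column-stochasticity $\sum_{i}W_{ij}^t=1$ guaranteed by Assumption \ref{assu-1} and Remark \ref{r-1}. Your closing observation — that the identity lives entirely in the ambient space $\mathbb{R}^{n\times r}$ and is unaffected by the tangent-space projection in \eqref{v} — is precisely the reason the Euclidean argument transfers unchanged to the Stiefel setting.
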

	
	Next, we give the iteration relations of gradient tracking error and consensus error. The proof is given in \ref{app_1}.
	\begin{lemma}\label{lem-5}
		Suppose Assumptions \ref{assu-1}-\ref{assu-5} hold. Consider Algorithm \ref{alg:1}, where the step size  satisfies $\alpha \leq \frac{\Phi}{L_t}$ with $\Phi $ and $L_t$ defined in Lemma \ref{al-6} and Lemma \ref{al-3}, respectively. Then we can derive the following results.
		
		(i) Accumulated gradient tracking error:
			\begin{align}\label{l-2}
			&\frac{1}{N}\sum_{k=0}^KE[\|\bold{Y}_k-\bar{\bold{F}}_k\|_F^2|\mathcal{F}_k]\notag\\
			\leq&\Gamma_2\frac{1}{N}\sum_{k=0}^KE[\|F(\bold{X}_{k+1})-F(\bold{X}_k)\|_F^2|\mathcal{F}_k]+\Gamma_3,
			\end{align}
			where $\Gamma_2 = \frac{2}{(1-\sigma_2^t)^2}$, and $\Gamma_3 = \frac{2}{1-\sigma_2^{2t}}\cdot \frac{1}{n}\|\bold{Y}_0 - \bar{\bold{F}}_0\|_F^2$, here $\sigma_2$ is the second largest singular value of $W$ according to Remark \ref{r-1};
		
		(ii) Consensus error: If $\bold{X}_k \in \mathcal{S}$, then
			\begin{equation}\label{l-3}
			\|\bold{X}_{k+1}-\bold{\hat X}_{k+1}\|_F \leq \rho_t \|\bold{X}_k-\bold{\hat X}_k\|_F+\beta\|\bold{Y}_k\|_F,
			\end{equation}
			where $\rho_t:=\sqrt{1-(\alpha L_t \Phi - \alpha^2 L_t^2)}$. Moreover, we have the following inequality with $\Gamma_0 = \frac{2}{(1-\rho_t)^2}$, and $\Gamma_1 = \frac{2}{1-\rho_t^2}\|\bold{X}_0-\bold{\hat X}_0\|_F^2$:
			\begin{equation}\label{l-4}
			\frac{1}{N}\sum_{k=0}^K\|\bold{X}_k-\bold{\hat X}_k\|_F^2 \leq \Gamma_0 \frac{1}{N}\beta^2\sum_{k=0}^K \|\bold{Y}_k\|_F^2+\Gamma_1.
			\end{equation}
	\end{lemma}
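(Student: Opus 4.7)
\textit{Part (i), tracking error.} The plan is to derive a linear recursion for the tracking mismatch $\bold{Y}_k - \bar{\bold{F}}_k$ and then invoke Lemma \ref{al-5}. Stacking \eqref{g-t} gives $\bold{Y}_{k+1} = (W^t \otimes I_n)\bold{Y}_k + F(\bold{X}_{k+1}) - F(\bold{X}_k)$. Multiplying through by $J := \tfrac{1}{N}\bm{1}_N\bm{1}_N^\top \otimes I_n$ and using that $W^t$ is doubly stochastic (Remark \ref{r-1}) yields $\bar{\bold{Y}}_{k+1} = \bar{\bold{Y}}_k + J(F(\bold{X}_{k+1}) - F(\bold{X}_k))$, which combined with the initialization $Y_{i,0} = F_i(X_{i,0})$ and Lemma \ref{gt} gives $\bar{\bold{Y}}_k \equiv \bar{\bold{F}}_k$. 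Subtracting and using that $(W^t \otimes I_n - J)\bar{\bold{F}}_k = 0$, I obtain
\begin{equation*}
\bold{Y}_{k+1} - \bar{\bold{F}}_{k+1} = (W^t \otimes I_n - J)(\bold{Y}_k - \bar{\bold{F}}_k) + (I-J)\bigl(F(\bold{X}_{k+1}) - F(\bold{X}_k)\bigr).
\end{equation*}
By Remark \ref{r-1}, $\|W^t - \tfrac{1}{N}\bm{1}_N\bm{1}_N^\top\|_2 \leq \sigma_2^t$ and $\|I-J\|_2 = 1$, so taking Frobenius norms gives the linear recursion $\|\bold{Y}_{k+1} - \bar{\bold{F}}_{k+1}\|_F \leq \sigma_2^t \|\bold{Y}_k - \bar{\bold{F}}_k\|_F + \|F(\bold{X}_{k+1}) - F(\bold{X}_k)\|_F$. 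Squaring via the Young-type inequality $(a+b)^2 \leq (1+\lambda)a^2 + (1+\lambda^{-1})b^2$ with $\lambda$ tuned so that $(1+\lambda)\sigma_2^{2t} = \sigma_2^t$ produces a squared recursion with contraction factor $\sigma_2^t$. Dividing by $N$, taking conditional expectations and applying Lemma \ref{al-5} with $\gamma = \sigma_2^t$ delivers \eqref{l-2} with the stated $\Gamma_2,\Gamma_3$.

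\textit{Part (ii), consensus error.} My plan is to dominate $\|\bold{X}_{k+1} - \bold{\hat X}_{k+1}\|_F$ by a single-step ``consensus residual'' plus a tracking correction. By the IAM variational characterization \eqref{iam}, the feasibility of $\hat X_k \in \text{St}(n,r)$ yields $\|\bold{X}_{k+1} - \bold{\hat X}_{k+1}\|_F^2 \leq \sum_i \|X_{i,k+1} - \hat X_k\|_F^2$. Applying the polar-retraction non-expansiveness of Lemma \ref{lem-1} to \eqref{re-u} with $Y = \hat X_k$ gives $\|X_{i,k+1} - \hat X_k\|_F \leq \|X_{i,k} - \alpha\,\text{grad}\,h_{i,t}(\bold{X}_k) - \beta v_{i,k} - \hat X_k\|_F$. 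Summing, square-rooting and applying the triangle inequality produces
\begin{equation*}
\|\bold{X}_{k+1} - \bold{\hat X}_{k+1}\|_F \leq \|\bold{X}_k - \alpha\,\text{grad}\,h_t(\bold{X}_k) - \bold{\hat X}_k\|_F + \beta \|\bold{v}_k\|_F.
\end{equation*}
The first piece I contract by expanding the square, applying the coercivity bound of Lemma \ref{al-6} to eliminate the cross term, and using the upper bound of Lemma \ref{al-3} together with $\bold{X}_k \in \mathcal{S}$ to convert the remaining $\|\text{grad}\,h_t\|_F^2$ into a multiple of $\|\bold{X}_k - \bold{\hat X}_k\|_F^2$, yielding the contraction constant $\rho_t^2 = 1 - (\alpha L_t \Phi - \alpha^2 L_t^2)$ whenever $\alpha \leq \Phi/L_t$. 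Since the tangent-space projection in \eqref{v} is non-expansive, $\|\bold{v}_k\|_F \leq \|\bold{Y}_k\|_F$, giving \eqref{l-3}. For \eqref{l-4}, I would square \eqref{l-3} via Young's inequality (with parameter tuned to preserve the contraction $\rho_t$) and invoke Lemma \ref{al-5} with $\gamma = \rho_t$ applied to $u_k = \|\bold{X}_k - \bold{\hat X}_k\|_F^2$ and $w_k \propto \beta^2 \|\bold{Y}_k\|_F^2$; dividing by $N$ yields the stated $\Gamma_0,\Gamma_1$.

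\textit{Main obstacle.} The technical crux is establishing the strict per-step contraction $\rho_t < 1$ for the consensus descent on the Stiefel manifold. The difficulty is asymmetric information on $\|\text{grad}\,h_t\|_F$: Lemma \ref{al-3} only supplies an upper bound of the form $L_t \|\bold{X}_k - \bold{\hat X}_k\|_F$, whereas a naive expansion of $\|\bold{X}_k - \alpha\,\text{grad}\,h_t - \bold{\hat X}_k\|_F^2$ together with Lemma \ref{al-6} only shows non-expansion. Bridging this gap requires the local-region machinery of Definition \ref{l-r}: in $\mathcal{S}$ the IAM stays close to the Euclidean mean (via Lemma \ref{al-1}), the factor $\Phi \geq 1$ is controlled, and the consensus potential $h_t$ behaves like a locally strongly-convex objective along the iteration direction, reproducing the chain of inequalities from \cite{chen2021local} that yields the stated $\rho_t$. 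The handling of the stochastic filtration in Part (i) and the careful choice of Young's parameter in both parts so as to match the constants $\Gamma_0,\ldots,\Gamma_3$ are the remaining, more routine, details.
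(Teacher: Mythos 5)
Your proposal follows essentially the same route as the paper's proof in both parts: the same linear recursion for $\bold{Y}_k-\bar{\bold{F}}_k$ via double stochasticity and Lemma \ref{gt} with contraction factor $\sigma_2^t$, and the same chain (IAM minimality, polar-retraction non-expansiveness, Lemma \ref{al-6} plus Lemma \ref{al-3} for the contraction $\rho_t$, projection non-expansiveness for $\|\bold{v}_k\|_F\leq\|\bold{Y}_k\|_F$) for the consensus error, followed in both cases by Lemma \ref{al-5}. The only deviation is your intermediate Young-inequality squaring before invoking Lemma \ref{al-5}; the paper applies that lemma directly to the linear (unsquared) recursions, reading it as the sum-of-squares estimate from its cited source, so your extra step is redundant and would perturb the constants $\Gamma_2,\Gamma_0$ by a harmless factor, but the argument is otherwise the same.
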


	The following lemma shows  that the iterates $\bold{X}_{k}$ will always stay in the set $\mathcal{S}$ when the initial points start in $\mathcal{S}$, for which the  proof is given in \ref{app_2}.
	\begin{lemma}\label{lem-3}
		Suppose Assumptions \ref{assu-1}-\ref{assu-3} hold. 
Consider Algorithm \ref{alg:1},  where $t \geq [\log_{\sigma_2}(\frac{1}{2\sqrt{N}})]$,  the step sizes $\alpha$ satisfies $0< \alpha \leq \min\{1,\frac{1}{M},\frac{\Phi}{L_t}\}$ with $\Phi $ and $L_t$ defined in Lemma \ref{al-6} and Lemma \ref{al-3}, and $\beta$ satisfies $0 \leq \beta \leq \bar \beta:= \min\{\frac{1-\rho_t}{10A+L_G}\delta_1,\frac{\alpha \delta_1}{5(10A+L_G)}\}$ with $L_G$ defined in Lemma \ref{al-4}. If the initial point  satisfies $\bold{X}_0 \in \mathcal{S}$, then for $\forall k \geq 0$, we have
		\begin{itemize}
			\item[(i)] $\bold{X}_k \in \mathcal{S}$;
			\item[(ii)] $Y_{i,k}$ is uniformly bounded, i.e., $\|Y_{i,k}\|_F \leq 10A+L_G$, $\forall i \in \mathcal{N}$.
		\end{itemize}
	\end{lemma}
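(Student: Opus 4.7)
The plan is to establish the two claims jointly by strong induction on $k$. The base case $k=0$ is immediate: $\bold{X}_0 \in \mathcal{S}$ holds by assumption, and since $Y_{i,0} = F_i(X_{i,0})$ is an average of sampled Riemannian gradients each of norm at most $A$ (Assumption \ref{assu-3}), we get $\|Y_{i,0}\|_F \leq A \leq 10A + L_G$ with room to spare. For the inductive step, we assume both (i) and (ii) hold through index $k$, and close them at $k+1$ by first deducing (i) and then using (i) at $k+1$ to prove (ii) at $k+1$.

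To show $\bold{X}_{k+1}\in\mathcal{S}_1$, I would plug the inductive estimates $\|\bold{X}_k-\bold{\hat X}_k\|_F\le \sqrt{N}\delta_1$ and $\|\bold{Y}_k\|_F\le \sqrt{N}(10A+L_G)$ into the consensus recursion \eqref{l-3} from Lemma \ref{lem-5} and use the step-size bound $\beta\le (1-\rho_t)\delta_1/(10A+L_G)$ to conclude $\|\bold{X}_{k+1}-\bold{\hat X}_{k+1}\|_F\le\sqrt{N}\delta_1$. The harder part is $\bold{X}_{k+1}\in\mathcal{S}_2$. Here I would use the non-expansiveness of the polar retraction (Lemma \ref{lem-1}) with $Y=\hat X_k$ to get $\|X_{i,k+1}-\hat X_k\|_F\le \|X_{i,k}-\hat X_k\|_F+\alpha\|\text{grad}\,h_{i,t}(\bold{X}_k)\|_F+\beta\|v_{i,k}\|_F$, then bound the potential-gradient term by $2\delta_2$ via Lemma \ref{al-3} (using $\bold{X}_k\in\mathcal{S}_2$) and $\|v_{i,k}\|_F\le\|Y_{i,k}\|_F\le 10A+L_G$ by orthogonal projection and the inductive hypothesis. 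To convert this into a bound on $\|X_{i,k+1}-\hat X_{k+1}\|_F$, I would invoke Lemma \ref{pr} (with \eqref{col-1}, since both configurations lie in $\mathcal{S}_1$) to bound $\|\hat X_{k+1}-\hat X_k\|_F$ by $\frac{1}{1-2\delta_1^2}\|\bar X_{k+1}-\bar X_k\|_F$, where $\|\bar X_{k+1}-\bar X_k\|_F$ is controlled by averaging the per-agent displacements just estimated. The second step-size condition $\beta\le \alpha\delta_1/(5(10A+L_G))$ together with the geometric relation $\delta_1\le \delta_2/(5\sqrt{r})$ and $\delta_2\le 1/6$ is exactly what is needed so that the triangle-inequality bound closes at $\delta_2$.

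For claim (ii) at $k+1$, I would unroll the gradient-tracking update $Y_{i,k+1}=\sum_j W_{ij}^t Y_{j,k}+F_i(X_{i,k+1})-F_i(X_{i,k})$ and exploit the decomposition $(W^t)^m=\frac{1}{N}\bm{1}_N\bm{1}_N^\top+(W^t-\frac{1}{N}\bm{1}_N\bm{1}_N^\top)^m$ for $m\ge 1$. The consensus components telescope into $\bar F_k$, whose Frobenius norm is at most $A$ by Assumption \ref{assu-3}. The orthogonal components are damped by $\sigma_2^{tm}$, and with the choice $t\ge \lceil\log_{\sigma_2}(1/(2\sqrt{N}))\rceil$ the resulting geometric series is bounded by a small multiple of $A$ even after multiplying by the $\sqrt{N}$ factor introduced by the increments $\|F(\bold{X}_{l+1})-F(\bold{X}_l)\|_F\le 2A\sqrt{N}$. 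Adding the terminal stochastic gradient difference $\|F_i(X_{i,k+1})-F_i(X_{i,k})\|_F\le 2A$ (which could alternatively be split into a smoothness piece of order $L_G\|X_{i,k+1}-X_{i,k}\|_F$ plus stochastic noise, yielding the $L_G$ slack in the constant) keeps $\|Y_{i,k+1}\|_F$ comfortably inside $10A+L_G$.

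The main obstacle I expect is step (i.b), namely ensuring that the individual deviations $\|X_{i,k+1}-\hat X_{k+1}\|_F$ do not exceed $\delta_2$. In contrast with Euclidean consensus, the IAM $\hat X_{k+1}$ is only implicitly defined as a polar projection of the Euclidean mean, and one cannot control its drift without going through Lemma \ref{pr} and the requirement that both $\bold{X}_k$ and $\bold{X}_{k+1}$ lie in $\mathcal{S}_1$. Moreover, (i) and (ii) are genuinely coupled: the constant $10A+L_G$ appearing in the step-size conditions is chosen precisely so that the bound on $\|Y_{i,k}\|_F$ inherited from (ii) closes the consensus estimate needed for (i.a), while $\bold{X}_k\in\mathcal{S}$ from (i) is what makes Lemma \ref{al-3} and the recursion \eqref{l-3} applicable in the first place. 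Keeping the constants consistent across the two arms of the induction will be the delicate bookkeeping step.
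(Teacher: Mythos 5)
Your overall architecture matches the paper: a joint induction in which $\bold{X}_{k+1}\in\mathcal{S}_1$ follows from the recursion \eqref{l-3} together with $\beta\le\frac{1-\rho_t}{10A+L_G}\delta_1$ and the inductive bound on $\|\bold{Y}_k\|_F$, and in which the uniform bound on $Y_{i,k}$ feeds back into the consensus step. Your argument for claim (ii) by unrolling the tracking recursion and splitting off the undamped increment is a legitimate alternative to the paper's one-step recursion on $\|Y_{i,k}-\bar F_{k-1}\|_F$; with $\sigma_2^t\le\frac{1}{2\sqrt N}$ the geometric tail contributes only $O(A)$ after the $\sqrt N$ factor, so that part closes. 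However, there is a genuine gap in your treatment of $\bold{X}_{k+1}\in\mathcal{S}_2$. You propose the triangle-inequality bound $\|X_{i,k+1}-\hat X_k\|_F\le\|X_{i,k}-\hat X_k\|_F+\alpha\|\text{grad}\,h_{i,t}(\bold{X}_k)\|_F+\beta\|v_{i,k}\|_F\le\delta_2+2\alpha\delta_2+\frac{\alpha\delta_1}{5}$, invoking Lemma \ref{al-3} for the middle term. This already exceeds $\delta_2$ by $2\alpha\delta_2>0$ before the nonnegative drift $\|\hat X_{k+1}-\hat X_k\|_F$ is even added, and the excess involves neither $\beta$, $\delta_1$, nor $r$, so no tuning of the step sizes or of the relation $\delta_1\le\delta_2/(5\sqrt r)$ can make the induction close. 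The triangle inequality discards precisely the cancellation that makes this step work.

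The missing idea is that $-\alpha\,\text{grad}\,h_{i,t}(\bold{X}_k)$ is not a generic perturbation of size $2\alpha\delta_2$ but a contraction of $X_{i,k}$ toward the weighted average. Using the explicit projection formula \eqref{1} one writes
\begin{align*}
X_{i,k}-\alpha\,\text{grad}\,h_{i,t}(\bold{X}_k)-\hat X_k
=&\,(1-\alpha)(X_{i,k}-\hat X_k)+\alpha(\bar X_k-\hat X_k)
+\alpha\sum_{j=1}^N\Bigl(W_{ij}^t-\tfrac1N\Bigr)X_{j,k}\\
+&\,\tfrac{\alpha}{2}X_{i,k}\sum_{j=1}^N W_{ij}^t(X_{i,k}-X_{j,k})^\top(X_{i,k}-X_{j,k}),
\end{align*}
whose norm is at most $(1-\alpha)\delta_2+2\alpha\delta_1^2\sqrt r+\frac{\alpha}{2}\delta_2+2\alpha\delta_2^2$; the third term is where $\sigma_2^t\le\frac{1}{2\sqrt N}$ (hence the multi-step communication) enters, via $\sum_j|W_{ij}^t-\frac1N|\le\sqrt N\sigma_2^t\le\frac12$. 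The resulting leading factor $(1-\frac{\alpha}{2})\delta_2$ leaves a margin of $\frac{\alpha}{2}\delta_2$ that must absorb the quadratic terms, the contribution $\beta\|v_{i,k}\|_F\le\frac{\alpha\delta_1}{5}$, and the IAM drift $\|\hat X_{k+1}-\hat X_k\|_F$ controlled through Lemma \ref{pr} and \eqref{col-1}; this is exactly where $\delta_1\le\frac{\delta_2}{5\sqrt r}$ and $\delta_2\le\frac16$ are consumed. Without extracting this contraction your inductive step for $\mathcal{S}_2$ fails, and since $\mathcal{S}=\mathcal{S}_1\cap\mathcal{S}_2$ the whole induction collapses.
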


	Before giving the convergence theorem, we introduce a sufficiently descent lemma in advance, which shows the relationship between the function values at $k$ and $k+1$.
	\begin{lemma}\label{lem-4}
		Suppose Assumption \ref{assu-1}-\ref{assu-3} hold. Consider Algorithm \ref{alg:1},  where  $\bold{X}_0 \in \mathcal{S}$  and the multi-steps of communication $t \geq [\log_{\sigma_2}(\frac{1}{2\sqrt{N}})]$. The step size $\alpha$ satisfies $0< \alpha \leq \min\{1,\frac{1}{M},\frac{\Phi}{L_t}\}$, and step size $\beta$ satisfies $0 \leq \beta \leq \min\{\bar \beta,\frac{1}{9L_G}\},$ where $\bar \beta$ is given in Lemma \ref{lem-3}.  Denote by $\hat X_{k+1}$ the IAM in \eqref{iam} and by $\bold{\hat X}: = (\bm{1}_N \otimes I_n) \hat X$. Then we have
		\begin{equation*}
		\begin{aligned}
		E[f(\hat X_{k+1})|\mathcal{F}_k] \leq & f(\hat X_k)-\frac{\beta}{2}\|\bar g_k\|_F^2+\frac{D_1}{N}\beta^2E[\|\bold{Y}_k\|_F^2|\mathcal{F}_k]\\
		+&\frac{D_2}{N}\|\bold{X}_k-\bold{\hat X}_k\|_F^2+\frac{\sigma^2D_3}{ N_k}+\frac{D_4}{N}\|\bold{ X}_{k+1}-\hat{\bold{X}}_{k+1}\|_F^2,
		\end{aligned}
		\end{equation*}
		where $D_1 = (\delta_1^2+5)L_G+2MA$ with $M$ given by Lemma \ref{lem-1}, $ D_3 = \frac{1+4 L_G^2 \beta^2}{2L_G},$ $D_2 = 3L_G+2\alpha A+13L_G\tilde C \alpha^2+D_4$ with $\tilde C = \frac{\delta_2^2}{r }$, $L_G$ given by Lemma \ref{al-4}, $A$ defined in Remark 4, and $ D_4 = \frac{4r \delta_1^2}{\beta^2L_G}$.
	\end{lemma}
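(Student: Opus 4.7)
\textbf{Proof plan for Lemma \ref{lem-4}.}

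The plan is to obtain a descent bound for $f$ along the trajectory of induced arithmetic means $\hat X_k \to \hat X_{k+1}$ by combining the manifold smoothness inequality with a careful expansion of $\hat X_{k+1}-\hat X_k$ through the Euclidean mean. First I would invoke the Lipschitz-type inequality \eqref{smooth} of Lemma \ref{al-4} (applied with constant $L_g$ to $f$, which is $L$-smooth by Remark~\ref{re-4}) at $X=\hat X_k$ and $Y=\hat X_{k+1}$, producing the upper bound $f(\hat X_k)+\langle\operatorname{grad} f(\hat X_k),\hat X_{k+1}-\hat X_k\rangle+\tfrac{L_g}{2}\|\hat X_{k+1}-\hat X_k\|_F^{2}$. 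To turn this into a tractable quantity I would transfer everything to the Euclidean mean: by Lemma \ref{pr} (using $\bold{X}_k,\bold{X}_{k+1}\in \mathcal S_1$, which is guaranteed by Lemma \ref{lem-3}), $\|\hat X_{k+1}-\hat X_k\|_F\le \tfrac{1}{1-2\delta_1^{2}}\|\bar X_{k+1}-\bar X_k\|_F$, and $\hat X_{k+1}-\hat X_k$ differs from $\bar X_{k+1}-\bar X_k$ by $(\hat X_{k+1}-\bar X_{k+1})-(\hat X_k-\bar X_k)$, each of whose norms is controlled by Lemma \ref{al-1} as a quadratic consensus term $\tfrac{2\sqrt r}{N}\|\bold X_{\bullet}-\bold{\hat X}_{\bullet}\|_F^{2}$. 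These two quadratic consensus terms are precisely what produce the contributions with coefficients proportional to $D_4$ and part of $D_2$.

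Next I would expand $\bar X_{k+1}-\bar X_k$ using the update \eqref{re-u} and the retraction bound \eqref{Rx} from Lemma \ref{lem-1}: agent-wise, $X_{i,k+1}=X_{i,k}-\alpha\operatorname{grad} h_{i,t}(\bold X_k)-\beta v_{i,k}+e_{i,k}$ with $\|e_{i,k}\|_F\le M\|\alpha\operatorname{grad} h_{i,t}(\bold X_k)+\beta v_{i,k}\|_F^{2}$. Averaging gives $\bar X_{k+1}-\bar X_k=-\tfrac{\alpha}{N}\sum_i\operatorname{grad} h_{i,t}(\bold X_k)-\beta\bar v_k+\tfrac1N\sum_i e_{i,k}$, where by Lemma \ref{al-3} the first sum is quadratic in the consensus error ($\|\cdot\|_F\le L_t\|\bold X_k-\bold{\hat X}_k\|_F^{2}$) and the retraction error is bounded quadratically in $(\alpha,\beta)$ using Lemma \ref{lem-3}(ii). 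The projection piece $\bar v_k-\bar Y_k=\tfrac1N\sum_i(\mathcal P_{T_{X_{i,k}}\mathcal M}-I)Y_{i,k}$ is handled by the explicit formula \eqref{1}, which yields $\|Y_{i,k}-\mathcal P_{T_{X_{i,k}}\mathcal M}Y_{i,k}\|_F\le \|X_{i,k}\|_2\|X_{i,k}^{\top}Y_{i,k}\|_F$; since $X_{i,k}^{\top}Y_{i,k}$ can be rewritten using consensus to $\hat X_k$, this introduces additional $\|\bold X_k-\bold{\hat X}_k\|_F$ factors that are absorbed into the $D_2$ term. After these substitutions the dominant linear-in-$\beta$ contribution in the inner product becomes $-\beta\langle\operatorname{grad} f(\hat X_k),\bar Y_k\rangle$, and Lemma \ref{gt} rewrites $\bar Y_k=\bar F_k$.

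The stochastic step is then to split $\bar F_k=\bar g_k+(\bar F_k-\bar g_k)$ and take $\mathbb E[\cdot\mid\mathcal F_k]$: by \eqref{oracle} the cross term vanishes and the variance contributes $\sigma^{2}/(NN_k)$, which after multiplication by the $\tfrac{1+4L_G^{2}\beta^{2}}{2L_G}$-type Young constant used on the smoothness remainder produces the $\tfrac{\sigma^{2}D_3}{N_k}$ term. To convert $-\beta\langle\operatorname{grad} f(\hat X_k),\bar g_k\rangle$ into the claimed $-\tfrac{\beta}{2}\|\bar g_k\|_F^{2}$, I would use the polarization identity $-\langle a,b\rangle=\tfrac12\|a-b\|_F^{2}-\tfrac12\|a\|_F^{2}-\tfrac12\|b\|_F^{2}$ with $a=\operatorname{grad} f(\hat X_k)$, $b=\bar g_k$, and control $\|\operatorname{grad} f(\hat X_k)-\bar g_k\|_F^{2}\le \tfrac{L_G^{2}}{N}\|\bold X_k-\bold{\hat X}_k\|_F^{2}$ by the Lipschitz continuity of $\operatorname{grad} f_i$ from Lemma \ref{al-4}; this bound feeds the remaining share of the $D_2$ constant. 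Finally I would apply Young's inequality to each of the quadratic error terms in $\|\hat X_{k+1}-\hat X_k\|_F^{2}$ and in the inner-product residuals, routing the $\beta^{2}\|\bar v_k\|_F^{2}\le \beta^{2}\|\bold Y_k\|_F^{2}/N$ pieces into $D_1$, the $\|\bold X_k-\bold{\hat X}_k\|_F^{2}$ pieces into $D_2$ (using $\alpha^{2}L_t^{2}\|\bold X_k-\bold{\hat X}_k\|_F^{4}\le \alpha^{2}L_t^{2}N\delta_1^{2}\|\bold X_k-\bold{\hat X}_k\|_F^{2}$ with $\tilde C=\delta_2^{2}/r$ via $\mathcal S_2$), and the $\|\bold X_{k+1}-\bold{\hat X}_{k+1}\|_F^{2}$ pieces into $D_4$.

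The main obstacle I anticipate is bookkeeping: matching the specific constants $D_1,D_2,D_3,D_4$ requires a precise sequence of Young-inequality splittings (in particular, choosing the $\varepsilon$ that balances $-\tfrac{\beta}{2}\|\bar g_k\|_F^{2}$ against the induced smoothness remainder so that the retraction error, the normal component of $Y_{i,k}$, and the quadratic $\operatorname{grad} h_{i,t}$ term all fit under the stated coefficients). The step-size restrictions $\beta\le\tfrac1{9L_G}$ and $\alpha\le\min\{1,1/M,\Phi/L_t\}$ are used here to guarantee that the smoothness remainder does not overpower the linear descent and that the retraction remainder contributes only at order $MA\beta^{2}\|\bold Y_k\|_F^{2}/N$, which is exactly the $2MA$ summand appearing in $D_1$.
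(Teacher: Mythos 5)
Your plan follows the same architecture as the paper's proof: the manifold smoothness inequality applied along $\hat X_k\to\hat X_{k+1}$, the transfer from the IAM to the Euclidean mean via Lemma \ref{al-1} and Lemma \ref{pr} (producing the $D_4$ and part of the $D_2$ contributions), the agent-wise expansion $X_{i,k+1}=X_{i,k}-\alpha\,\mathrm{grad}\,h_{i,t}(\mathbf{X}_k)-\beta v_{i,k}+e_{i,k}$ with $\|e_{i,k}\|_F\le M\|\cdot\|_F^2$ from Lemma \ref{lem-1}, the unbiasedness of $\bar F_k$ to extract the $-\beta\|\bar g_k\|_F^2$ descent term, and Young-inequality bookkeeping under $\beta\le\frac{1}{9L_G}$. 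Your polarization identity for $-\beta\langle\mathrm{grad}f(\hat X_k),\bar g_k\rangle$ is a harmless cosmetic variant of the paper's direct computation of $\langle\bar g_k,E[\bar X_{k+1}-\bar X_k\mid\mathcal F_k]\rangle$.

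There is, however, one genuine gap: your treatment of the normal component $Y_{i,k}-v_{i,k}=\mathcal P_{N_{X_{i,k}}\mathcal M}[Y_{i,k}]$. You propose to bound $\|Y_{i,k}-v_{i,k}\|_F$ itself via the explicit formula \eqref{1} and then ``rewrite $X_{i,k}^\top Y_{i,k}$ using consensus to $\hat X_k$.'' This does not go through: $Y_{i,k}$ is the gradient tracker, and its recursion \eqref{g-t} accumulates vectors lying in the tangent spaces of \emph{past and neighboring} iterates, so the symmetric part $X_{i,k}^\top Y_{i,k}+Y_{i,k}^\top X_{i,k}$ is controlled neither by the current consensus error squared nor by anything available pointwise at iteration $k$ (the tracking error $\|Y_{i,k}-\bar F_k\|_F$ is only bounded in accumulated form in Lemma \ref{lem-5}(i), or crudely by the constant $10A+L_G$). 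Even for the well-behaved part $\bar F_k$ of $Y_{i,k}$, the normal component at $X_{i,k}$ is only \emph{first order} in $\|\mathbf X_k-\hat{\mathbf X}_k\|_F$, so the resulting inner-product term $\beta\langle\bar F_k,\bar v_k-\bar Y_k\rangle$ is $O(\beta)\cdot\|\mathbf X_k-\hat{\mathbf X}_k\|_F$; after Young's inequality this leaves an $O(\beta^2)$ additive residual per iteration that is not present in the stated bound and would introduce a non-vanishing $O(\beta)$ bias in Theorem \ref{th-1}. The paper avoids this entirely by never bounding the normal component in norm: it uses the exact orthogonality $\langle\mathrm{grad}f(X_{i,k}),\,Y_{i,k}-v_{i,k}\rangle=0$ (tangent versus normal at the \emph{same} point $X_{i,k}$), so only the Lipschitz difference $\bar F_k-\mathrm{grad}f(X_{i,k})$ pairs against the normal component, and the latter is then crudely majorized by $\|Y_{i,k}\|_F$ inside a $\beta^2$ term feeding $D_1$. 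You need this orthogonality step; without it the $term\ 4$ estimate and hence the constants $D_1,D_2,D_3$ cannot be recovered.
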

	\begin{proof}
		Denote the conditional expectation $E[\cdot | \mathcal{F}_k]$, where the sigma-algebra is defined as $\mathcal{F}_k:=\sigma \{x_{i,0},  \xi_{i,t-1}^{[j]}, 0 \leq j \leq N_t, 0 \leq t \leq k-1, i=1, \dots, N\}$. Then $\bold{X}_k$ is adapted to $\mathcal{F}_k$, and hence $\hat X_k\in \mathcal{F}_k$. By using \eqref{smooth} and noticing that $L_g \leq L_G$ in Lemma \ref{al-4}, we have
		\begin{align}\label{3-1}
		&E[f(\hat X_{k+1})|\mathcal{F}_k] \notag\\
		\leq & f(\hat X_k)+\langle \text{grad}f(\hat X_k), E[\hat X_{k+1} - \hat X_{k} | \mathcal{F}_k]\rangle+\frac{L_G}{2}E[\|\hat X_{k+1} - \hat X_{k}\|_F^2|\mathcal{F}_k]\notag\\
		\leq & f(\hat X_k)+\langle \text{grad}f(\hat X_k) - \bar g_k, E[\hat X_{k+1} - \hat X_{k}| \mathcal{F}_k]\rangle\notag\\
		+&\langle \bar g_k, E[\hat X_{k+1} - \hat X_{k}| \mathcal{F}_k]\rangle+\frac{L_G}{2}E[\|\hat X_{k+1} - \hat X_{k}\|_F^2|\mathcal{F}_k]\\
		\leq & f(\hat X_k) + \frac{1}{L_G}\|\text{grad}f(\hat X_k) - \bar g_k\|_F^2+\frac{3L_G}{4}E[\|\hat X_{k+1} - \hat X_{k}\|_F^2|\mathcal{F}_k]\notag\\
		+&\langle \bar g_k, E[\hat X_{k+1} - \hat X_{k}| \mathcal{F}_k]\rangle, \notag
		\end{align}
		where the last inequality follows by the Young's inequality.
		Furthermore, we also have
		\begin{align*}
		&\langle \bar g_k, E[\hat X_{k+1} - \hat X_{k}| \mathcal{F}_k]\rangle \leq \langle \bar g_k,E[\bar{X}_{k+1} - \bar{X}_{k}| \mathcal{F}_k]\rangle +\frac{\beta^2 L_G}{2}\|\bar g_k\|_F^2\\
		+&\frac{1}{2\beta^2 L_G}E[\|\hat X_{k+1}-\bar{X}_{k+1} + \hat X_{k}- \bar{X}_{k}\|_F^2| \mathcal{F}_k].
		\end{align*}

		This combined with \eqref{3-1} produces
		\begin{equation}\label{3-3}
		\begin{aligned}
		E[f(\hat X_{k+1})|\mathcal{F}_k]\leq & f(\hat X_k) + \frac{1}{L_G}\|\text{grad}f(\hat X_k) - \bar g_k\|_F^2\\
		+& \frac{1}{2\beta^2 L_G}E[\|\hat X_{k+1}-\bar{X}_{k+1} + \hat X_{k}- \bar{X}_{k}\|_F^2| \mathcal{F}_k]\\
		+&\frac{\beta^2 L_G}{2}\|\bar g_k\|_F^2+ \frac{3L_G}{4}E[\|\hat X_{k+1} - \hat X_{k}\|_F^2|\mathcal{F}_k]\\
		+&\langle \bar g_k, E[\bar{X}_{k+1} - \bar{X}_{k}| \mathcal{F}_k]\rangle.
		\end{aligned}
		\end{equation}
		By Assumption \ref{assu-2}, we can derive that $E[\bar F_k|\mathcal{F}_k] = \bar g_k$, which implies
			\begin{align*}
			&\langle \bar g_k, E[\bar{X}_{k+1} - \bar{X}_{k}| \mathcal{F}_k]\rangle \\
			= &\langle \bar g_k , E[\bar{X}_{k+1} - \bar{X}_{k}+\beta \bar F_k|\mathcal{F}_k] - \beta \bar g_k\rangle\\
			=&-\beta \|\bar g_k\|_F^2 +E[ \langle \bar F_k ,\frac{1}{N} \sum_{i=1}^N [{X}_{i,k+1} - {X}_{i,k}+\beta  F_i(X_{i,k})]\rangle
			\mathcal{F}_k ]
			\\	= &-\beta \|\bar g_k\|_F^2+E[ \langle \bar F_k , \frac{1}{N}\sum_{i=1}^N [\beta (Y_{i,k}-v_{i,k})-\alpha \text{grad}h_{i,t}(\bold{X}_k)] \rangle |
			\mathcal{F}_k]\\
			+& E[ \langle \bar F_k , \frac{1}{N} \sum_{i=1}^N ({X}_{i,k+1} - [{X}_{i,k}-\beta v_{i,k}-\alpha \text{grad}h_{i,t}(\bold{X}_k)]) \rangle |
			\mathcal{F}_k] ,
			\end{align*}
			where the last equality  utilized $\frac{1}{N}\sum_{i=1}^N Y_{i,k} = \frac{1}{N}\sum_{i=1}^N F_i(X_{i,k})$ from Lemma \ref{gt}.
		Substituting  the above bound into \eqref{3-3}, we have
		\begin{align}\label{main}
		&E[f(\hat X_{k+1})|\mathcal{F}_k]\notag\\
		\leq & f(\hat X_k) -(\beta - \frac{\beta^2 L_G}{2})\|\bar g_k\|_F^2+\frac{1}{L_G}\underbrace{\|\text{grad}f(\hat X_k) - \bar g_k\|_F^2}_{term\ 1}\notag\\
		+& \underbrace{\frac{1}{2\beta^2 L_G}E[\|\hat X_{k+1}-\bar{X}_{k+1} + \hat X_{k}- \bar{X}_{k}\|_F^2]}_{term\ 2}\notag\\
		+&\underbrace{E[ \langle \bar F_k , \frac{1}{N} \sum_{i=1}^N ({X}_{i,k+1} - [{X}_{i,k}-\beta v_{i,k}-\alpha \text{grad}h_{i,t}(\bold{X}_k)]) \rangle |
			\mathcal{F}_k]}_{term\ 3}\notag\\
		+&\underbrace{E[ \langle \bar F_k , \frac{1}{N}\sum_{i=1}^N [\beta (Y_{i,k}-v_{i,k})-\alpha \text{grad}h_{i,t}(\bold{X}_k)] \rangle |
			\mathcal{F}_k]}_{term\ 4} \notag\\
		+&\frac{3L_G}{4} \underbrace{ E[\|\hat X_{k+1} - \hat X_{k}\|_F^2|\mathcal{F}_k]}_{term\ 5}.
		\end{align}
		
		Here, we attempt to analyze the $term\ 1$ - $5$ in \eqref{main}. First, we focus on the $term\ 1$. Since Assumption \ref{assu-5} holds, and by using the Lipschitz-type inequality in Lemma \ref{al-4}, we derive that\begin{equation}\label{lip}
			\begin{aligned}
				term\ 1 &=\| \text{grad}f(\hat X_k)-\frac{1}{N} \sum_{i=1}^N \text{grad}f_i(X_{i,k})\|_F^2\\
				&\leq \frac{1}{N}\sum_{i=1}^N \|\text{grad}f_i(\hat X_k)-\text{grad}f_i(X_{i,k})\|_F^2\\
				& \leq \frac{L_G^2}{N}\sum_{i=1}^N \|\hat X_k - X_{i,k}\|_F^2= \frac{L_G^2}{N}\|\bold{X}_k-\bold{\hat X}_k\|_F^2.
				\end{aligned}
		\end{equation}
		
		Next, we turn to the $term\ 2$. Since $\bold{X}_k \in \mathcal{S}$ for $\forall k \geq 0$, we have $\|\bold{X}_{k} - \bold{\hat X}_{k}\|_F^2 \leq \frac{N}{2}$ by Definition \ref{l-r}. Then by using Lemma \ref{al-1}, we have
		\begin{equation}\label{3-4}
		\begin{aligned}
		&\|\hat X_{k+1}-\bar{X}_{k+1} + \hat X_{k}- \bar{X}_{k}\|_F^2\\
		\leq& 2\|\hat X_{k+1}-\bar{X}_{k+1}\|_F^2+2 \|\hat X_{k}- \bar{X}_{k}\|_F^2\\
		\leq& \frac{8r}{N^2}(\|\bold{X}_{k+1}-\hat{\bold{X}}_{k+1}\|_F^4+\|\bold{X}_{k}- \hat{\bold{X}}_{k}\|_F^4).
		\end{aligned}
		\end{equation}
		We also have $\frac{1}{N}\|\bold{X}_k-\bold{\hat X}_k\|_F^2 \leq \delta_1^2, $ from Definition \ref{l-r}. This together with
		 \eqref{3-4} yields $$
		term\ 2 \leq \frac{4r \delta_1^2}{N\beta^2L_G}\left(\|\bold{ X}_{k+1}-\hat{\bold{X}}_{k+1}\|_F^2+\|\bold{X}_{k}- \hat{\bold{X}}_{k}\|_F^2\right).$$
		
		For the $term\ 3$, since we have $\|\bar F_k\|_F \leq A$ according to Assumption \ref{assu-3} and Remark \ref{re-4}, it follows that
		\begin{align*}
		&term\ 3\\
		\leq & E[ \frac{1}{N} \| \bar F_k \|_F \cdot \|\sum_{i=1}^N ({X}_{i,k+1} - [{X}_{i,k}-\beta v_{i,k}-\alpha \text{grad}h_{i,t}(\bold{X}_k)]) \|_F |
		\mathcal{F}_k]\\
		\leq&\frac{A}{N}\sum_{i=1}^N E[\|{X}_{i,k+1} - [{X}_{i,k}-\beta v_{i,k}-\alpha \text{grad}h_{i,t}(\bold{X}_k)]\|_F| \mathcal{F}_k]_F.
		\end{align*}
		According to \eqref{re-u} and \eqref{Rx} in Lemma \ref{lem-1}, we have\begin{equation}\label{3-7}
		\begin{aligned}
		term\ 3\leq& \frac{AM}{N}\sum_{i=1}^N E[\|\beta v_{i,k}+\alpha \text{grad}h_{i,t}(\bold{X}_k)\|_F^2| \mathcal{F}_k]_F\\
		\leq& \frac{4AM\alpha^2}{N}\|\bold{X}_k-\bold{\hat X}_k\|_F^2+ \frac{2AM\beta^2}{N}E[\|\bold{Y}_k\|_F^2|\mathcal{F}_k],
		\end{aligned}
		\end{equation}
		where the last inequality follows by \eqref{9-2} in Lemma \ref{al-3} and $\|\bold{v}_k\|_F \leq \|\bold{Y}_k\|_F$.
		
		We now come to analyze the $term\ 4$. Since $N_{x_{i,k}\mathcal{M}}$ is defined as the orthogonal complement of $T_{x_{i,k}\mathcal{M}}$ with respect to the Euclidean space stated in Section \ref{sec-2.1}, we derive $Y_{i,k} - v_{i,k} = \mathcal{P}
		_{N_{x_{i,k}\mathcal{M}}}[Y_{i,k}]$. Together with $\text{grad} f(X_{i,k}) \in T_{x_{i,k}\mathcal{M}}$, we have \[\langle \text{grad} f(X_{i,k}) , \beta (Y_{i,k}-v_{i,k})\rangle = 0,\]which then gives\begin{equation}\label{3-10}
		\begin{aligned}
		term\ 4
		= & \frac{1}{N}\sum_{i=1}^N E[\langle \bar F_k - \text{grad} f(X_{i,k}) , \beta (Y_{i,k}-v_{i,k})\rangle |	\mathcal{F}_k] \\
		-& \frac{ \alpha}{N} E[\langle \bar F_k , \sum_{i=1}^N \text{grad}h_{i,t}(\bold{X}_k) \rangle| \mathcal{F}_k] .
		\end{aligned}
		\end{equation}
		
		For the first term of \eqref{3-10}, by Young's inequality, we have
		\begin{align}\label{3-12}
		&\frac{1}{N}\sum_{i=1}^N E[\langle \bar F_k - \text{grad} f(X_{i,k}) , \beta (Y_{i,k}-v_{i,k})\rangle |	\mathcal{F}_k] \notag\\
		\leq& \frac{1}{4NL_G}\sum_{i=1}^N E[\|\bar F_k - \text{grad} f(X_{i,k}) \|_F^2|	\mathcal{F}_k]\notag\\
		+&\frac{\beta^2 L_G}{N}\sum_{i=1}^N E[\|\mathcal{P}
		_{N_{x_{i,k}\mathcal{M}}}[Y_{i,k}]\|_F^2 | \mathcal{F}_k]\\
		\leq & \frac{1}{4NL_G}\sum_{i=1}^N E[\|\bar F_k - \text{grad} f(X_{i,k}) \|_F^2|	\mathcal{F}_k]+\frac{\beta^2 L_G}{N}E[\|\bold{Y}_k\|_F^2 | \mathcal{F}_k]\notag,
		\end{align}
		where the last inequality follows by the nonexpansiveness of the orthogonal projection.
		According to \eqref{oracle}, we have \begin{equation}\label{3-13}
		E[\|\bar F_k - \bar g_k\|_F^2|\mathcal{F}_k] \leq \frac{1}{N}\sum_{i=1}^N E[\|F_i(X_{i,k}) - \text{grad} f_i(X_{i,k})\|_F^2|\mathcal{F}_k]\leq \frac{\sigma^2}{ N_k},
		\end{equation}
		and by applying \eqref{lip} and Lemma \ref{al-4} we can derive \begin{equation*}
		\begin{aligned}
		&\frac{1}{N}\sum_{i=1}^N \|\text{grad} f(X_{i,k})- \bar g_k \|_F^2 \\
		\leq &2\| \text{grad} f(\hat X_k) - \bar g_k\|_F^2
		+\frac{2}{N}\sum_{i=1}^N \|\text{grad} f(X_{i,k})- \text{grad} f(\hat X_k) \|_F^2\\
		\leq &\frac{4L_G^2}{N}\|\bold{X}_k-\bold{\hat X}_k\|_F^2.
		\end{aligned}
		\end{equation*}
		This combining with \eqref{3-13} further derive \begin{equation*}
		\begin{aligned}
		&\frac{1}{4NL_G}\sum_{i=1}^N E[\|\bar F_k - \text{grad} f(X_{i,k}) \|_F^2|	\mathcal{F}_k]\\
		 \leq& \frac{1}{2L_G}E[\|\bar F_k - \bar g_k\|_F^2|\mathcal{F}_k]+\frac{1}{2L_GN}\sum_{i=1}^N \|\ \text{grad} f(X_{i,k})- \bar g_k \|_F^2\\
		\leq &\frac{\sigma^2}{2L_G N_k}+\frac{2L_G}{N}\|\bold{X}_k-\bold{\hat X}_k\|_F^2.
		\end{aligned}
		\end{equation*}
		Substituting the above bound into \eqref{3-12}, we have
		\begin{equation}\label{3-16}
		\begin{aligned}
		&\frac{1}{N}\sum_{i=1}^N E[\langle \bar F_k - \text{grad} f(X_{i,k}) , \beta (Y_{i,k}-v_{i,k})\rangle |	\mathcal{F}_k]\\
		 \leq &\frac{\sigma^2}{2L_G N_k}+\frac{2L_G}{N}\|\bold{X}_k-\bold{\hat X}_k\|_F^2+\frac{\beta^2 L_G}{N}E[\|\bold{Y}_k\|_F^2 | \mathcal{F}_k].
		\end{aligned}
		\end{equation}
		For the second term of \eqref{3-10}, by using \eqref{9-2} in Lemma \ref{al-3} and the parameter $L_t \leq 2$, we can derive
		\begin{equation*}
		\begin{aligned}
		\frac{ \alpha}{N} E[\langle \bar F_k , \sum_{i=1}^N \text{grad}h_{i,t}(\bold{X}_k) \rangle| \mathcal{F}_k] \leq &\frac{2\alpha}{N}E[\|\bar F_k\|_F\cdot \|\bold{X}_k-\bold{\hat X}_k\|_F^2|\mathcal{F}_k] \\
		\leq& \frac{2\alpha A}{N} \|\bold{X}_k-\bold{\hat X}_k\|_F^2,
		\end{aligned}
		\end{equation*}
		where the last inequality follows by $\|\bar F_k\|_F \leq A$.
		This together with  \eqref{3-16} gives the estimation of $term\ 4$ as
		\begin{equation}
		term\ 4 \leq \frac{\sigma^2}{2L_G N_k}+\frac{2L_G-2\alpha A}{N}\|\bold{X}_k-\bold{\hat X}_k\|_F^2+\frac{\beta^2 L_G}{N}E[\|\bold{Y}_k\|_F^2 | \mathcal{F}_k].
		\end{equation}
		
		Finally, we estimate the $term \ 5$. We have the following analysis about the average error
		\begin{align}\label{3-17}
		&\|\bar X_{k+1}-\bar X_k\|_F= \|\frac{1}{N}\sum_{i=1}^N(X_{i,k+1}-X_{i,k}) \|_F \notag\\
		=&\|\frac{1}{N}\sum_{i=1}^N(X_{i,k+1}-X_{i,k}+\beta v_{i,k}+\alpha \text{grad}h_{i,t}(\bold{X}_k))\notag\\
		-&\frac{1}{N}\sum_{i=1}^N(\beta v_{i,k}+\alpha \text{grad}h_{i,t}(\bold{X}_k))\|_F\\
		\leq &\frac{1}{N}\sum_{i=1}^N\|X_{i,k+1}-(X_{i,k}-\beta v_{i,k}-\alpha \text{grad}h_{i,t}(\bold{X}_k))\|_F\notag\\
		+&\beta\|\bar v_{k}\|_F+ \frac{\alpha}{N}\| \sum_{i=1}^N \text{grad}h_{i,t}(\bold{X}_k)\|_F.\notag
		\end{align}
	    
		By using \eqref{re-u}, and \eqref{Rx} in Lemma \ref{lem-1},
		\[\begin{aligned}
		\|\bar X_{k+1}-\bar X_k\|_F\leq& \frac{M}{N}\sum_{i=1}^N\|\beta v_{i,k}+\alpha \text{grad}h_{i,t}(\bold{X}_k))\|_F^2+\beta\|\bar v_{k}\|_F\\
		+&\frac{\alpha}{N}\|\sum_{i=1}^N \text{grad}h_{i,t}(\bold{X}_k) \|_F\\
		\leq & \frac{2M\beta^2}{N}\|\bold{v}_k\|_F^2+\frac{2M\alpha^2}{N}\|\text{grad}h_t(\bold{X}_k)\|_F^2\\
		+&\beta\|\bar v_{k}\|_F+\frac{\alpha}{N} \| \sum_{i=1}^N \text{grad}h_{i,t}(\bold{X}_k) \|_F.
		\end{aligned}\]
		Applying \eqref{9-1} and \eqref{9-2} in Lemma \ref{al-3}, we get\[\|\bar X_{k+1}-\bar X_k\|_F\leq \frac{2M\beta^2}{N}\|\bold{v}_k\|_F^2+\frac{2M\alpha^2L_t^2+\alpha L_t}{N}\|\bold{X}_k-\bold{\hat X}_k\|_F^2+\beta\|\bar v_{k}\|_F.\]
		Since $\alpha \leq \frac{1}{M},\ L_t \leq 2, \|\bold{ v}_k\|_F \leq \|\bold{Y}_k\|_F$ hold, we then derive \begin{equation}\label{3-18}
		\|\bar X_{k+1}-\bar X_k\|_F\leq \frac{2M\beta^2}{N}\|\bold{Y}_k\|_F^2+ \frac{10 \alpha}{N}\|\bold{X}_k-\bold{\hat X}_k\|_F^2 +\beta\|\bar v_{k}\|_F.
		\end{equation}
		For the term $\|\bar v_{k}\|_F$, we have the following estimation. By Lemma \ref{gt}, we have \begin{equation*}
		\begin{aligned}
		\|\bar v_k\|_F^2 \leq &2\|\bar F_k\|_F^2+2\|\bar v_k - \bar Y_k\|_F^2\leq 2\|\bar F_k\|_F^2+\frac{2}{N}\sum_{i=1}^N\|Y_{i,k} - v_{i,k}\|_F^2\\
		\leq& 2\|\bar F_k\|_F^2+\frac{2}{N}\|\bold{Y}_k\|_F^2,
		\end{aligned}
		\end{equation*}where the last inequality follows by the non-expansiveness of the orthogonal projection since $Y_{i,k} - v_{i,k} = \mathcal{P}
		_{N_{x_{i,k}\mathcal{M}}}[Y_{i,k}]$.
		This combined with \eqref{3-18} implies \begin{align*}
		E[\|\bar X_{k+1}-\bar X_k\|_F^2|\mathcal{F}_k]	\leq &2E[( \frac{10 \alpha}{N}\|\bold{X}_k-\bold{\hat X}_k\|_F^2+\frac{2M\beta^2}{N}\|\bold{Y}_k\|_F^2)^2|\mathcal{F}_k]\\
		+&4\beta^2 E[\|\bar F_k\|_F^2+\frac{1}{N}\|\bold{Y}_k\|_F^2|\mathcal{F}_k].
		\end{align*}
		
		\noindent Since $E[\langle \bar F_k - \bar g_k, \bar g_k \rangle|\mathcal{F}_k]=0$ according to Assumption \ref{assu-2}, we have \begin{equation}\label{3-19}
		E[\|\bar F_k\|_F^2 |\mathcal{F}_k] = E[\|\bar F_k - \bar g_k\|_F^2 |\mathcal{F}_k]+\|\bar g_k\|_F^2\leq \frac{\sigma^2}{N_k}+ \|\bar g_k\|_F^2,
		\end{equation} where the inequality follows by the independence of $F_i(X_{i,k})$ and \eqref{oracle}, which further implies that \begin{equation}\label{3-21}
		\begin{aligned}
		&E[\|\bar X_{k+1}-\bar X_k\|_F^2|\mathcal{F}_k]\\
		\leq &4\left( \frac{100 \alpha^2}{N^2}\|\bold{X}_k-\bold{\hat X}_k\|_F^4+\frac{4M^2\beta^4}{N^2}E[\|\bold{Y}_k\|_F^4|\mathcal{F}_k]\right)\\
		+&4\beta^2 \left(\frac{1}{N}E[\|\bold{Y}_k\|_F^2|\mathcal{F}_k]\right)+4\beta^2\left(\frac{\sigma^2}{N_k}+ \|\bar g_k\|_F^2\right).
		\end{aligned}
		\end{equation}
		By Lemma \ref{lem-3}, we have $\beta \|\bold{Y}_k\|_F \leq \frac{\alpha \delta_1}{5}\sqrt{N}$ . This together with the bound of local region $\mathcal{S}_1$ produces
		\begin{align*}
		&E[\|\bar X_{k+1}-\bar X_k\|_F^2|\mathcal{F}_k] \\
		\leq & 4\left( \frac{100\delta_1^2 \alpha^2}{N}\|\bold{X}_k-\bold{\hat X}_k\|_F^2+\frac{(M \alpha \beta\delta_1)^2}{5N}E[\|\bold{Y}_k\|_F^2|\mathcal{F}_k]\right)\\
		+&4 \beta^2 \left(\frac{1}{N}E[\|\bold{Y}_k\|_F^2|\mathcal{F}_k]\right)+4\beta^2\left(\frac{\sigma^2}{N_k}+ \|\bar g_k\|_F^2\right).
		\end{align*}
		For simplicity, let $\bar{\sigma}_{r,k}:=\sigma_r(\bar{X}_{k})$. Since \eqref{col-1} implies that $\left(\frac{2}{(\bar \sigma_{r,k+1}+\bar \sigma_{r,k})}\right)^2 \leq \frac{1}{(1-2\delta_1^2)^2} $, then by using \eqref{al-2} and \eqref{3-21}, we have $$\begin{aligned}
		&term\ 5 = E[\|\bold{\hat X}_{k+1} - \bold{\hat X}_{k}\|_F^2|\mathcal{F}_k] \leq  \frac{1}{(1-2\delta_1^2)^2}E[\|\bar X_{k+1}-\bar X_k\|_F^2|\mathcal{F}_k]\\
		\leq &\frac{4}{(1-2\delta_1^2)^2}\left( \frac{100\delta_1^2 \alpha^2}{N}\|\bold{X}_k-\bold{\hat X}_k\|_F^2+\frac{(M \alpha \beta\delta_1)^2}{5N}E[\|\bold{Y}_k\|_F^2|\mathcal{F}_k]\right)\\
		+&\frac{4\beta^2}{(1-2\delta_1^2)^2}\left(\frac{1}{N}E[\|\bold{Y}_k\|_F^2|\mathcal{F}_k]\right)+\frac{4\beta^2}{(1-2\delta_1^2)^2}\left(\frac{\sigma^2}{N_k}+ \|\bar g_k\|_F^2\right).
		\end{aligned}$$
		
		Since $\alpha \leq \frac{1}{M}$ and $\delta_1 \leq \frac{1}{5\sqrt{r}}\delta_2\leq \frac{1}{30\sqrt{r}}$ by Definition \ref{l-r}, we can estimate that $\frac{300\delta_1^2 \alpha^2}{N(1-2\delta_1^2)^2}\leq \frac{13\alpha^2\delta_2^2}{Nr}$, $\frac{3\beta^2}{(1-2\delta_1^2)^2} \leq 4\beta^2$, and $\frac{3(M \alpha \beta\delta_1)^2}{5N(1-2\delta_1^2)^2} \leq \frac{\delta_1^2 \beta^2}{N}$.
		We then derive that
		\begin{align*}
		\frac{3L_G}{4}term\ 5\leq &\frac{13\alpha^2L_G\delta_2^2}{Nr}\|\bold{X}_k-\bold{\hat X}_k\|_F^2+4L_G \beta^2\|\bar g_k\|_F^2\\
		+&\frac{(\delta_1^2+4)L_G\beta^2}{N}E[\|\bold{Y}_k\|_F^2|\mathcal{F}_k]+\frac{4 L_G\beta^2 \sigma^2}{N_k}.
		\end{align*}
		Finally, we will combine all the terms. According to the upper bounds of $term\ 1,2,3,4,5$, we derive that
		\begin{align*}
		&E[f(\hat X_{k+1})|\mathcal{F}_k] \leq f(\hat X_k)-(\beta - \frac{\beta^2 L_G}{2})\|\bar g_k\|_F^2\\
		+&\frac{1}{L_G}term\ 1+term\ 2+term\ 3+term\ 4+\frac{3L_G}{4}term\ 5\\
		\leq & f(\hat X_k)-(\beta-\frac{9}{2}L_G\beta^2)\|\bar g_k\|_F^2+\frac{(1+4 L_G^2 \beta^2)\sigma^2}{2L_G N_k}\\
		+&\frac{3L_G+2\alpha A+13L_G\tilde C \alpha^2+\frac{4r \delta_1^2}{\beta^2L_G}}{N}\|\bold{X}_k-\bold{\hat X}_k\|_F^2\\
		+&\frac{(\delta_1^2+5)L_G+2MA}{N}\beta^2E[\|\bold{Y}_k\|_F^2|\mathcal{F}_k]+\frac{4r \delta_1^2}{N\beta^2L_G}\|\bold{ X}_{k+1}-\hat{\bold{X}}_{k+1}\|_F^2,
		\end{align*}
		where $\tilde C = \frac{\delta_2^2}{r }$. Since $\beta - \frac{9\beta^2 L_G}{2}\geq \tfrac{\beta}{2}$ by  $\beta \leq \frac{1}{9L_G}$, the lemma is proved with the definitions of $D_1, D_2, D_3, D_4$.
	\end{proof}
	
\subsection{Main Results}
	Next, we take $E[\|\text{grad}f(\hat X)\|_F^2]$ as the metric and introduce the convergence theorem and convergence rate of Algorithm \ref{alg:1}.
	\begin{theorem}\label{th-1}
		 Let  Assumption \ref{assu-1}-\ref{assu-3} hold. Consider Algorithm \ref{alg:1},  where  $\bold{X}_0 \in \mathcal{S}$ and  $t \geq [\log_{\sigma_2}(\frac{1}{2\sqrt{N}})]$. Suppose, in addition, that the positive step size $\alpha$ satisfies $\alpha \leq \min\{1,\frac{1}{M},\frac{\Phi}{L_t}\}$ with $\Phi $ and $L_t$ defined in Lemma \ref{al-6} and Lemma \ref{al-3}, and step size $\beta$ satisfies $$\beta \leq \min\{\bar \beta,\frac{1}{9L_G}, \frac{1}{4[\alpha \delta_1L_G(16 \Gamma_0+\Gamma_2)+2C_1]}\},$$where $\bar \beta$ is given in Lemma \ref{lem-3}. Denote by $\hat X_{k+1}$ the IAM in \eqref{iam} and by $\bold{\hat X}: = (\bm{1}_N \otimes I_n) \hat X$. Then we have
		\begin{align}\label{18}
		&\min_{k=0,\dots, K}\frac{1}{N}E[\|\bold{X}_k-\bold{\hat X}_k\|_F^2]\\
		\leq& \frac{8\beta \Gamma_0\left( f(\hat X_0)-f^*+\tilde{\Gamma}_3+\sigma^2\sum_{k=0}^K(\frac{2\Gamma_2\beta+ D_3+\frac{\beta}{2}}{N_k}+\frac{2\beta\Gamma_2}{N_{k+1}})\right)+\Gamma_1}{K+1},\notag
		\end{align}
		\begin{align}\label{19}
			\min_{k=0,\dots, K}E[\|\text{grad}f(\hat X_k)\|_F^2]\leq &\frac{\tilde \Gamma_1+(16+\Gamma_0 \alpha^2\delta_1^2)[f(\hat X_0)-f^*+\tilde{\Gamma}_3]}{\beta \cdot (K+1)} \notag\\
			+&\frac{\sigma^2\sum_{k=0}^K\left(\frac{2\beta\Gamma_2+\tilde D_3}{N_k}+\frac{2\beta\Gamma_2}{N_{k+1}}\right)}{\beta \cdot (K+1)},
		\end{align}
		where $\Gamma_0 $ - $ \Gamma_3$ are defined in Lemma \ref{lem-5}, $ \tilde \Gamma_1 = 2L_G^2 \beta \Gamma_1, \tilde{\Gamma}_3 = C_0+\frac{\beta}{2}(\Gamma_3+16\alpha^2 L_G^2 \Gamma_2 \Gamma_1)$; $ C_0 = \frac{D_4 \Gamma_0\alpha^2 \delta_1^2}{25}+\Gamma_1(D_2+D_4)$, $C_1 = D_2\Gamma_0+D_4\Gamma_0+D_1$,  and $\tilde D_3: = D_3+\frac{5\beta}{8}$ with $D_1$ - $D_4$ defined in Lemma \ref{lem-4}.
	\end{theorem}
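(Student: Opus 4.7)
The plan is to use the sufficient descent estimate of Lemma \ref{lem-4} as the main engine, telescoping it from $k=0$ to $K$ after taking total expectations. The resulting inequality has $\tfrac{\beta}{2}\sum_{k=0}^K E\|\bar g_k\|_F^2$ on the left (after dropping the nonnegative $f(\hat X_{K+1})-f^*$), while the right collects three coupled accumulated quantities: $\tfrac{1}{N}\sum_k E\|\bold{X}_k-\bold{\hat X}_k\|_F^2$, $\tfrac{\beta^2}{N}\sum_k E\|\bold{Y}_k\|_F^2$, and the noise tail $\sigma^2 \sum_k 1/N_k$, plus a shifted consensus term $\tfrac{1}{N}\sum_k E\|\bold{X}_{k+1}-\bold{\hat X}_{k+1}\|_F^2$. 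The first two are tied together by Lemma \ref{lem-5}, and are in turn fed back by $\|\bar g_k\|_F^2$, so the core of the proof is to decouple this system.

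First I would handle $\sum E\|\bold{Y}_k\|_F^2$ by splitting $\|\bold{Y}_k\|_F^2 \leq 2\|\bold{Y}_k-\bar{\bold{F}}_k\|_F^2 + 2N\|\bar F_k\|_F^2$ and invoking $E\|\bar F_k\|_F^2 \leq \sigma^2/N_k + \|\bar g_k\|_F^2$ (as in \eqref{3-19}). The tracking-error sum is then controlled by \eqref{l-2}, which in turn requires a bound on $\sum E\|F(\bold{X}_{k+1})-F(\bold{X}_k)\|_F^2$; adding and subtracting $\text{grad}f_i(X_{i,k+1})$ and $\text{grad}f_i(X_{i,k})$ reduces this to a variance contribution of order $\sigma^2(1/N_k+1/N_{k+1})$ plus $L_G^2 \sum E\|\bold{X}_{k+1}-\bold{X}_k\|_F^2$. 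The step difference is bounded through \eqref{re-u}, \eqref{Rx}, Lemma \ref{al-3}, and the non-expansiveness of $\mathcal{P}_{T_{X_{i,k}}\mathcal{M}}$ in terms of $\alpha$, $\beta$, the consensus error, and $\|\bold{Y}_k\|_F$. The consensus sum itself is bounded by \eqref{l-4}, which again involves $\sum E\|\bold{Y}_k\|_F^2$.

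After all substitutions the inequality contains $\sum E\|\bar g_k\|_F^2$, $\sum E\|\bold{Y}_k\|_F^2/N$, and $\sum E\|\bold{X}_k-\bold{\hat X}_k\|_F^2/N$ on both sides. The step-size condition $\beta \leq 1/[4(\alpha\delta_1 L_G(16\Gamma_0+\Gamma_2)+2C_1)]$ is tuned precisely so that the cross-coefficients on the right can be absorbed into the left-hand descent term, yielding a clean aggregate bound of the form $\sum_{k=0}^K E\|\bar g_k\|_F^2 \lesssim (f(\hat X_0)-f^*)/\beta + \sigma^2\sum_k(1/N_k + 1/N_{k+1})$ plus the initialization constants $\tilde{\Gamma}_3$. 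Feeding this back into \eqref{l-4} produces the consensus-error bound \eqref{18}. For the stationary measure \eqref{19}, I apply the triangle inequality $\|\text{grad}f(\hat X_k)\|_F^2 \leq 2\|\bar g_k\|_F^2 + 2\|\text{grad}f(\hat X_k)-\bar g_k\|_F^2$ together with the Lipschitz estimate \eqref{lip} and the consensus-error bound just derived. The main obstacle is the careful bookkeeping of constants through this coupled three-way system, verifying that the stated step-size condition suffices to produce precisely the constants $\tilde{\Gamma}_1$, $\tilde{\Gamma}_3$, and $\tilde D_3$ that appear in the theorem statement.
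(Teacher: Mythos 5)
Your proposal follows essentially the same route as the paper's proof: telescope the descent estimate of Lemma \ref{lem-4} in total expectation, control the coupled sums $\sum_k E\|\bold{Y}_k-\bar{\bold{F}}_k\|_F^2$ and $\sum_k E\|\bold{X}_k-\bold{\hat X}_k\|_F^2$ via Lemma \ref{lem-5} (splitting $F(\bold{X}_{k+1})-F(\bold{X}_k)$ into a Lipschitz part and two noise parts exactly as the paper does), use the stated bound on $\beta$ to close the loop, and finish with the decomposition \eqref{t-7}. The one point to adjust: \eqref{l-4} bounds the consensus sum by $\sum_k E\|\bold{Y}_k\|_F^2$, not by $\sum_k E\|\bar g_k\|_F^2$, so you cannot obtain \eqref{18} by feeding the aggregate $\bar g_k$ bound back into \eqref{l-4}; instead, when absorbing the cross-terms you must retain a residual $-\tfrac{\beta}{8N}\sum_k E\|\bold{Y}_k\|_F^2$ (the paper converts $-\|\bar g_k\|_F^2\le \sigma^2/N_k-\|\bar F_k\|_F^2$ and $-\|\bar{\bold{F}}_k\|_F^2\le\|\bold{Y}_k-\bar{\bold{F}}_k\|_F^2-\tfrac12\|\bold{Y}_k\|_F^2$ so that $\sum_k E\|\bold{Y}_k\|_F^2$ becomes the pivot quantity in \eqref{y_k}), after which both \eqref{18} and the bound on $\sum_k E\|\bar g_k\|_F^2$ follow from that single estimate via $E\|\bar g_k\|_F^2\le \sigma^2/N_k+\tfrac1N E\|\bold{Y}_k\|_F^2$.
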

	\begin{proof}
		By taking unconditional expectations of the sufficiently descent inequality derived in Lemma \ref{lem-4}, we obtain that
		\begin{equation}\label{rec}
			\begin{aligned}
			&E[f(\hat X_{k+1})] \\
			\leq & E[f(\hat X_k)]-\frac{\beta}{2}E[\|\bar g_k\|_F^2]+\frac{D_1}{N}\beta^2E[\|\bold{Y}_k\|_F^2]\\
			+&\frac{D_2}{N}E[\|\bold{X}_k-\bold{\hat X}_k\|_F^2]+\frac{\sigma^2D_3}{ N_k}+\frac{D_4}{N}E[\|\bold{ X}_{k+1}-\hat{\bold{X}}_{k+1}\|_F^2].
			\end{aligned}
		\end{equation}
		Then, by taking unconditional expectations of \eqref{l-4}, we have
		\begin{equation}\label{t-1}
		\frac{1}{N}\sum_{k=0}^KE[\|\bold{X}_k-\bold{\hat X}_k\|_F^2] \leq \Gamma_0\beta^2 \frac{1}{N}\sum_{k=0}^KE[\|\bold{Y}_k\|_F^2] +\Gamma_1.
		\end{equation}
		By taking summation of \eqref{rec} from $k=0$ to $K$ and substituting \eqref{t-1} into it, we have\begin{align*}
		&E[f(\hat X_{k+1})]\\
		\leq & f(\hat X_0)- \frac{\beta}{2}\sum_{k=0}^K E[\|\bar g_k\|_F^2]+\frac{D_1}{N}\beta^2\sum_{k=0}^K E[\|\bold{Y}_k\|_F^2]+D_3\sigma^2\sum_{k=0}^K \frac{1}{N_k}\\
		+&\frac{D_2}{N}\sum_{k=0}^KE[\|\bold{X}_k-\bold{\hat X}_k\|_F^2]+\frac{D_4}{N}\sum_{k=1}^{K+1}E[\|\bold{ X}_{k+1}-\hat{\bold{X}}_{k+1}\|_F^2]\\
		\leq & f(\hat X_0)-\frac{\beta}{2}\sum_{k=0}^K E[\|\bar g_k\|_F^2]+\frac{D_2\Gamma_0+D_4\Gamma_0+D_1}{N}\beta^2\sum_{k=0}^K E[\|\bold{Y}_k\|_F^2]\\
		+&D_3\sigma^2\sum_{k=0}^K \frac{1}{N_k}+\frac{D_4 \Gamma_0 \beta^2E[\|\bold{Y}_{k+1}\|_F^2]}{N}+\Gamma_1(D_2+D_4).
		\end{align*}
		Since $\beta \leq \frac{\alpha \delta_1}{5(10A+L_G)}$ and $\|\bold{Y}_{k+1}\|_F$ is bounded according to Lemma \ref{lem-3}, we have $\beta^2\|\bold{Y}_{k+1}\|_F^2 \leq \frac{\alpha^2 \delta_1^2 N}{25}$, which further implies
		\begin{equation}\label{t-4}
			\begin{aligned}
			E[f(\hat X_{k+1})]\leq & f(\hat X_0)-\frac{\beta}{2}\sum_{k=0}^K E[\|\bar g_k\|_F^2]+D_3\sigma^2\sum_{k=0}^K \frac{1}{N_k}+C_0\\
			+&\frac{C_1}{N}\beta^2\sum_{k=0}^K E[\|\bold{Y}_k\|_F^2],
			\end{aligned}
		\end{equation} where $C_0 = \frac{D_4 \Gamma_0\alpha^2 \delta_1^2}{25}+\Gamma_1(D_2+D_4), C_1 = D_2\Gamma_0+D_4\Gamma_0+D_1$.
		
		Next, we analyze the relation between $E[\|\bar g_k\|_F^2]$ and $E[\|\bold{Y}_k\|_F^2]$. By taking expectation of \eqref{3-19}, we have\begin{equation}\label{t-9}
			-E[\|\bar g_k\|_F^2] \leq \frac{\sigma^2}{N_k}-E[\|\bar F_k\|_F^2].
		\end{equation}Since  $\|\bold{Y}_k\|_F^2 \leq 2\|\bold{Y}_k -\bar{\bold{F}}_k\|_F^2+2\|\bar{\bold{F}}_k\|_F^2$, we can derive
		$$-\sum_{k=0}^K\|\bar F_k\|_F^2 = -\frac{1}{N}\sum_{k=0}^K\|\bar{\bold{F}}_k\|_F^2 \leq \frac{1}{N}\sum_{k=0}^K\|\bold{Y}_k-\bar{\bold{F}}_k\|_F^2-\frac{1}{2N}\sum_{k=0}^K\|\bold{Y}_k\|_F^2.$$
		By taking expectation of the above inequality and together with \eqref{t-9}, we have \begin{equation}\label{t-2}
		\begin{aligned}
		-\sum_{k=0}^KE[\|\bar g_k\|_F^2] \leq& \frac{1}{N}\sum_{k=0}^KE[\|\bold{Y}_k-\bar{\bold{F}}_k\|_F^2]\\
		-&\frac{1}{2N}\sum_{k=0}^KE[\|\bold{Y}_k\|_F^2]+\sum_{k=0}^K \frac{\sigma^2}{N_k}.
		\end{aligned}
		\end{equation}
		
		We proceed to give an upper bound on the first term of the right-hand side of \eqref{t-2}. Note that
		\begin{equation}\label{1-1}
		\begin{aligned}
        \|F(\bold{X}_{k+1})-F(\bold{X}_k)\|_F \leq &\|g (\bold{X}_{k+1}) - g (\bold{X}_{k})\|_F+ \|F(\bold{X}_k) - g(\bold{X}_k)\|_F\\
        +&\|F(\bold{X}_{k+1}) - g(\bold{X}_{k+1})\|_F.
		\end{aligned}
		\end{equation}
		By Lemma \ref{al-4}, we have \begin{equation}\label{1-2}
		\|g (\bold{X}_{k+1}) - g(\bold{X}_{k})\|_F \leq L_G\|\bold{X}_{k+1}-\bold{X}_k\|_F.
		\end{equation}
		Plugging in the update \eqref{re-u}, and by Lemma \ref{lem-1} and \eqref{9-2} in Lemma \ref{al-3}, we derive that $\|\bold{X}_{k+1}-\bold{X}_k\|_F \leq \alpha\|\text{grad} h_t(\bold{X}_k)\|_F+\beta \|\bold{v}_k\|_F \leq  2\alpha\|\bold{X}_k-\bold{\hat X}_k\|_F+\beta \|\bold{v}_k\|_F.$
		Since $\|\bold{v}_k\|_F \leq \|\bold{Y}_k\|_F$, we further have \begin{equation}\label{1-3}
		\|\bold{X}_{k+1}-\bold{X}_k\|_F \leq  2\alpha\|\bold{X}_k-\bold{\hat X}_k\|_F+\beta \|\bold{Y}_k\|_F.
		\end{equation}
		Combining \eqref{1-1}-\eqref{1-3}, we get \begin{equation*}
		\begin{aligned}
		\|F(\bold{X}_{k+1})-F(\bold{X}_k)\|_F \leq &2\alpha L_G\|\bold{X}_k-\bold{\hat X}_k\|_F+\beta L_G \|\bold{Y}_k\|_F\\
		+& \|F(\bold{X}_k) - g(\bold{X}_k)\|_F\\
		+&\|F(\bold{X}_{k+1}) - g(\bold{X}_{k+1})\|_F.
		\end{aligned}
		\end{equation*}
		
		Then, plugging the above result into the accumulated gradient tracking error \eqref{l-2} in Lemma \ref{lem-5} yields \begin{align*}
		\frac{1}{N}\sum_{k=0}^KE[\|\bold{Y}_k-\bar{\bold{F}}_k\|_F^2]\leq &\Gamma_2\frac{1}{N}\sum_{k=0}^K\Big(16\alpha^2 L_G^2E[\|\bold{X}_k-\bold{\hat X}_k\|_F^2]\\
		+&4\beta^2L_G^2E[\|\bold{Y}_k\|_F^2]\\
		+&4E[\|F(\bold{X}_{k+1})-g(\bold{X}_{k+1})\|_F^2]\\
		+&4E[\|F(\bold{X}_{k})-g(\bold{X}_{k})\|_F^2]\Big)+\Gamma_3,
		\end{align*}where the inequality also uses $\|a+b+c+d\|^2 \leq 4(\|a\|^2+\|b\|^2+\|c\|^2+\|d\|^2)$.
		Due to the bounded variance of noise characterized by \eqref{v-s} from Assumption \ref{assu-2}, it follows that\begin{equation*}
		\begin{aligned}
		\frac{1}{N}\sum_{k=0}^KE[\|\bold{Y}_k-\bar{\bold{F}}_k\|_F^2]\leq& \Gamma_3+\Gamma_2\frac{1}{N}\sum_{k=0}^K\Big(16\alpha^2 L_G^2E[\|\bold{X}_k-\bold{\hat X}_k\|_F^2]\\
		+&4\beta^2L_G^2E[\|\bold{Y}_k\|_F^2]+\frac{4N\sigma^2}{N_{k+1}}+\frac{4N\sigma^2}{N_{k}}\Big).
		\end{aligned}
		\end{equation*}
		This combined with \eqref{t-1} implies
		$$\begin{aligned}
		&\frac{1}{N}\sum_{k=0}^KE[\|\bold{Y}_k-\bar{\bold{F}}_k\|_F^2]\leq \frac{1}{N}\sum_{k=0}^K(16\alpha^2  \Gamma_2 \Gamma_0+4\Gamma_2)\beta^2 L_G^2E[\|\bold{Y}_k\|_F^2]\\
		+&4\Gamma_2\sigma^2 \sum_{k=0}^K (\frac{1}{N_{k+1}}+\frac{1}{N_{k}})
		+16\alpha^2 L_G^2 \Gamma_2 \Gamma_1+\Gamma_3.
		\end{aligned}$$
		By $t \geq [\log_{\sigma_2}(\frac{1}{2\sqrt{N}})]$, we get $\alpha^2 \Gamma_2 \leq \Gamma_2\leq \frac{2}{(1-\frac{1}{2 \sqrt{N}})^2} \leq 5$ for $\Gamma_2 = \frac{2}{(1-\sigma_2^t)^2}$ stated in Lemma \ref{lem-5}. This together with $\beta \leq \frac{\alpha \delta_1}{5(10A+L_G)}$ implies $(16\alpha^2  \Gamma_2 \Gamma_0+4\Gamma_2)\beta L_G \leq (16 \Gamma_0+\Gamma_2)\alpha \delta_1$, hence
		\begin{align*}
	    &\frac{1}{N}\sum_{k=0}^KE[\|\bold{Y}_k-\bar{\bold{F}}_k\|_F^2]\leq\frac{1}{N}\sum_{k=0}^K(16 \Gamma_0+\Gamma_2)\alpha \delta_1 \beta L_GE[\|\bold{Y}_k\|_F^2]\notag\\
	    +&4\Gamma_2\sigma^2 \sum_{k=0}^K (\frac{1}{N_{k+1}}+\frac{1}{N_{k}})+16\alpha^2 L_G^2 \Gamma_2 \Gamma_1+\Gamma_3.
		\end{align*}
		Substituting this into  \eqref{t-2} yields $$\begin{aligned}
		-\sum_{k=0}^KE[\|\bar g_k\|_F^2]  \leq & \Big[(16 \Gamma_0+\Gamma_2)\alpha \delta_1 \beta L_G-\frac{1}{2}]\frac{1}{N}\sum_{k=0}^KE[\|\bold{Y}_k\|_F^2\Big]\\
		+&\sigma^2 \sum_{k=0}^K (\frac{4\Gamma_2}{N_{k+1}}+\frac{4\Gamma_2+1}{N_{k}})+\Gamma_3+16\alpha^2 L_G^2 \Gamma_2 \Gamma_1.
		\end{aligned}$$
		
		Therefore, plugging the above result into \eqref{t-4} implies
		\begin{align*}
		&E[f(\hat X_{k+1})]\\
		\leq & f(\hat X_0)- \frac{\beta}{2N}\left[\frac{1}{2}-\beta(\alpha \delta_1L_G(16 \Gamma_0+\Gamma_2)+2C_1)\right] \sum_{k=0}^KE[\|\bold{Y}_k\|_F^2]\\
		+&\frac{\beta}{2}(\Gamma_3+16\alpha^2 L_G^2 \Gamma_2 \Gamma_1)+C_0\\
		+&D_3\sigma^2\sum_{k=0}^K \frac{1}{N_k}+ \sigma^2\beta \sum_{k=0}^K (\frac{2 \Gamma_2}{N_{k+1}}+\frac{2 \Gamma_2+\frac{1}{2}}{N_{k}}).
		\end{align*}
		Since $\beta \leq \frac{1}{4[\alpha \delta_1L_G(16 \Gamma_0+\Gamma_2)+2C_1]}$, it follows that \begin{align*}
		E[f(\hat X_{k+1})]
		\leq &f(\hat X_0)-\frac{\beta}{8}\frac{1}{N}\sum_{k=0}^KE[\|\bold{Y}_k\|_F^2]\\
		+&\sigma^2\sum_{k=0}^K\left(\frac{2\beta \Gamma_2+ D_3+\frac{\beta}{2}}{N_k}+\frac{2\beta \Gamma_2}{N_{k+1}}\right)+\tilde{\Gamma}_3,
		\end{align*}where $ \tilde{\Gamma}_3 := \frac{\beta}{2}(\Gamma_3+16\alpha^2 L_G^2 \Gamma_2 \Gamma_1) + C_0$. We then derive the upper bound of $E[\|\bold{Y}_k\|_F^2]$ with $f^* = \min_{X \in \text{St}(n,r)} f(X)$:
		\begin{equation}\label{y_k}
			\begin{aligned}
			&\frac{\beta}{8}\frac{1}{N}\sum_{k=0}^KE[\|\bold{Y}_k\|_F^2] \\
			\leq& f(\hat X_0)-f^*+\sigma^2\sum_{k=0}^K\left(\frac{2\beta \Gamma_2+D_3+\frac{\beta}{2}}{N_k}+\frac{2\beta\Gamma_2}{N_{k+1}}\right)+\tilde{\Gamma}_3.
			\end{aligned}
		\end{equation}
		By Assumption \ref{assu-2} and Lemma \ref{gt}, we can get $E[\|\bar g_k\|_F^2] = E[\|\bar g_k - \bar F_k\|_F^2]+E[\|\bar Y_k\|_F^2] \leq \frac{\sigma^2}{N_k} + E[\frac{1}{N}\sum_{i=1}^N\|Y_{i,k}\|_F^2]
		 = \frac{\sigma^2}{N_k}+\frac{1}{N}E[\|{\bold{Y}}_k\|_F^2]$ which then implies \begin{align*}
		&\frac{\beta}{8}\sum_{k=0}^KE[\|\bar g_k\|_F^2] \leq \frac{\beta}{8}\sum_{k=0}^K \frac{ \sigma^2}{N_k}+ \frac{\beta}{8}\frac{1}{N}\sum_{k=0}^KE[\|\bold{Y}_k\|_F^2] \\
		\leq& f(\hat X_0)-f^*+\sigma^2\sum_{k=0}^K\left(\frac{2\beta \Gamma_2+D_3+\frac{5\beta}{8}}{N_k}+\frac{2\beta \Gamma_2}{N_{k+1}}\right)+\tilde{\Gamma}_3.
		\end{align*}
		Denote $\tilde D_3: = D_3+\frac{5\beta}{8}$. By using \eqref{y_k} we further have  \begin{equation}\label{t-5}
		\begin{aligned}
		&\min_{k=0,\dots,K}E[\|\bar g_k\|_F^2] \leq \frac{1}{K+1}\sum_{k=0}^KE[\|\bar g_k\|_F^2]\\
		\leq &\frac{8\left( f(\hat X_0)-f^*+\tilde{\Gamma}_3+\sigma^2\sum_{k=0}^K(\frac{2\beta \Gamma_2+\tilde D_3}{N_k}+\frac{2\beta \Gamma_2}{N_{k+1}})\right)}{\beta \cdot (K+1)}.
		\end{aligned}
		\end{equation}
		
		Since $\min_{k=0,\dots, K}\frac{1}{N}E[\|\bold{X}_k-\bold{\hat X}_k\|_F^2]\leq \frac{1}{K+1}\sum_{k=0}^K\frac{1}{N}E[\|\bold{X}_k-\bold{\hat X}_k\|_F^2]$, we derive \eqref{18} by plugging \eqref{y_k} into \eqref{t-1}.
		By applying \eqref{lip}, we have
		\begin{align}\label{t-7}
		E\|\text{grad}f(\hat X_k)\|_F^2\leq &2\|\bar g_k\|_F^2+2E[\|\text{grad}f(\hat X_k) - \bar g_k\|_F^2] \notag\\
		\leq& 2\|\bar g_k\|_F^2+\frac{2L_G^2}{N}E[\|\bold{X}_k-\bold{\hat X}_k\|_F^2].
		\end{align}

		Since $\beta \leq \frac{\alpha \delta_1}{5(10A+L_G)}\leq \frac{\alpha \delta_1}{5L_G}$, we have $16 \Gamma_0 \beta^2 L_G^2 \leq \Gamma_0 \alpha^2 \delta_1^2$. Then by combining  \eqref{18}, \eqref{t-5}, and \eqref{t-7}, we finally derive \eqref{19}.
	\end{proof}
	
    The following theorem analyzes the impact of sample size $N_k$ on the convergence rate.
	\begin{theorem}\label{th-2}
		 Let  Assumption \ref{assu-1}-\ref{assu-3} hold. Consider Algorithm \ref{alg:1},  where  $\bold{X}_0 \in \mathcal{S}$ and  $t \geq [\log_{\sigma_2}(\frac{1}{2\sqrt{N}})]$. Suppose, in addition, that the step size $\alpha$ satisfies $\alpha \leq \min\{1,\frac{1}{M},\frac{\Phi}{L_t}\}$ with $\Phi $ and $L_t$ defined in Lemma \ref{al-6} and Lemma \ref{al-3}, and step size $\beta$ satisfies $$\beta \leq \min\{\bar \beta,\frac{1}{9L_G}, \frac{1}{4[\alpha \delta_1L_G(16 \Gamma_0+\Gamma_2)+2C_1]}\},$$where $\bar \beta$ is given in Lemma \ref{lem-3}. Then, we have
		
		\noindent (i) When $N_k = [q^{-k}]$, where $q \in(0,1)$, it holds
			\begin{align}\label{20}
		    &\min_{k=0,\dots, K}E[\|\text{grad}f(\hat X_k)\|_F^2] \\
		    \leq& \frac{(16+\Gamma_0 \alpha^2\delta_1^2)\left[f(\hat X_0)-f^*+\tilde{\Gamma}_3+\frac{\sigma^2(2\beta\Gamma_2(1+q)+\tilde D_3)}{1-q}\right]+\tilde \Gamma_1}{\beta \cdot  (K+1)}; \notag
			\end{align}
			(ii) When $N_k = [(k+1)^a](a>0)$, for $a=1$, it holds
			\begin{align}\label{21-1}
			\min_{k=0,\dots, K}E[\|\text{grad}f&(\hat X_k)\|_F^2] \leq\frac{\tilde \Gamma_1+(16+\Gamma_0 \alpha^2\delta_1^2)[ f(\hat X_0)-f^*+\tilde{\Gamma}_3}{\beta \cdot  (K+1)}\notag\\
			&+\frac{2\sigma^2(4\beta\Gamma_2+\tilde D_3)(\ln(K+1)+1)]}{\beta \cdot  (K+1)};
			\end{align}
			and for $a\neq 1$, it holds
			\begin{align}\label{21-2}
			&\min_{k=0,\dots, K}E[\|\text{grad}f(\hat X_k)\|_F^2]\leq\frac{\sigma^2(16+\Gamma_0 \alpha^2\delta_1^2)(6 \Gamma_2+\tilde D_3)}{\beta(1-a)(K+1)^a} \notag \\
			+&\frac{(16+\Gamma_0 \alpha^2\delta_1^2)[f(\hat X_0)-f^*+\tilde{\Gamma}_3]+\tilde \Gamma_1}{\beta \cdot  (K+1)},\text{ if }a<1; \notag \\
			&\min_{k=0,\dots, K}E[\|\text{grad}f(\hat X_k)\|_F^2] \leq \frac{\sigma^2(16+\Gamma_0 \alpha^2\delta_1^2)(3\Gamma_2a+\Gamma_2+\frac{a \tilde D_3}{\beta})}{(a-1)(K+1)} \notag\\
			+&\frac{(16+\Gamma_0 \alpha^2\delta_1^2)[f(\hat X_0)-f^*+\tilde{\Gamma}_3]+\tilde \Gamma_1}{\beta \cdot  (K+1)}, \text{ if }a>1;
			\end{align}
			(iii) When $N_k = Q$, where $Q >0$ is a constant, it holds
			\begin{align}\label{22}
			&\min_{k=0,\dots, K}E[\|\text{grad}f(\hat X_k)\|_F^2]\leq(16+\Gamma_0 \alpha^2\delta_1^2)\frac{\sigma^2(4\beta\Gamma_2+\tilde D_3)}{\beta Q} \notag \\
			+&\frac{\tilde \Gamma_1+(16+\Gamma_0 \alpha^2\delta_1^2)[f(\hat X_0)-f^*+\tilde{\Gamma}_1]}{\beta \cdot  (K+1)}.
			\end{align}
	\end{theorem}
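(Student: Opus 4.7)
The plan is to reduce each of the three cases to the single bound \eqref{19} from Theorem~\ref{th-1}, which expresses $\min_{k} E[\|\text{grad} f(\hat X_k)\|_F^2]$ as a deterministic $\mathcal{O}(1/(K+1))$ term plus the variance term
\[
\frac{\sigma^2}{\beta(K+1)}\sum_{k=0}^K\!\left(\tfrac{2\beta\Gamma_2+\tilde D_3}{N_k}+\tfrac{2\beta\Gamma_2}{N_{k+1}}\right).
\]
All hypotheses of Theorem~\ref{th-1} are verbatim those of Theorem~\ref{th-2}, so \eqref{19} is available and the entire task reduces to estimating $S_K:=\sum_{k=0}^K 1/N_k$ and the shifted version $\sum_{k=0}^K 1/N_{k+1}$ under each sampling rule, then substituting.

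For case~(i) with $N_k=\lceil q^{-k}\rceil$, I would use $1/N_k\le q^k$ to bound $S_K$ by the geometric tail $\sum_{k=0}^\infty q^k=1/(1-q)$, and similarly $\sum_{k=0}^K 1/N_{k+1}\le q/(1-q)$. Combining the two yields the numerator $2\beta\Gamma_2(1+q)+\tilde D_3$ divided by $1-q$, which plugged into \eqref{19} gives \eqref{20}, an honest $\mathcal{O}(1/(K+1))$ rate matching the deterministic setting because the variance contribution is a bounded constant.

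For case~(ii) with $N_k=\lceil (k+1)^a\rceil$, I would bound $S_K$ by comparing the sum with the integral of $x^{-a}$. This splits into three sub-cases: (a) $a=1$ gives the harmonic sum, bounded by $\ln(K+1)+1$, producing the logarithmic factor in \eqref{21-1}; (b) $a<1$ gives $\int_0^{K+1}x^{-a}dx=(K+1)^{1-a}/(1-a)$, which after division by $(K+1)$ yields the slower $1/(K+1)^a$ dominant term; (c) $a>1$ gives $\sum_{k\ge 0}(k+1)^{-a}\le a/(a-1)$, a bounded constant, restoring the $1/(K+1)$ rate. The shifted sum $\sum 1/N_{k+1}$ is handled identically up to reindexing and contributes the extra $\Gamma_2$ factors visible in \eqref{21-2}; some care with the constants (showing, e.g., that $2\cdot(2\beta\Gamma_2) + (2\beta\Gamma_2+\tilde D_3) = 6\beta\Gamma_2+\tilde D_3$ and packaging as $(6\Gamma_2+\tilde D_3/\beta)$) is routine bookkeeping. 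Case~(iii) with $N_k\equiv Q$ is the simplest: both sums equal $(K+1)/Q$ and $K/Q\le (K+1)/Q$, so the variance contribution becomes a constant $\sigma^2(4\beta\Gamma_2+\tilde D_3)/(\beta Q)$ that does not decay with $K$, which explains convergence only to a neighborhood and yields \eqref{22}.

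The main obstacle will not be any single estimate but rather the careful accounting of constants so that the final expressions agree with \eqref{20}--\eqref{22} exactly, in particular merging the two sums $\sum 1/N_k$ and $\sum 1/N_{k+1}$ into a single coefficient in front of $\Gamma_2$ for each sub-case of (ii), and ensuring the integral comparison for $a<1$ uses $\int_0^{K+1}$ rather than $\int_1^{K+1}$ so that the leading constant is $1/(1-a)$ without extra lower-order terms. Once those constants are matched, substitution into \eqref{19} finishes the proof.
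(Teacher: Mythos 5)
Your proposal is correct and follows essentially the same route as the paper's own proof: both apply the bound \eqref{19} from Theorem~\ref{th-1} and then estimate $\sum_{k=0}^K 1/N_k$ and $\sum_{k=0}^K 1/N_{k+1}$ via the geometric series for case (i), the integral comparison with $s(x)=(x+1)^{-a}$ split into the sub-cases $a=1$, $a<1$, $a>1$ for case (ii), and direct computation for case (iii). The remaining work is only the constant bookkeeping you already flag, which matches what the paper does.
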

	
	\begin{proof}
			(i) If  $N_k= [q^{-k}]$, then $\sum_{k=0}^K \frac{1}{N_k} \leq  \sum_{k=0}^K q^k = \frac{1-q^{K+1}}{1-q}\leq \frac{1}{1-q}$.
			By the same token, it holds $\sum_{k=0}^K \frac{1}{N_{k+1}} \leq  \sum_{k=0}^K q^{k+1} \leq  \frac{q}{1-q}.$
			
			Plugging the above results into \eqref{19} we derive \eqref{20}.
			
			(ii)  If $N_k = [(k+1)^a] (a>0)$, then $\sum_{k=0}^K \frac{1}{N_k} \leq \sum_{k=0}^K \frac{1}{(k+1)^a}$.
			Denote a function $s(x) = \frac{1}{(x+1)^a} (a>0)$, and note that $s(x)$ is decreasing with respect to $x$, i.e., for all $x_1>k>x_2$, $s(x_1)<\frac{1}{N_k}<s(x_2)$. Therefore, we have
			\begin{align*}
			\sum_{k=0}^K \frac{1}{(k+1)^a} \leq & 1+ \int_0^K s(x) dx = s(0)+ \int_0^K \frac{1}{(x+1)^a} dx\\
			= & \left\{\begin{array}{l}
			1+\ln(K+1),\ a=1\\
			\frac{(K+1)^{-a+1} - a}{1-a},\ a\neq 1.
			\end{array}\right.
			\end{align*}
			Similarly, we have $\sum_{k=0}^K \frac{1}{N_{k+1}} \leq  \left\{\begin{array}{l}
			1/2+\ln(\frac{K}{2}+1),\ a=1\\
			\frac{(K+2)^{-a+1} - (1+a)2 ^{-a}}{1-a},\ a\neq 0
			\end{array}.\right.$
			
			When $a=1$, since $ \ln(K/2+1)\leq \ln(K+1)$, it follows that $$\sum_{k=0}^K \frac{1}{N_{k+1}}\leq \sum_{k=0}^K \frac{1}{N_{k}}\leq 1+\ln(K+1).$$ Plugging the results to \eqref{19} implies \eqref{21-1}.
			
			When $a \neq 1$, we consider the noise term  \begin{align*}
			e_K:=&\frac{\sigma^2\sum_{k=0}^K\left(\frac{2\beta\Gamma_2+\tilde D_3}{N_k}+\frac{2\beta\Gamma_2}{N_{k+1}}\right)}{\beta \cdot (K+1)}.
			\end{align*}
			If $a < 1$, as $K+2 \leq 2(K+1)$, then \begin{equation}\label{23-1}
			\begin{aligned}
			e_K \leq& \sigma^2 \left[\frac{2\Gamma_2+\frac{\tilde D_3}{\beta}}{1-a}\cdot \frac{1}{(K+1)^a}+\frac{2\Gamma_2}{1-a} \cdot \frac{(K+2)^{-a+1}}{K+1}\right]\\
			\leq& \frac{\sigma^2(6\Gamma_2+\frac{\tilde D_3}{\beta})}{1-a}\cdot \frac{1}{(K+1)^a};
			\end{aligned}
			\end{equation}
			If $a>1$, as $K+2 \leq 2(K+1)$ and $1-a<0$, then \begin{equation}\label{23-2}
			\begin{aligned}
			e_K \leq \frac{\sigma^2(3\Gamma_2a+\Gamma_2+\frac{a \tilde D_3}{\beta})}{(a-1)(K+1)}.
			\end{aligned}
			\end{equation}
			By plugging \eqref{23-1} and \eqref{23-2} into \eqref{19}, respectively, we get \eqref{21-2}.
			
			(iii) If $N_k = Q$, where $Q$ is a constant. By \eqref{19}, we have $e_K \leq \frac{\sigma^2(4\beta\Gamma_2+\tilde D_3)}{\beta Q}$, which yields \eqref{22}.
	\end{proof}
	
	\begin{remark}
		Here we make some discussions on Theorem \ref{th-2}. When the sample size increases exponentially, the convergence rate is $\mathcal{O}(\frac{1}{k})$ in expectation, which is comparable to that of the deterministic algorithm in \cite{pmlr-v139-chen21g}. If the sample size increases in a polynomial rate, then the convergence rate corresponds to the parameter $a$ in $[(k+1)^a]$. When $a>1$, the convergence is dominated by the term without noise, which achieves the rate of $\mathcal{O}(\frac{1}{k})$; when $a=1$, the convergence rate is dominated by the noise term, which is bounded by $\mathcal{O}(\frac{\ln k}{k})$ in expectation; when $a<1$, the convergence rate is also dominated by the noise term, which is $\mathcal{O}(\frac{1}{k^a})$ in expectation. When the sample size is fixed, the iterates converge to a neighborhood of the stationary point with constant step sizes. This neighborhood is caused by the stochastic noise and also depends on the sample-size.
	\end{remark}
	
	$\bold{X} = [X_1^\top, \dots, X_N^\top]^\top$ is called an $\epsilon$-stationary point if
	\begin{equation}\label{epsilon}
	\frac{1}{N} E[\|\bold{X}-\hat{\bold X}\|_F^2] \leq \epsilon,\ E[\|\text{grad}f(\hat X)\|_F^2] \leq \epsilon.
	\end{equation}
	Then  based on Theorem \ref{th-2}, we analyze the bound on the number of iterations $K(\epsilon)$ (namely, iteration complexity), the number of sampled gradients (namely, oracle complexity), and the communication cost to achieve an $\epsilon$-stationary point ensuring  that the expected squared norm of stochastic gradient $E[\|\text{grad}f(\hat X)\|_F^2] \leq \epsilon
	$, where $\epsilon$ shows the algorithm accuracy. We formally state the results in the following corollary.

	\begin{corollary}\label{co-1}
	With the same conditions of Theorem \ref{th-2}, and let $C:=\frac{\sigma^2(16+\Gamma_0 \alpha^2\delta_1^2)(4\beta\Gamma_2+\tilde D_3)}{\beta Q}$. Denote by $|\mathcal{E}|$ the number of network edges. The iteration and oracle complexity, and the number of communications required to obtain an  $\epsilon$ (or $\epsilon +C$)-stationary point for the variable sample sizes are shown in Table 2.
	
	\renewcommand{\arraystretch}{1.2}
	\vspace{-0.5cm}
	\begin{table}[h!]
		\centering
		\caption{Iteration and oracle complexity, and communications }
	   	\begin{tabularx}{\columnwidth}{c|c|c|c}
			\hline
			$N_k$& iteration& oracle  & communication\\ \hline
			$[q^{-k}](q \in(0,1))$ & $\mathcal{O}(\frac{1}{\epsilon})$ & $\mathcal{O}(e^{\frac{c_1}{\epsilon} \ln (1/q)})$ & $\mathcal{O}(\frac{t|\mathcal{E}|}{\epsilon})$\\ \hline
			$[(k+1)^a](a=1)$ & $\mathcal{O}(\frac{1}{\epsilon}\ln\frac{1}{\epsilon})$ & $\mathcal{O}(\frac{1}{\epsilon^2}\ln^2\frac{1}{\epsilon})$ &$\mathcal{O}(\frac{t|\mathcal{E}|}{\epsilon}\ln\frac{1}{\epsilon})$ \\ \hline
			$[(k+1)^a](a<1)$& $\mathcal{O}(\frac{1}{\epsilon^{1/a}})$& $\mathcal{O}(\frac{1}{\epsilon^{1+1/a}})$ &$\mathcal{O}(\frac{t|\mathcal{E}|}{\epsilon^{1/a}})$\\ \hline
			$[(k+1)^a](a>1)$& $\mathcal{O}(\frac{1}{\epsilon})$ & $\mathcal{O}(\frac{1}{\epsilon^{a+1}})$ & $\mathcal{O}(\frac{t|\mathcal{E}|}{\epsilon})$\\ \hline
			$Q>0$  & $\mathcal{O}(\frac{1}{\epsilon})$ & $\mathcal{O}(\frac{Q}{\epsilon})$ & $\mathcal{O}(\frac{t|\mathcal{E}|}{\epsilon})$\\ \hline
		\end{tabularx}
	\end{table}
	\end{corollary}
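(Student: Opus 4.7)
The plan is to read off, for each sample-size schedule in Theorem \ref{th-2}, the smallest iteration count $K(\epsilon)$ that drives both quantities in \eqref{epsilon} below the target accuracy, then translate this into oracle calls by summing $N_k$ up to $K(\epsilon)$, and into communications by multiplying $K(\epsilon)$ by the per-round cost $t|\mathcal{E}|$ (since each iteration performs $t$ multi-step consensus sweeps, each transmitting once per edge of $\mathcal{G}$).

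First I would verify that imposing $\epsilon$-stationarity reduces to inverting the rate bounds in \eqref{20}--\eqref{22} and \eqref{18}. The right-hand side of \eqref{19} and of \eqref{18} differ only in multiplicative constants and share the same noise-in-sample-size structure $\sigma^2\sum_{k=0}^K(1/N_k+1/N_{k+1})$; hence the iteration counts derived from \eqref{19} automatically satisfy the consensus bound \eqref{18}, and I will note this at the start to avoid doing the analysis twice. Then for each row of the table I would invert the rate: \eqref{20} yields $\mathcal{O}(1/K)$, giving $K(\epsilon)=\mathcal{O}(1/\epsilon)$; \eqref{21-1} yields $\mathcal{O}((\ln K)/K)$, giving $K(\epsilon)=\mathcal{O}((1/\epsilon)\ln(1/\epsilon))$; the first branch of \eqref{21-2} is dominated by $\mathcal{O}(1/K^a)$ for $a<1$, yielding $K(\epsilon)=\mathcal{O}(\epsilon^{-1/a})$; the second branch for $a>1$ is $\mathcal{O}(1/K)$, yielding $K(\epsilon)=\mathcal{O}(1/\epsilon)$; and \eqref{22} contains the noise floor $C$, so an $(\epsilon+C)$-stationary point is reached once the vanishing term drops below $\epsilon$, again at $K(\epsilon)=\mathcal{O}(1/\epsilon)$.

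Second I would compute $\sum_{k=0}^{K(\epsilon)}N_k$ in closed form. For $N_k=[q^{-k}]$ the geometric sum is dominated by its last term, giving oracle cost $\mathcal{O}(q^{-K(\epsilon)})=\mathcal{O}(\exp(K(\epsilon)\ln(1/q)))=\mathcal{O}(e^{(c_1/\epsilon)\ln(1/q)})$ where $c_1$ absorbs the implicit constant in $K(\epsilon)$. For $N_k=[(k+1)^a]$ the standard integral estimate $\sum_{k=0}^K(k+1)^a=\Theta(K^{a+1})$ combined with the three values of $K(\epsilon)$ above yields, respectively, $\mathcal{O}(\epsilon^{-2}\ln^2(1/\epsilon))$ for $a=1$, $\mathcal{O}(\epsilon^{-(1+1/a)})$ for $a<1$, and $\mathcal{O}(\epsilon^{-(a+1)})$ for $a>1$. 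For constant $N_k=Q$ the sum is trivially $Q\cdot K(\epsilon)=\mathcal{O}(Q/\epsilon)$. The communication column is then obtained by multiplying each $K(\epsilon)$ by $t|\mathcal{E}|$.

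The main obstacle is bookkeeping rather than any new analytical difficulty: the rate bounds \eqref{18}--\eqref{22} carry many constants $(\Gamma_0,\Gamma_1,\Gamma_2,\tilde\Gamma_3,\tilde D_3,C_0,C_1)$ and one must be careful to fold them into the $\mathcal{O}(\cdot)$ notation without losing the $\epsilon$-dependence, especially in the exponential case where the constant $c_1$ inside the exponent is exactly the implicit constant of $K(\epsilon)$ produced by \eqref{20}. A secondary subtlety is justifying that the constant-sample-size regime cannot reach an exactly $\epsilon$-stationary point but only an $(\epsilon+C)$-stationary one, so the complexity bounds in the last row must be read with this adjusted target, as captured by the definition of $C$ in the corollary statement.
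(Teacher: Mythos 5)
Your proposal is correct and follows essentially the same route as the paper's own proof: invert the rate bounds of Theorem \ref{th-2} to get $K(\epsilon)$, bound $\sum_{k=0}^{K(\epsilon)}N_k$ by a geometric sum or the integral estimate $\Theta(K^{a+1})$, multiply the iteration count by the per-iteration communication cost $t|\mathcal{E}|$, and treat the constant-sample-size case as reaching only an $(\epsilon+C)$-stationary point. Your additional observation that the consensus bound \eqref{18} is automatically satisfied by the same $K(\epsilon)$ is a small but harmless extra step the paper leaves implicit.
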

	\begin{proof}
	(i) $N_k = [q^{-k}]$: Based on Theorem \ref{th-2}, there exists $c_1>0$ such that $\min_{k=0,\dots, K}E[\|\text{grad}f(\hat X_k)\|_F^2] \leq \frac{c_1}{K+1}$. For any $k +1 \geq K(\epsilon) + 1= \frac{c_1}{\epsilon}$, we obtain $\min_{k=0,\dots, K}E[\|\text{grad}f(\hat X_k)\|_F^2] \leq \epsilon$. Then, the bound of the oracle complexity is derived by \[\sum_{k=0}^{K(\epsilon)} N_k = \sum_{k=0}^{K(\epsilon)} q^{-k} \leq \frac{q^{-(K(\epsilon)+1)}}{q^{-1}  - 1} \leq \frac{1}{1-q}q^{-\frac{c_1}{\epsilon}}= \frac{1}{1-q}e^{\frac{c_1}{\epsilon} \ln (1/q)}.\]
	
	(ii) $N_k = [(k+1)^a]$: For $a=1$, by Theorem \ref{th-2}, there exists $c_2>0$ such that $\min_{k=0,\dots, K}E[\|\text{grad}f(\hat X_k)\|_F^2] \leq \frac{c_2\ln (K+1)}{K+1}$. We achieve an $\epsilon$-stationary solution for any $k +1\geq K(\epsilon) +1= O(\frac{1}{\epsilon} \ln \frac{1}{\epsilon})$, which then implies \[ \sum_{k=0}^{K(\epsilon)} (k+1) = \frac{(K(\epsilon)+2)(K(\epsilon)+1)}{2} \leq (K(\epsilon)+1)^2 = \mathcal{O}( \frac{1}{\epsilon^2}\ln^2\frac{1}{\epsilon}). \]
	Similarly, for $a< 1$ and $a>1$, there exist $c_3, c_4 >0$ such that $\min_{k=0,\dots, K}E[\|\text{grad}f(\hat X_k)\|_F^2] \leq \frac{c_3}{(K+1)^a}$ and $\min_{k=0,\dots, K}E[\|\text{grad}f(\hat X_k)\|_F^2] \leq \frac{c_4}{K+1}$, respectively. Then, the corresponding iteration bound is $K(\epsilon ) +1= (\frac{c_3}{\epsilon})^{1/a}$ and $\frac{c_4}{\epsilon}$. By plugging these results into \[\sum_{k=0}^{K(\epsilon)} (k+1)^a\leq \int_0^{K(\epsilon)+1} (x+1)^a dx\leq  \frac{(K(\epsilon)+2)^{a+1}}{a+1}, \] we derive the bound of the oracle number, respectively.
	
	(iii) $N_k = Q>0$: By Theorem \ref{th-2}, there exists $c_5>0$ such that $\min_{k=0,\dots, K}E[\|\text{grad}f(\hat X_k)\|_F^2] \leq \frac{c_5}{K+1}+C$, which implies that $K(\epsilon ) +1= \frac{c_5}{\epsilon}$ and $\sum_{k=0}^{K(\epsilon)} N_k = \frac{c_5 Q}{\epsilon}$.
	
	The multi-agent system requires $2t|\mathcal{E}|$ communication rounds in one iteration, so multiplying the iteration complexity by $2t|\mathcal{E}|$ directly derives the total number of communications.
	\end{proof}
	
	\begin{remark}
	Though the variable sampling scheme can improve the convergence rate and performance of Algorithm \ref{alg:1}, we cannot increase the sample size blindly, since it leads to high oracle complexity. According to Table 2 in Corollary \ref{co-1}, $N_k = [(k+1)^a](a>1)$ should be the optimal choice as the oracle complexity is not extremely high while ensuring the best iteration complexity $\mathcal{O}(\frac{1}{\epsilon})$ among all settings.  Compared with   the DRSGD scheme \cite{pmlr-v139-chen21g}, our scheme can  significantly reduce  the number of projection and retraction computation steps in \eqref{4} and \eqref{v}, as well as the communication rounds, while slightly increase the number of sampling. Therefore, the results present a trade-off between the sample size and communication costs, and demonstrate that our algorithm is competitive in the scenarios where computation and communication costs are expensive   than the gradient samplings.
	\end{remark}
	
	\section{Numerical Experiments}\label{sec-5}
	
			\begin{figure*} [!h]
				\centering
				\setlength{\abovecaptionskip}{0cm}
				\includegraphics[width=2\columnwidth]{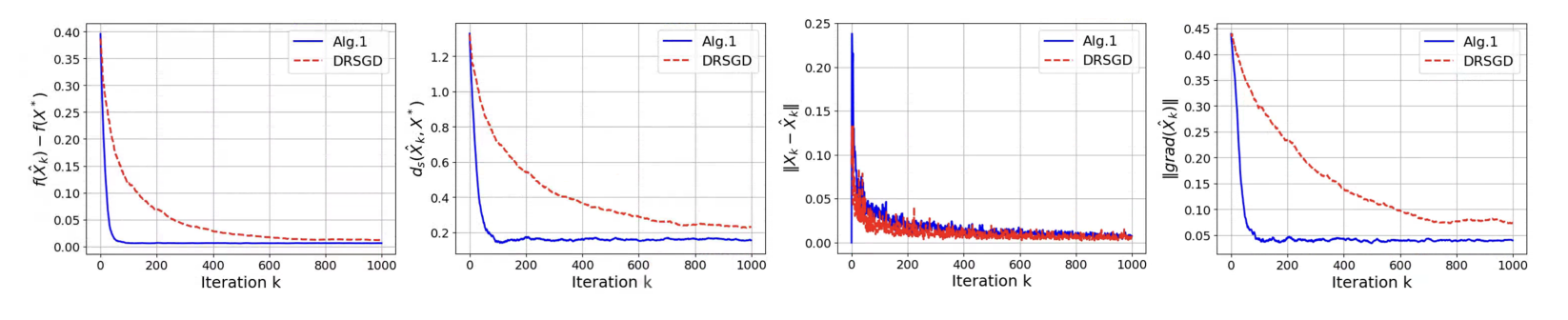}
				\caption{Numerical results on synthetic dataset with DRSGD and Alg.\ref{alg:1}, eigengap $\triangle = 0.8$, Graph:ring, t= 1.}
				\label{fig1}
			\end{figure*}	

	In this section, we conduct some experiments to show the empirical performance of Algorithm \ref{alg:1} on the principal component analysis problems.
	
	Principal component analysis (PCA) dimension reduction is with extensive applications in machine learning. It can be formulated as follows:
	\begin{equation}\label{6-1}
	\begin{aligned}
	\min_{X_i \in \text{St}(n,r)}&\ -\frac{1}{2N}\sum_{i=1}^N X_i^\top A_i^\top A_i X_i,\\
	s.t.&\ X_1=\dots=X_N,
	\end{aligned}
	\end{equation}
	where $A_i \in \mathbb{R}^{m_i \times n}$ is the local data matrix of agent $i$ and $m_i$ is the data size. Denote by $A:=[A_1^\top, \dots, A_N^\top]$ the global data matrix. We also utilize the Bluefog \cite{bluefog} package to conduct a distributed structure and Pymanopt \cite{townsend2016pymanopt} toolbox to compute on the Stiefel manifold.
	
	We run $N = 4$ CPU nodes and fix $m_1 = \dots = m_4 = 2500$, $n = 8$ and $r = 3$. We generate the data matrix $A$ including $2500N \times n$ i.i.d samples following standard multi-variate Gaussian distribution, and $A_i$ is obtained by distributing $A$ into the $4$ CPU nodes.
	As $X^*Q$ is also a solution of \eqref{6-1} with an orthogonal matrix $Q \in \mathbb{R}^{r\times r}$, we define by $d_s(X,X^*) = \min_{Q \in \mathbb{R}^{r\times r}, Q^\top Q = Q Q^\top = I_r} \|XQ - X^*\|$ to measure the distance between $X$ and $X^*$.
	The algorithms are measured by four metrics, i.e., the objective function $f(\hat X_k)- f^*$, the distance to the global optimum $d_s(X_k,X^*)$, the consensus error $\|\bold{X}_k - \bold{ \hat X}_k\|$, and the gradient norm $\|\text{grad}f(\hat X_k)\|$, respectively.
	
	 We then compare our algorithm with the state-of-art distributed stochastic optimization algorithm DRSGD in \cite{pmlr-v139-chen21g} and receive the results in Fig.\ref{fig1}. Here we let the communication graph among the four agents to be ring graph.  Furthermore, we set the variable sample size of Alg.\ref{alg:1} to be $N_k = k+1$ which increases in a polynomial rate. Let the multi-steps of consensus $t=1$, the step sizes $\alpha = 1$, and $\beta = 0.1$ for Alg.\ref{alg:1} and be diminishing for DRSGD, respectively. We set the maximum iteration rounds to $1000$ and run Alg.\ref{alg:1}. The results can be seen in Fig.\ref{fig1} which illustrate that Alg.\ref{alg:1} converges faster than DRSGD. Although we have equipped a multi-step communication to achieve the theoretic consensus of the algorithm, the empirical results in Fig.\ref{fig1} show that both of the algorithms achieve consensus with only one communication step in practice.
	 
	 		\begin{figure} [!h]
	 			\centering
	 			\setlength{\abovecaptionskip}{0cm}
	 			\includegraphics[width=1.9in]{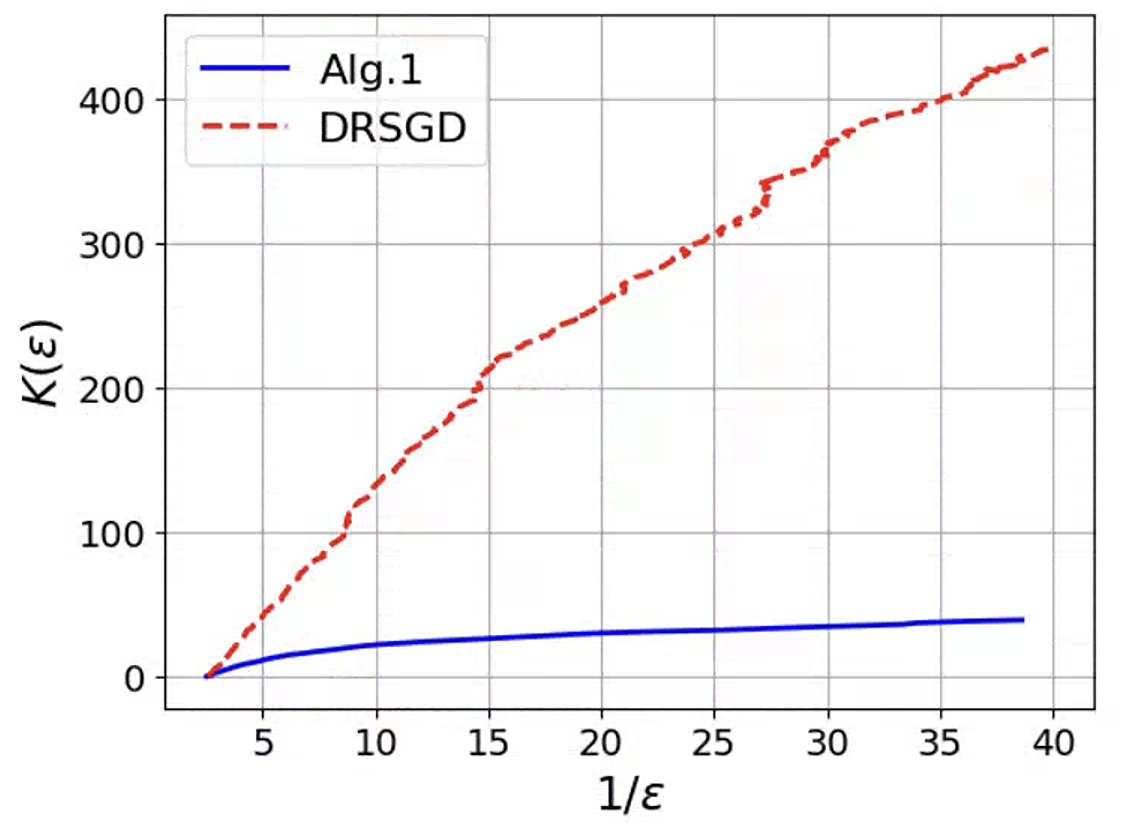}
	 			\caption{Iteration complexity of Alg.\ref{alg:1} and DRSGD}
	 			\label{fig3}
	 		\end{figure}
	 		
	 		\begin{figure}[!h]
	 			\centering
	 			\setlength{\abovecaptionskip}{0cm}
	 			\includegraphics[width=1.9in]{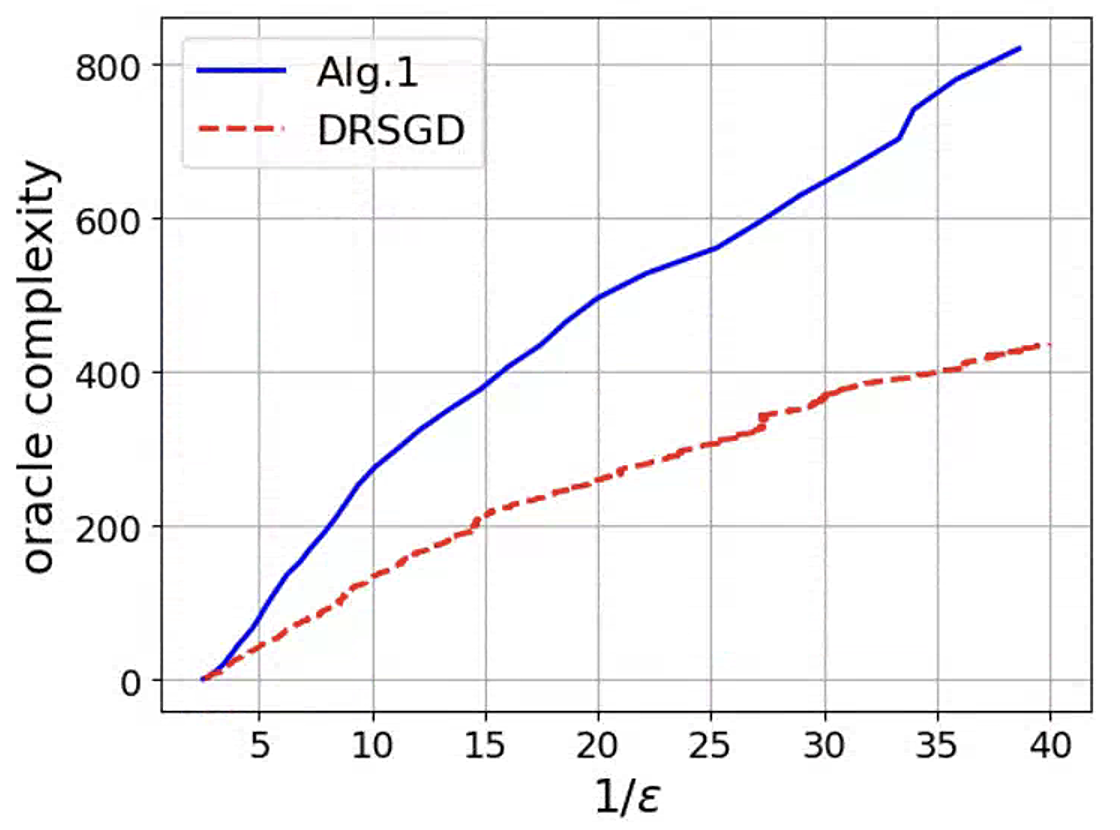}
	 			\caption{Oracle complexity of Alg.\ref{alg:1} and DRSGD}
	 			\label{fig4}
	 		\end{figure}

	We further compare the iteration and oracle complexity of the two algorithms under the above settings. As shown in Fig.\ref{fig3}, to achieve an $\epsilon$-stationary point with the same accuracy, Alg.\ref{alg:1} requires much less iteration rounds than DRSGD, which also means it can significantly reduce the communication rounds. However, Fig.\ref{fig4} shows that Alg.\ref{alg:1} needs more sampled gradients than DRSGD. The results illustrate that our algorithm can significantly reduce the communication and computation costs by partly sacrificing the amount of gradient samples.
	
	In addition, we conduct experiment to see the impact of the sample sizes. We run Alg.\ref{alg:1} with step sizes $\alpha  = 1, \beta = 0.1$ and let $N_k =1, k+1, [0.9^{-k}]$ and $N_k = [0.85^{-k}], [0.9^{-k}], [0.95^{-k}]$, respectively. The performance of algorithm for both cases are displayed in Fig.\ref{fig2} and Fig.\ref{fig5}  which illustrate that a faster increasing sample size leads to a better convergence rate and stability, meanwhile costs more sampled data and heavier computations per one iteration.
		\begin{figure} [!h]
			\centering
			\setlength{\abovecaptionskip}{0cm}
			\includegraphics[width=1.9in]{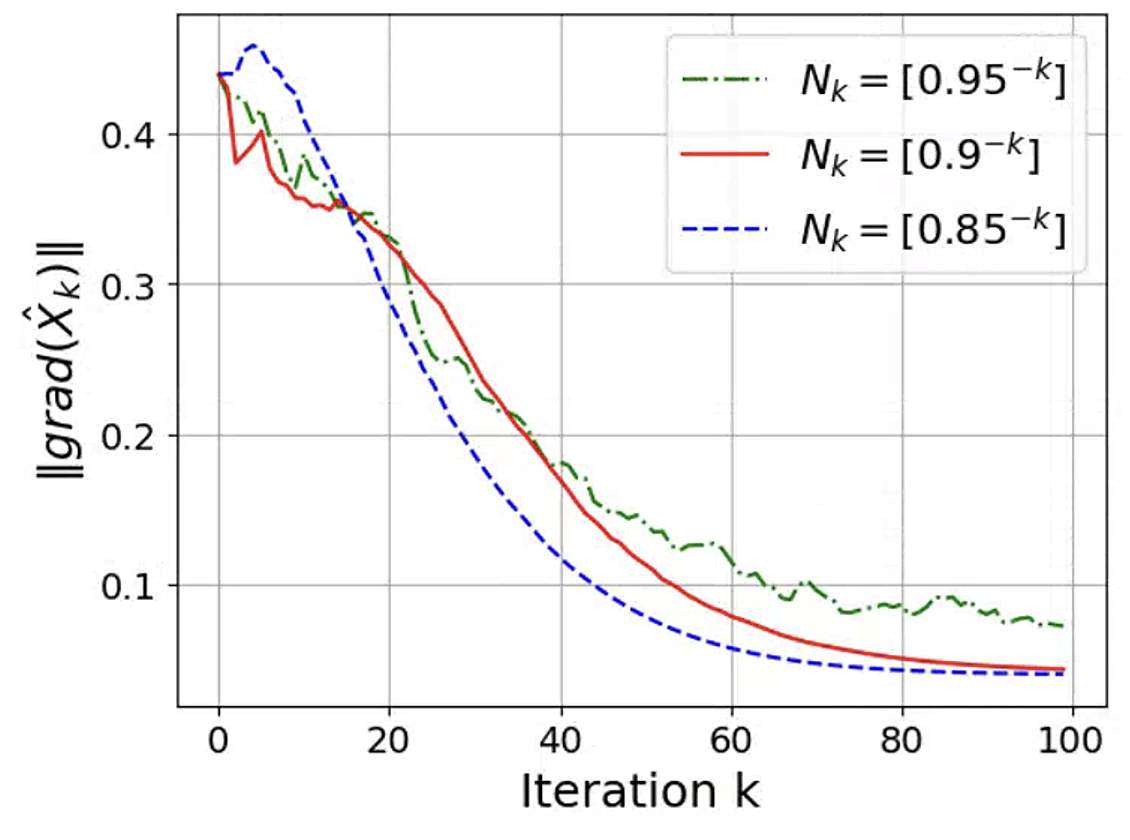}
			\caption{The performance of Alg.\ref{alg:1} with $N_k = [0.85^{-k}], [0.9^{-k}], [0.95^{-k}]$ under a ring graph.}
			\label{fig2}
		\end{figure}
		\begin{figure} [!h]
			\centering
			\setlength{\abovecaptionskip}{0cm}
			\includegraphics[width=2.0in]{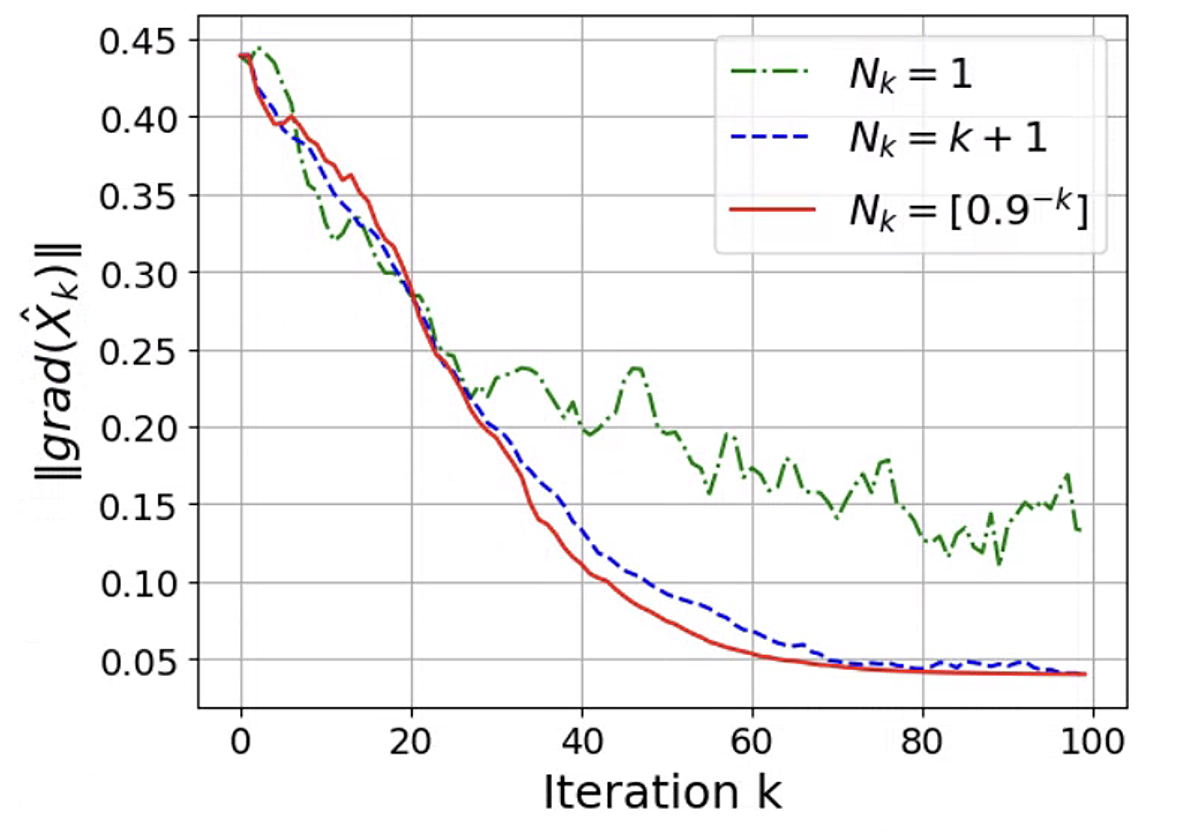}
			\caption{The performance of Alg.\ref{alg:1} with $N_k =1, k+1, [0.9^{-k}]$ under a ring graph.}
			\label{fig5}
		\end{figure}
	
	\section{Conclusion}\label{sec-6}

In this paper, we have proposed a distributed Riemannian stochastic gradient tracking algorithm with variable sample sizes for optimization on the Stiefel manifold over connected networks. If the agents are set to start from a suitably defined local region, we prove that the iterates of all agents always remain in this region and converge to a stationary point (or neighborhood) with fixed step sizes in expectation. We further show that the convergence rate can be affected by the increasing sample size, which can reduce the noise variance. The convergence rate of the iterates with a exponentially increasing sample size is comparable to the non-stochastic framework on the Stiefel manifold. We also give analysis on the convergence rate with polynomially increasing sample size and a constant sample size. Based on the convergence results, we establish the iteration, oracle, and communication complexity, and present the trade-off between communication costs and gradient sampling. Finally, we conduct numerical experiments that demonstrate the effectiveness of the algorithm and the theoretical results. The extension to nonsmooth framework or general compact submanifolds are promising future research directions.

\bibliographystyle{plain}
\bibliography{reference}

\appendix
\section{Proof of Lemma \ref{lem-5}}\label{app_1}
		(i) We begin with analyzing the gradient tracking error of $k+1$.  Denote $J = \frac{1}{N}\bm{1}_N\bm{1}_N^\top$. According to Lemma \ref{gt}, we have \begin{align*}
		\bold{Y}_{k+1} - \bar{\bold{F}}_{k+1} =& (I_N \otimes I_n)\bold{Y}_{k+1} - (J \otimes I_n)\bold{Y}_{k+1} \\
		=& ((I_N - J) \otimes I_n)\bold{Y}_{k+1}.
		\end{align*}
		By plugging \eqref{g-t} in the above inequality, it follows that\begin{align*}
		&\bold{Y}_{k+1} - \bar{\bold{F}}_{k+1} \\
		=&((I_N - J) \otimes I_n)((W^t \otimes I_n)\bold{Y}_k+ F(\bold{X}_{k+1}) - F(\bold{X}_{k}))\\
		=&((W^t - J) \otimes I_n)\bold{Y}_k+((I_N - J) \otimes I_n)(F(\bold{X}_{k+1}) - F(\bold{X}_{k})),
		\end{align*}where the last equality follows by the doubly stochastic of $W^t$ from Assumption \ref{assu-1} (2) and Remark \ref{r-1}. Then by using Lemma \ref{gt}, we have \begin{align*}
		&\bold{Y}_{k+1} - \bar{\bold{F}}_{k+1} \\
		=&(W^t \otimes I_n)\bold{Y}_k-\bar {\bold{F}}_k+((I_N - J) \otimes I_n)(F(\bold{X}_{k+1}) - F(\bold{X}_{k})).
		\end{align*}
		
		Take the norm of the equation above, it follows that
		\begin{equation}\label{1-6}
		\begin{aligned}
		\|\bold{Y}_{k+1} - \bar{\bold{F}}_{k+1} \|_F\leq &\|(W^t \otimes I_n)\bold{Y}_k-\bar {\bold{F}}_k\|_F\\
		+&\|((I_N - J) \otimes I_n)(F(\bold{X}_{k+1}) - F(\bold{X}_{k}))\|_F.
		\end{aligned}
		\end{equation}
		Recalling the fact that $\sigma_2$ is the spectral norm of $W - J$ stated in Remark \ref{r-1}, we have\begin{equation}\label{1-7}
		\begin{aligned}
		\|(W^t \otimes I_n)\bold{Y}_k-\bar {\bold{F}}_k\|_F = &\|((W^t - J) \otimes I_n) (\bold{Y}_k - \bar {\bold{F}}_k)\|_F \\
		\leq& \|W^t - J\|_F\| (\bold{Y}_k - \bar {\bold{F}}_k)\|_F \\
		\leq& \sigma_2^t\|\bold{Y}_k - \bar{\bold{F}}_{k}\|_F.
		\end{aligned}
		\end{equation}
		Substituting \eqref{1-7} to \eqref{1-6} yields
		\begin{equation}\label{1-5}
		\|\bold{Y}_{k+1} - \bar{\bold{F}}_{k+1} \|_F\leq \sigma_2^t\|\bold{Y}_k - \bar{\bold{F}}_{k}\|_F+\|F(\bold{X}_{k+1}) - F(\bold{X}_{k})\|_F.
		\end{equation}
		Applying the iteration Lemma \ref{al-5} to \eqref{1-5}, we finally prove \eqref{l-2}.
		
		(ii) We begin with presenting the one step improve of consensus. If $\bold{X}_k \in \mathcal{S}$, using the definition of IAM in \eqref{iam}, we have$$\|\bold{X}_{k+1} - \bold{\hat X}_{k+1}\|_F^2 \leq \|\bold{X}_{k+1} - \bold{\hat X}_{k}\|_F^2.$$
	 Since $X_{i,k} \in \text{St}(n,r)$ and $-\alpha \text{grad}\ h_{i,t}(\bold{X}_k)-\beta v_{i,k} \in T_{X_{i,k}}\mathcal{M}$, by using \eqref{re-u} and the property of retraction in Lemma \ref{lem-1}, we get
		\begin{align*}
		\|\bold{X}_{k+1} - \bold{\hat X}_{k+1}\|_F^2= &\sum_{i=1}^N\|\mathcal{R}_{X_{i,k}}(-\alpha \text{grad}\ h_{i,t}(\bold{X}_k)-\beta v_{i,k})-\hat X_k\|_F^2\\
		\leq& \sum_{i=1}^N\|X_{i,k}-\alpha \text{grad}\ h_{i,t}(\bold{X}_k)-\beta v_{i,k}-\hat X_k\|_F^2\\
		= &\|\bold{X}_{k}-\bold{\hat X}_k-\alpha \text{grad}\ h_t(\bold{X}_k)-\beta \bold{v}_{k}\|_F^2.
		\end{align*}
	
		Since $v_{i,k} = \mathcal{P}_{T_{X_{i,k}} \mathcal{M}} Y_{i,k}$, by the nonexpansiveness of the orthogonal projection, we obtain $\|\bold{v}_k\|_F \leq \|\bold{Y}_k\|_F$, which implies
		\begin{equation}\label{1-8}
		\begin{aligned}
		\|\bold{X}_{k+1} - \bold{\hat X}_{k+1}\|_F \leq&\|\bold{X}_{k}-\bold{\hat X}_k-\alpha \text{grad}\ h_t(\bold{X}_k)-\beta\bold{v}_{k}\|_F\\
		\leq &\|\bold{X}_{k}-\bold{\hat X}_k-\alpha \text{grad}\ h_t(\bold{X}_k)\|_F+\beta\|\bold{v}_{k}\|_F\\
		\leq &\|\bold{X}_{k}-\bold{\hat X}_k-\alpha \text{grad}\ h_t(\bold{X}_k)\|_F+\beta\|\bold{Y}_{k}\|_F.
		\end{aligned}
		\end{equation}
		Suppose $\bold{X}_k \in \mathcal{S}$, by Lemma \ref{al-6}, we have  \begin{align*}
		&\|\bold{X}_{k}-\bold{\hat X}_k-\alpha \text{grad}\ h_t(\bold{X}_k)\|_F^2\\
		=&\|\bold{X}_{k}-\bold{\hat X}_k\|_F^2+\|\alpha \text{grad}\ h_t(\bold{X}_k)\|_F^2-2\alpha \langle \bold{X}_{k}-\bold{\hat X}_k,\text{grad}\ h_t(\bold{X}_k) \rangle\\
		\leq & \|\bold{X}_{k}-\bold{\hat X}_k\|_F^2+\left(\alpha^2 - \frac{\alpha \Phi}{L_t}\right)\|\text{grad}\ h_t(\bold{X}_k) \|_F^2.
		\end{align*}
		By utilizing \eqref{9-2} in Lemma \ref{al-3}, it follows that
		\begin{equation*}\label{1-9}
		\begin{aligned}
		\|\bold{X}_{k}-\bold{\hat X}_k-\alpha \text{grad}\ h_t(\bold{X}_k)\|_F^2\leq  (1-(\alpha L_t \Phi - \alpha^2 L_t^2)) \|\bold{X}_{k}-\bold{\hat X}_k\|_F^2.
		\end{aligned}
		\end{equation*}
		Let $\rho_t^2:=1-(\alpha L_t \Phi - \alpha^2 L_t^2)$, and substituting the above result into \eqref{1-8} implies \eqref{l-3}.
		Notice that $\rho_t \leq 1$ since $\alpha \leq \frac{\Phi}{L_t}$.
		
		By applying Lemma \ref{al-5} to \eqref{l-3}, we finally prove \eqref{l-4}.
		
		\rightline{$\qedsymbol$}
\section{Proof of Lemma \ref{lem-3}}\label{app_2}
	We prove this by induction. When $k=0$, by Assumption \ref{assu-3} and Remark \ref{re-4}, and using \eqref{v-s}, one has $\|Y_{i,0}\|_F = \|F_i(X_{i,0})\| \leq A$. Let $\bar F_{-1} = \bar Y_0$. Then for each $ i \in \mathcal{N}$,
	$$\|Y_{i,0} - \bar F_{-1}\|_F \leq \|Y_{i,0}\|_F+\|\bar F_{-1}\|_F \leq 2A,$$ while $\bold{X}_0 \in \mathcal{S}$ by the initial selection.
	
	Suppose there exists a $k_0$, that for every $0 \leq k \leq  k_0$, we have $\|Y_{i,k}\|_F\leq 10A+L_G$ and $\|Y_{i,k} - \bar F_{k-1}\|_F \leq 10A+L_G$. It also satisfies that $\bold{X}_{k} \in \mathcal{S}$.
	
	Next, we present the analysis when $k=k_0+1$. We first prove that $\bold{X}_{k_0+1} \in \mathcal{S}$. The result is induced by a similar argument in the proof of \cite[Lem. 4.1]{pmlr-v139-chen21g}.   
	Based on the induction hypothesis $\bold{X}_{k_0} \in \mathcal{S}$, together with \eqref{l-3} and $\beta \leq \frac{1-\rho_t}{10A+L_G}\delta_1$, we have
	\begin{equation}
	\begin{aligned}
	\|\bold{X}_{k_0+1} - \bold{\hat X}_{k_0+1}\|_F \leq&\rho_t\|\bold{X}_{k_0}-\bold{\hat X}_{k_0}\|_F+\beta \|\bold Y_{k_0}\|_F\\
	\leq & \sqrt{N}\delta_1,
	\end{aligned}
	\end{equation}
	which implies $\bold{X}_{k_0+1} \in \mathcal{S}_1$.
	
	Since $\|v_{i,k_0}\|_F \leq \|Y_{i,k_0}\|_F \leq 10A +L_G$ and $\beta \leq \frac{\alpha \delta_1}{5(10A+L_G)}$, we derive \begin{equation}\label{a-2}
	\begin{aligned}
	&\|X_{i,k_0+1}-\hat{X}_{k_0+1}\|_F\\
	\leq &\|X_{i,k)+1}-\hat{X}_{k_0}\|_F+\|\hat X_{k_0} - \hat{X}_{k_0+1}\|_F\\
	\leq &\|X_{i,k_0}-\alpha \text{grad}h^t_i(\bold{X}_{k_0})-\beta v_{i,k_0}-\hat{X}_{k_0}\|_F+\|\hat X_{k_0} - \hat{X}_{k_0+1}\|_F\\
	\leq & \|X_{i,k_0}-\alpha \text{grad}h^t_i(\bold{X}_{k_0})-\hat{X}_{k_0}\|_F+\frac{\alpha \delta_1}{5}+\|\hat X_{k_0} - \hat{X}_{k_0+1}\|_F
	\end{aligned}
	\end{equation}
	For the last term, since the derivation of \eqref{3-18} only uses Lemma \ref{lem-1} and Lemma \ref{al-3}, by \eqref{3-18} and \eqref{al-2}, we get
	\begin{equation}
	\begin{aligned}
	\|\hat X_{k_0+1}-\hat X_{k_0}\|_F\leq &\frac{2}{\bar \sigma_{r,k_0+1}+\bar \sigma_{r,k_0}}(\frac{2M\alpha^2L_t^2+\alpha L_t}{N}\|\bold{X}_{k_0}-\bold{\hat X}_{k_0}\|_F^2\\
	+&\frac{2M\beta^2}{N}\|\bold{v}_{k_0}\|_F^2+\beta\|\bar v_{k_0}\|_F)\\
	\leq & \frac{2}{\bar \sigma_{r,k_0+1}+\bar \sigma_{r,k_0}}[(2M\alpha^2L_t^2+\alpha L_t)\delta_1\\
	+&\beta (10A+L_G)+2M\beta^2(10A+L_G)^2].
	\end{aligned}
	\end{equation}
	where the last inequality follows by $\| v_{i,k_0}\|_F \leq \|Y_{i,k_0}\|_F \leq 10A +L_G$.
	And since $L_t\leq 2,\ \alpha \leq 1,\ \beta \leq \frac{\alpha \delta_1}{5(10A+L_G)}$, we have \begin{equation}\label{a-1}
	\begin{aligned}
	&\|\hat X_{k_0+1}-\hat X_{k_0}\|_F\\
	\leq &\frac{2}{\bar \sigma_{r,k_0+1}
		+\bar \sigma_{r,k_0}}(\frac{252}{25}\alpha \delta_1^2+\frac{\alpha \delta_1}{5})\\
	\leq & \frac{2}{\bar \sigma_{r,k_0+1}+\bar \sigma_{r,k_0}}(\frac{252}{625r}\alpha \delta_2^2+\frac{\alpha \delta_2}{25 \sqrt{r}}),
	\end{aligned}
	\end{equation}
	where the last inequality is derived from $\delta_1 \leq \frac{1}{5\sqrt{r}}\delta_2$.
	
	According to \eqref{1}, we get $$\begin{aligned}
	\text{grad}h_{i,t}(\bold{X}) = &X_i-\sum_{j=1}^N W_{ij}^t X_j -\frac{1}{2}X_i\sum_{j=1}^N W_{ij}^t(X_i-X_j)^\top(X_i-X_j),
	\end{aligned}$$
	and then we derive
	\begin{align}\label{a-3}
	&\|X_{i,k_0}-\alpha \text{grad}h_{t,i}(\bold{X}_{k_0})-\hat X_{k_0}\|_F \notag\\
	=&\|(1-\alpha)(X_{i,k_0}-\hat{X}_{k_0})+\alpha (\Bar{X}_{k_0}- \hat{X}_{k_0})+\alpha \sum_{j=1}^N W_{ij}^t(X_{j,k_0}-\bar X_{k_0})\notag\\
	+&\frac{\alpha}{2}X_{i,k_0}\sum_{j=1}^N W_{ij}^t(X_{i,k_0}-X_{j,k_0})^\top(X_{i,k_0}-X_{j,k_0})\|_F\notag\\
	\leq & (1-\alpha)\delta_2+\alpha \|\Bar{X}_{k_0}- \hat{X}_{k_0}\|_F+\alpha\|\sum_{j=1}^N (W_{ij}^t-\frac{1}{N})X_{j,k_0}\|_F\\
	+&\frac{1}{2}\|\alpha \sum_{j=1}^N W_{ij}^t(X_{i,k_0}-X_{j,k_0})^\top(X_{i,k_0}-X_{j,k_0})\|_F\notag\\
	\leq&(1-\alpha)\delta_2+2\alpha \delta_1^2 \sqrt{r}+\alpha\|\sum_{j=1}^N (W_{ij}^t-\frac{1}{N})X_{j,k_0}\|_F+2\alpha \delta_2^2\notag\\
	\leq & (1-\frac{\alpha}{2})\delta_2+2\alpha \delta_1^2 \sqrt{r}+2\alpha \delta_2^2.\notag
	\end{align}

	Substituting \eqref{a-1} and \eqref{a-3} into \eqref{a-2} we have
	\begin{align}
	&\|X_{i,k_0+1}-\hat{X}_{k_0+1}\|_F\\
	\leq &(1-\frac{\alpha}{2})\delta_2+2\alpha \delta_1^2 \sqrt{r}+2\alpha \delta_2^2+\frac{\alpha \delta_1}{5}\\
	+&\frac{2}{\bar \sigma_{r,k_0+1}+\bar \sigma_{r,k_0}}(\frac{252}{625r}\alpha \delta_2^2+\frac{\alpha \delta_2}{25 \sqrt{r}}).
	\end{align}
	Since $\bold{X}_{k_0} \in \mathcal{S}_1$ and $\bold{X}_{k_0+1} \in \mathcal{S}_1$, we have $$\sigma_r(\Bar{X}_{k_0}) \geq 1-2\frac{\|\bold{X}_{k_0} - \hat{\bold{X}}_{k_0}\|_F^2}{N}\geq 1-2\delta_1^2>0$$  and $\bar \sigma_{r,k_0+1}\geq 1-2\delta_1^2$, which imply that \begin{equation}\label{a-4}
	\begin{aligned}
	&\|X_{i,k_0+1}-\hat{X}_{k_0+1}\|_F\\   
	\leq &(1-\frac{\alpha}{2})\delta_2+2\alpha \delta_1^2 \sqrt{r}+2\alpha \delta_2^2+\frac{\alpha \delta_1}{5}\\
	+&\frac{1}{1-2\delta_1^2}(\frac{252}{625r}\alpha \delta_2^2+\frac{\alpha \delta_2}{25 \sqrt{r}}).
	\end{aligned}
	\end{equation}
	substituting the conditions of $\delta_1,\ \delta_2$ into \eqref{a-4} we have $$\|X_{i,k_0+1}-\hat{X}_{k_0+1}\|_F \leq \delta_2.$$Therefore, we prove that $\bold{X}_{k_0+1}\in \mathcal{S}_2$.
	
	In summary, we have shown that $X_{k_0+1} \in \mathcal{S}_1$ and $X_{k_0+1} \in  \mathcal{S}_2$. According to Definition \ref{l-r} that $\mathcal{S} = \mathcal{S}_1 \bigcap \mathcal{S}_2 $, we then get $\bold{X}_{k_0+1}\in \mathcal{S}$.
	
	Then, by \eqref{g-t} and since $W^t$ is doubly stochastic according to Remark \ref{r-1}, we have \begin{align*}
	&\|Y_{i,k_0+1} - \bar F_{k_0}\|_F \\
	=& \|\sum_{j=1}^N W_{ij}^t Y_{j,k_0}-\bar F_{k_0}+F_i(X_{i,k_0+1}) - F_i(X_{i,k_0})\|_F\\
	\leq & \|\sum_{j=1}^N \left(W_{ij}^t - \frac{1}{N}\right)(Y_{j,k_0}-\bar F_{k_0-1})\|_F+\|F_i(X_{i,k_0+1}) - F_i(X_{i,k_0})\|_F\\
	\leq & \sigma_2^t\sqrt{N}\|Y_{j,k_0}-\bar F_{k_0-1}\|_F+\|F_i(X_{i,k_0+1}) - F_i(X_{i,k_0})\|_F.
	\end{align*}
	By the induction hypothesis that $\|Y_{i,k} - \bar F_{k-1}\|_F \leq 10A+L_G$ for any $k\leq k_0$, it follows that
	\begin{align}	\label{2-5}
	&\|Y_{i,k_0+1} - \bar F_{k_0}\|_F \notag\\
	\leq& \sigma_2^t \sqrt{N}(10A+L_G)+\|\text{grad}f(X_{i,k_0+1}) - \text{grad}f(X_{i,k_0})\|_F\\
	+&\|F_i(X_{i,k_0+1}) - \text{grad} f_i(X_{i,k_0+1})\|_F+\|F_i(X_{i,k_0}) - \text{grad} f_i(X_{i,k_0})\|_F.\notag
	\end{align}
	
	By recalling that $\max_{X \in \text{St}(n,r)}\|\text{grad} f_i(X)\|_F \leq A$ stated in Remark \ref{re-4}, we derive \begin{equation}\label{2-6}
	\|F_i(X_i) - \text{grad} f_i(X_i)\|_F \leq \|F_i(X_i) \|_F+\|\text{grad} f_i(X_i)\|_F\leq 2A.
	\end{equation}
	Then by using Lemma \ref{al-4}, and substituting \eqref{2-6} into \eqref{2-5}, we further have$$\|Y_{i,k_0+1} - \bar F_{k_0}\|_F\leq\sigma_2^t \sqrt{N}(10A+L_G)+L_G\|X_{i,k_0+1} - X_{i,k_0}\|_F+4A.$$Moreover, by using $\sigma_2^t \leq \frac{1}{2\sqrt{N}}$ since $t \geq [\log_{\sigma_2}(\frac{1}{2\sqrt{N}})]$, we get \begin{equation}\label{2-3}
	\|Y_{i,k_0+1} - \bar F_{k_0}\|_F\leq\frac{1}{2}(10A+L_G)+4A+L_G\|X_{i,k_0+1} - X_{i,k_0}\|_F.
	\end{equation}
	
	For the last term of the right-hand side, the update of $X_{i,k+1}$ in \eqref{re-u} implies \begin{equation}\label{2-7}
	\begin{aligned}
	\|X_{i,k_0+1} - X_{i,k_0}\|_F =&\|\mathcal{R}_{X_{i,k_0}}(-\alpha \text{grad}\ h_{i,t}(\bold{X}_{k_0})-\beta v_{i,k_0}) - X_{i,k_0}\|_F\\
	\leq& \alpha \|\text{grad} h_{i,t}(\bold{X}_{k_0})\|_F+\beta \|Y_{i,k_0}\|_F.
	\end{aligned}
	\end{equation} where the inequality follows by Lemma \ref{lem-1} and $\|v_{i,k_0}\|_F \leq \|Y_{i,k_0}\|_F$.
	
	Since $\bold{X}_{k_0+1} \in \mathcal{S}$, the term $\|\text{grad} h_{i,t}(\bold{X}_{k_0})\|_F$ is bounded by Lemma \ref{al-3}, which implies $
	\alpha\|\text{grad} h_{i,t}(\bold{X}_{k_0})\|_F \leq 2\delta_2 \alpha.$
	As $\beta \leq\frac{\alpha \delta_1}{5(10A+L_G)}$, and by using the induction hypothesis that $\|Y_{i,k}\|_F \leq 10A+L_G$, we have $
	\beta\|Y_{i,k_0}\|_F \leq \frac{1}{5}\delta_1 \alpha.$
	Substituting these results into \eqref{2-7} implies that
	\[\|X_{i,k_0+1} - X_{i,k_0}\|_F \leq 2\delta_2 \alpha+ \frac{1}{5}\delta_1 \alpha.\]
	This together with \eqref{2-3} implies $$\|Y_{i,k_0+1} - \bar F_{k_0}\|_F\leq 9A+\frac{1}{2}L_G+ 2\delta_2 L_G +\frac{L_G}{5}\delta_1 \alpha \leq 9A+L_G.$$
	
	According to Assumption \ref{assu-3}, we have $\|\bar F_k\|_F \leq A, \forall k \geq 0$, which yields $$\|Y_{i,k_0+1}\|_F \leq \|Y_{i,k_0+1} - \bar F_{k_0}\|_F+\|\bar F_{k_0}\|_F \leq 10A+L_G.$$
	
	Therefore, the proof is completed.
	
	\rightline{$\qedsymbol$}
\end{document}